\documentclass[reqno,centertags,12pt]{amsart}
\usepackage[letterpaper,margin=1.3in]{geometry}
\usepackage{amsmath,amsthm,amsfonts,amssymb,enumerate}
\usepackage[bookmarksopen=false,final,hidelinks]{hyperref}

\newcommand{\bb}{\mathbb}
\newcommand{\ol}{\overline}
\newcommand{\ul}{\underline}
\newcommand{\bs}{\backslash}
\newcommand{\supp}{\mathrm{supp}}
\newcommand{\cvh}{\mathrm{cvh}}
\newcommand{\PW}{\mathcal{PW}}
\newcommand{\BC}{\mathcal{BC}}
\newcommand{\ca}{\mathrm{Cap}}
\newcommand{\dist}{\operatorname{dist}}
\newcommand{\eps}{\varepsilon}

\newtheorem{theorem}{Theorem}
\newtheorem{proposition}[theorem]{Proposition}
\newtheorem{lemma}[theorem]{Lemma}
\newtheorem{corollary}[theorem]{Corollary}
\theoremstyle{definition}
\newtheorem{example}[theorem]{Example}
\numberwithin{equation}{section}
\numberwithin{theorem}{section}

\begin{document}

\title{A four-way Szeg\H{o} theorem for $L^p$ extremal polynomials on subsets of $\bb R$}
	
\author{G\"{o}kalp Alpan}
\address{Faculty of Engineering and Natural Sciences, Sabanc\i\ Univ., \.Istanbul, T\"urkiye}
\email{gokalp.alpan@sabanciuniv.edu}
\thanks{\footnotesize G.A. and M.Z. are supported by the Scientific and Technological Research Council of T\"urkiye (T\"UB\.ITAK) ARDEB 1001 Grant Number 123F358.}
	
\author{Maxim Zinchenko}
\address{Department of Mathematics and Statistics\\ University of New Mexico\\ Albuquerque, NM 87131, USA}
\email{maxim@math.unm.edu}
\thanks{\footnotesize M.Z. is supported in part by the Simons Foundation Grant MP-TSM-00002651.}
	
\subjclass[2020]{Primary 41A17; Secondary 41A50, 42C05, 41A44}
\keywords{Widom factors, Chebyshev polynomials, orthogonal polynomials, Parreau--Widom sets, Blaschke condition, Szeg\H{o} condition, Szeg\H{o} theorem.}
	
\begin{abstract}
We prove a four-way Szeg\H{o} theorem for $L^p$ extremal polynomials on
compact supports $K=K_0\cup X\subset\bb R$, where $K_0$ is a regular compact set and $X$ is a finite or countable set of isolated points.  For every $2\le p\le\infty$, including the weighted Chebyshev case $p=\infty$ under the corresponding assumptions on the weight, any three of the
Parreau--Widom condition for $K_0$, the Blaschke condition for $X$, the
Szeg\H{o} condition for the weight, and the Widom condition $0<\limsup_{n\rightarrow\infty}W_{p,n}<\infty$ imply the fourth.
As a consequence, for every regular compact set $K\subset\bb R$ of positive capacity, the Parreau--Widom condition is equivalent both to boundedness of the unweighted Chebyshev Widom factors and to boundedness of the
equilibrium-measure $L^2$ Widom factors.
For every $0<p\le\infty$, we also prove upper and lower bounds for the Widom factors in which the contributions of the weight, the isolated points, and the gaps of $K_0$ appear separately.
Finally, we give examples illustrating the sharpness of our results.  For $p=2$, we realize every combination of the following five properties that is not excluded by the implications proved in this work: the Parreau--Widom condition, the Blaschke condition, the Szeg\H{o} condition, boundedness of the Widom factors from above, and boundedness of the Widom factors away from zero.
\end{abstract}
	
\maketitle

\section{Introduction}

Let $K\subset\bb R$ be a non-polar compact set.  We denote its logarithmic
capacity by $\ca(K)$ and its equilibrium measure by $\mu_K$.  The Green
function of $\ol{\bb C}\bs K$ with pole at infinity is
\begin{equation}\label{GrFn}
    g_K(z)=-\log\ca(K)+\int_K\log|z-\zeta|\,d\mu_K(\zeta).
\end{equation}
Thus $g_K(z)=\log|z|-\log\ca(K)+o(1)$ as $z\rightarrow\infty$.  For
$x\in K$, we set
$g_K(x)=\limsup_{\bb C\bs K\ni z\rightarrow x}g_K(z)$.  A point $x$ is
regular when $g_K(x)=0$, and $K$ is regular when all of its points are
regular.  If $K_{\rm reg}$ and $K_{\rm ir}$ denote the regular and irregular
points, respectively, then $K$ is called semi-regular when $K_{\rm reg}$ is
closed.  Kellogg's theorem says that $K_{\rm ir}$ is polar
\cite[Theorem~4.2.5]{Ran95}.  In the semi-regular case, $K_{\rm reg}$ is a
regular compact set and
$\ca(K)=\ca(K_{\rm reg})$, $g_K=g_{K_{\rm reg}}$
\cite[Section~2.1, p.~5]{DukZin26}.  The equality
$\mu_K=\mu_{K_{\rm reg}}$ then follows from \eqref{GrFn}.

Let $\mu$ be a finite positive Borel measure with non-polar compact support
$K=\supp(d\mu)\subset\bb R$.  In particular, $K$ is infinite.  For
$0<p<\infty$, put
\begin{equation*}
    t_{p,n}(K,\mu)=
    \inf_{\substack{P\ {\rm monic}\\ \deg P=n}}
    \left(\int_K |P|^p\,d\mu\right)^{1/p},
    \qquad
    W_{p,n}(K,\mu)=\frac{t_{p,n}(K,\mu)}{\ca(K)^n}.
\end{equation*}
Similarly, let $K\subset\bb R$ be a non-polar compact set and let
$u:K\rightarrow[0,\infty)$ be a bounded weight satisfying
$\supp(u):=\ol{\{x\in K:u(x)>0\}}=K$.  We write $\|f\|_K=\sup_{x\in K}|f(x)|$ and set
\begin{equation*}
    t_n(K,u)=
    \inf_{\substack{P\ {\rm monic}\\ \deg P=n}}\|uP\|_K,
    \qquad
    W_{\infty,n}(K,u)=\frac{t_n(K,u)}{\ca(K)^n}.
\end{equation*}
For $p=2$, the extremal polynomials are the monic orthogonal polynomials.  For $p=\infty$, they are the weighted Chebyshev polynomials.  The quantities
$W_{p,n}$ are called the Widom factors.  The capacity normalization removes the common exponential growth, since the Fekete--Szeg\H{o} theorem gives
$t_n(K,1)^{1/n}\rightarrow\ca(K)$ \cite[Corollary~5.5.5]{Ran95}.
The question is then whether the normalized norms stay bounded above, stay away from zero, or have a more singular asymptotic behavior.

For $u\ge0$ in $L^1(\mu_K)$, define
$S(K,u)=\exp(\int_K\log u\,d\mu_K)$, with $S(K,u)=0$ when the integral is
$-\infty$.  The condition $S(K,u)>0$ is the Szeg\H{o} condition.  It measures
the size of the density relative to equilibrium measure, which is the natural
reference measure for the capacity normalization.  On one interval, after an
affine change of variables, if $d\mu=w\,d\mu_K+d\mu_s$,
$d\mu_s\perp d\mu_K$, and $S(K,w)>0$, then Szeg\H{o}'s theorem gives
$\lim_{n\rightarrow\infty}W_{2,n}(K,\mu)^2=2S(K,w)$
\cite[Theorem~13.8.8]{Sim05}.  Peherstorfer and Yuditskii additionally allowed
a denumerable set $Y$ of exterior mass points satisfying
$\sum_{y\in Y}g_K(y)<\infty$ \cite[Theorem, p.~3215]{PehYud01}.  For such a
measure, supported on $K\cup Y$, their formula contains the additional factor
$\exp(\sum_{y\in Y}g_K(y))$ in the limit of $W_{2,n}(K\cup Y,\mu)$.

Strong asymptotics for weighted $L^p$ extremal polynomials on $[-1,1]$,
$1<p<\infty$, were obtained under assumptions on the weight stronger than the
Szeg\H{o} condition \cite[Theorem~1.1]{LS87}.  For $0<p<\infty$, Kaliaguine
proved strong asymptotics for Szeg\H{o}-class measures on sufficiently smooth
closed curves, allowing a singular part on the curve and finitely many exterior mass points \cite[Theorem~2.2]{Kal93}.
For $p=2$, the analogous result for absolutely continuous
Szeg\H{o}-class measures on $C^{2+}$ arcs with finitely many exterior
mass points was proved in \cite[Theorem~1]{Kal95}.

Throughout the paper, unless explicitly stated otherwise, we assume that
\begin{equation}\label{support-main-assumptions}
\begin{aligned}
&K=K_0\cup X\subset\bb R\text{ is compact},\\
&K_0\text{ is a regular compact set with }\ca(K_0)>0,\\
&X\subset\bb R\bs K_0\text{ is at most countable and every point of }X \text{ is isolated in }K.
\end{aligned}
\end{equation}
Fix an enumeration $X=\{x_j\}$ without repetition when
$X\ne\varnothing$.  Whenever a sum involves the points $x_j$ or the corresponding masses $a_j$, it is taken over this enumeration and is understood to be zero when $X=\varnothing$.

Since $X$ is at most countable, it is polar
\cite[Corollary~3.2.5]{Ran95}.  Thus $K$ is semi-regular,
$K_{\rm reg}=K_0$, $K_{\rm ir}=X$, and
$\ca(K)=\ca(K_0)$, $g_K=g_{K_0}$
\cite[Section~2.1, p.~5]{DukZin26}.  The equality $\mu_K=\mu_{K_0}$ follows from \eqref{GrFn}, and
$g_K(x_j)>0$ for every $x_j\in X$.

Let $(\alpha_\ell,\beta_\ell)$ be the bounded gaps of $K_0$.  On each gap,
\eqref{GrFn} gives
$g_K''(x)=-\int_K(x-\zeta)^{-2}\,d\mu_K(\zeta)<0$, so $g_K$ is strictly
concave.  Since $K_0$ is regular, $g_K$ tends to zero at both endpoints and is
positive in the gap.  It therefore has a unique critical point $c_\ell$, where
it reaches its maximum.  See also \cite[Section~2.1]{Chr12}.  We use the two sums
\begin{equation}\label{PW-BC-def}
    \PW(K)=\sum_\ell g_K(c_\ell)=\PW(K_0),
    \qquad
    \BC(K)=\sum_{j} g_K(x_j).
\end{equation}
The regular part $K_0$ is Parreau--Widom exactly when $\PW(K)<\infty$, and the
isolated points satisfy the Blaschke condition exactly when $\BC(K)<\infty$.
In \eqref{PW-BC-def}, $\PW(K)$ records the total Green height of the gaps, while
$\BC(K)$ records the total Green size of the isolated points.  In the terminology of
\cite[(2.4), p.~5]{DukZin26}, $\BC(K)$ is the irregularity coefficient of $K$ with
pole at infinity.

Assuming \eqref{support-main-assumptions}, for $0<p<\infty$ we consider
finite positive Borel measures with $\supp(d\mu)=K$ of the form
\begin{equation}\label{mu-main-decomp}
    d\mu=w\,d\mu_K+d\sigma+\sum_{j}a_j\,d\delta_{x_j},
    \qquad
    d\sigma\perp d\mu_K,\quad
    a_j>0,\qquad
    \sum_j a_j<\infty,
\end{equation}
where $d\sigma$ is supported on $K_0$ and is otherwise arbitrary.  Let
$\mu_{\rm ac}$ be the part of $\mu|_{K_0}$ absolutely continuous with respect
to $\mu_K$, and fix a nonnegative Borel representative
$w=\frac{d\mu_{\rm ac}}{d\mu_K}$.  Thus, for finite $p$, the density $w$ is taken with respect to equilibrium measure, not Lebesgue measure.  For $p=\infty$, the symbol $w$ instead denotes a given bounded weight on $K$ with $\supp(w)=K$.  In both cases, $S(K,w)=\exp(\int_{K_0}\log w\,d\mu_K)$.  For notational uniformity, when $p=\infty$ we write
\begin{equation*}
    W_{\infty,n}(K,\mu):=W_{\infty,n}(K,w),
\end{equation*}
where the notation on the left is shorthand and does not involve a measure.
Finally, let $S_p(K,w)=S(K,w)^{1/p}$ for $0<p<\infty$ and
$S_p(K,w)=S(K,w)$ for $p=\infty$.

Put
\begin{equation*}
    \ul W_p(K,\mu)=\liminf_{n\rightarrow\infty}W_{p,n}(K,\mu),
    \qquad
    \ol W_p(K,\mu)=\limsup_{n\rightarrow\infty}W_{p,n}(K,\mu).
\end{equation*}
We use the following conditions.  Their relations are studied systematically
in Section~\ref{sec-szego-theorems}:
\begin{equation}\label{intro-four-properties}
\begin{array}{cl@{\qquad}cl}
    (P):&K_0\text{ is Parreau--Widom},
    &(B):&\BC(K)<\infty,\\
    (S):&S(K,w)>0,
    &(U_p):&\ol W_p(K,\mu)<\infty,\\
    (L_p):&\ul W_p(K,\mu)>0,
    &(W_p):&0<\ol W_p(K,\mu)<\infty.
\end{array}
\end{equation}
Thus $(U_p)+(L_p)$ implies $(W_p)$, whereas $(W_p)$ does not in general imply
$(L_p)$.

For the unweighted Chebyshev problem on a finite-gap set $K\subset\bb R$, the
factors $W_{\infty,n}(K,1)$ are asymptotic to an almost periodic sequence
\cite[Theorem~1.8 and Remark~3 following Theorem~1.6]{CSZ17I}, and need not
converge even when $K$ consists of two intervals \cite[p.~128]{Wid69}.  The
unweighted Chebyshev polynomials on every finite-gap set also have
Szeg\H{o}--Widom asymptotics \cite[Theorem~1.9]{CSZ17I}.  Beyond the finite-gap
setting, strong Szeg\H{o}--Widom asymptotics for the unweighted Chebyshev
polynomials on regular Parreau--Widom sets satisfying the direct Cauchy theorem
condition were proved in \cite[Theorem~1.3]{CSYZ19}.

For measures of the form above whose regular part $K_0$ is finite-gap, any two
among $(S)$, $(B)$, and the two-sided $L^2$ Widom condition
$(U_2)+(L_2)$ imply the third \cite[Theorem~4.1]{CSZ11}.  When $K_0$ is a
regular Parreau--Widom set, Christiansen proved that, under $(B)$, condition
$(S)$ is equivalent to $(W_2)$, and either condition yields $(U_2)$ and $(L_2)$
\cite[Theorem~2 and the following corollary]{Chr12}.  His result already allows a finite or denumerable exterior point spectrum.
Strong asymptotics for orthogonal polynomials on homogeneous sets with
exterior masses, under an additional condition on the poles and zeros of
the associated Stieltjes function, were proved in
\cite[Theorem in Section~6, pp.~144--145]{PehYud03}.

The quantitative estimates closest to ours come from several parts of the
literature.  For $0<p<\infty$ and without isolated points,
\cite[Theorem~2.1]{AZ20a} gives the fixed-degree bound
$W_{p,n}(K,\nu)\ge S(K,\rho)^{1/p}$ for
$d\nu=\rho\,d\mu_K+d\nu_s$, $d\nu_s\perp d\mu_K$.  Its sharpness was proved
in \cite[Theorem~2.2]{AZ20a}, and sharpness on a fixed real compact set was
proved in \cite[Theorem~2.2]{AZ20b}.  On regular Parreau--Widom sets, the
weighted Chebyshev upper bound and the corresponding Szeg\H{o} theorem were
proved in \cite[Theorems~3.2 and 3.3]{CSZ26}.  For the unweighted problem on the
semi-regular supports considered here, Dukes and Zinchenko proved
$W_{\infty,n}(K,1)\le 2e^{\PW(K)+\BC(K)}$ for every $n\ge1$
\cite[Theorem~3.3, (3.7), p.~13]{DukZin26} and
$\liminf_{n\rightarrow\infty}W_{\infty,n}(K,1)\ge2e^{\BC(K)}$
\cite[Corollary~4.5, (4.6), p.~17]{DukZin26}.  Under $(P)$, uniform boundedness of
$\{W_{\infty,n}(K,1)\}$ is equivalent to $(B)$
\cite[Theorem~5.1, p.~17]{DukZin26}.  When $K_0$ is an interval,
$W_{\infty,n}(K,1)\rightarrow2e^{\BC(K)}$
\cite[Theorem~5.4, (5.3), p.~18]{DukZin26}.

Theorem~\ref{main-estimates-thm} gives the following estimates for every
$0<p\le\infty$ and for countably many isolated points.  At finite $p$, the
singular part remains arbitrary.  For $p=\infty$, the weight is assumed to be
bounded on $K$ and to satisfy $\supp(w)=K$.  The upper estimate also requires
the weight to be upper semicontinuous.
\begin{align}
    \liminf_{n\rightarrow\infty}W_{p,n}(K,\mu)
    &\ge S_p(K,w)e^{\BC(K)}
    &&\text{if }(S)\text{ holds},
    \label{intro-main-lower}\\
    \limsup_{n\rightarrow\infty}W_{p,n}(K,\mu)
    &\le 2e^{\PW(K)+\BC(K)}S_p(K,w)
    &&\text{if }(P)\text{ and }(B)\text{ hold}.
    \label{intro-main-upper}
\end{align}
Here $S_p(K,w)$ records the contribution of the density/weight $w$, $e^{\BC(K)}$ records the contribution of the isolated points, and
$e^{\PW(K)}$ records the contribution of the gaps of the regular part.

Unlike the upper estimate \eqref{intro-main-upper}, the lower estimate
\eqref{intro-main-lower} needs no Parreau--Widom assumption.  If $(S)$
holds but $(B)$ fails, its right-hand side is $+\infty$, and hence the
Widom factors tend to infinity.  The remaining case
$S(K,w)=0$ and $\BC(K)=\infty$ lies outside the hypotheses of
Theorem~\ref{main-estimates-thm}, since the product
$S_p(K,w)e^{\BC(K)}$ then has the indeterminate form
$0\cdot\infty$.
This indeterminacy is genuine.  Already for $p=2$, the Widom factors can
tend to zero, remain identically one, tend to infinity, or have lower
limit zero and upper limit infinity.  These cases occur, respectively,
in the constructions for Corollary~\ref{discrete-patterns-cor}(a),
Example~\ref{jacobi-patterns-ex}(a) and (b), and Corollary~\ref{discrete-patterns-cor}(d).  Consequently, no single value assigned to $0\cdot\infty$ can make both \eqref{intro-main-lower} and \eqref{intro-main-upper} valid in general.

Theorem~\ref{szego-implications-thm} gives the direct and converse
implications of \eqref{intro-main-lower} and \eqref{intro-main-upper}.  The
lower estimate gives
$(S)\Rightarrow(L_p)$ and, if $(B)$ fails, $W_{p,n}(K,\mu)\rightarrow\infty$.
Consequently, $(S)+(U_p)\Rightarrow(B)$.  The upper estimate gives
$(P)+(B)\Rightarrow(U_p)$ and, if $(S)$ fails, $W_{p,n}(K,\mu)\rightarrow0$.
Consequently, $(P)+(B)+(L_p)\Rightarrow(S)$.
For $p=\infty$, the lower conclusions require the weight to be bounded on
$K$ and to satisfy $\supp(w)=K$, while the upper conclusions additionally
require the weight to be upper semicontinuous.

Under $(P)$, Corollary~\ref{3szego-thm} gives the following conclusions for
every $0<p\le\infty$: $(B)+(S)$ implies both $(U_p)$ and $(L_p)$,
$(B)+\neg(S)$ gives convergence to zero, and $(S)+\neg(B)$ gives divergence
to infinity.

For $2\le p\le\infty$, these implications close into a four-way Szeg\H{o}
theorem.  Corollary~\ref{four-way-thm} states that any three of $(P)$, $(B)$,
$(S)$, and $(W_p)$ imply the fourth.  The additional implication is supplied
by Theorem~\ref{unconditional-pw-thm}, which gives
$(S)+(U_p)\Rightarrow(P)+(B)$ and the quantitative estimate
\begin{equation*}
    \PW(K)
    \le 2\log\frac{\ol W_p(K,\mu)}{S_p(K,w)}-\log\frac4\pi.
\end{equation*}

As a consequence of the four-way Szeg\H{o} theorem,
Corollary~\ref{regular-pw-characterization-cor} gives, for every regular
compact set $K\subset\bb R$ of positive capacity,
\begin{equation*}
\begin{split}
    K\text{ is Parreau--Widom}
    &\Longleftrightarrow
    \sup_{n\ge1}W_{\infty,n}(K,1)<\infty\\
    &\Longleftrightarrow
    \sup_{n\ge1}W_{2,n}(K,\mu_K)<\infty.
\end{split}
\end{equation*}

This settles several questions in the literature and strengthens earlier partial converse results.
Christiansen, Simon, and Zinchenko asked whether the middle-thirds Cantor set, which is not Parreau--Widom, could nevertheless have bounded Chebyshev Widom factors \cite[Remark~3 following Theorem~1.4]{CSZ17I}.  Alpan and Goncharov asked more generally whether a regular non-Parreau--Widom compact subset of $\bb R$ can have bounded Chebyshev Widom factors, whether zero Lebesgue measure forces those factors to be unbounded, and the corresponding questions for the $L^2$ Widom factors of the equilibrium measure \cite[Problem~8]{AlpGon17Q}.  The Chebyshev question was posed again in \cite[Open Problem~2.2]{CSZ22}.  The characterization above answers these questions within the class of regular compact subsets of $\bb R$.

There were also partial converse results under additional hypotheses. Under the assumption that $K$ has a canonical generator, \cite[Theorem~1.4]{CSYZ19} proved that bounded Chebyshev Widom factors imply the Parreau--Widom condition.  The analogous implication for the $L^2$ Widom factors and the equivalence of the two boundedness conditions, under the corresponding character-density hypothesis, were obtained in \cite[Theorem~1.4 and Corollary~1.5]{Alp19}.  The characterization above removes these additional hypotheses for regular compact subsets of $\bb R$.

Section~4 examines the sharpness of the preceding implications.  Several
of its constructions are valid over broad ranges of $p$ and show that
natural converses and stronger conclusions fail without additional
assumptions.  For $p=2$, the analysis is complete: every sign pattern not
excluded by Theorems~\ref{szego-implications-thm} and
\ref{unconditional-pw-thm} is realized.

The elementary and discrete constructions account for eleven of the
seventeen sign patterns not excluded for $p=2$.  The remaining six are
realized by Examples~\ref{ex-B-L2-only}, \ref{ex-only-L2},
\ref{jacobi-patterns-ex}(a)--(b), and
\ref{zero-measure-fixed-ex}(a)--(b).  Consequently, for $p=2$, no
additional implication among $(P)$, $(B)$, $(S)$, $(U_2)$, and $(L_2)$
can hold in general.

The rest of the paper is organized as follows.  Section~2 establishes the
upper and lower bounds for $L^p$ Widom factors used in the sequel.
Section~3 proves the Szeg\H{o}-type implications and equivalences stated
above.  Section~4 develops the examples and the sign-pattern
classification summarized above.

\section{Upper and Lower Bounds for \texorpdfstring{$L^p$}{Lp} Widom Factors}

All inequalities below are understood in the extended-real sense.  In particular, $e^{+\infty}=+\infty$ and
$s\cdot(+\infty)=+\infty$ for $s>0$.  The hypotheses are chosen so that products of Szeg\H{o} factors with exponentials of Green sums are never of the indeterminate form $0\cdot\infty$.

The case $u\equiv1$ of the estimate below follows from the
\emph{Totik--Widom upper bound} of Dukes and Zinchenko
\cite[Theorem~3.1, (3.2), p.~11]{DukZin26}.  For general weights, we use an outer
finite-gap approximation and the non-asymptotic reciprocal-polynomial
estimate \cite[Theorem~2.8]{CSZ26}.

\begin{proposition}\label{CSZ-upper-input}
Let $K\subset\bb R$ be a non-polar semi-regular compact set, let $g_K$ be its Green function, and put $H(K)=\sum_G\sup_{x\in G}g_K(x)$, where $G$ ranges over the bounded components of $\cvh(K)\bs K$.  If $H(K)<\infty$ and $u:K\rightarrow[0,\infty)$ is bounded, upper semicontinuous, and satisfies $\supp(u)=K$, then
\begin{equation}\label{CSZ-upper-input-eq}
    \limsup_{n\rightarrow\infty}W_{\infty,n}(K,u)
    \le
    2S(K,u)\exp[H(K)].
\end{equation}
\end{proposition}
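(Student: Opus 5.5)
The plan is to reduce to finite-gap sets, where a non-asymptotic weighted bound is available, and then pass to a limit. The text before the statement names the tools: an outer finite-gap approximation and the reciprocal-polynomial estimate \cite[Theorem~2.8]{CSZ26}.

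\emph{Step 1 (reduction to the regular part).} Write $K=K_{\rm reg}\cup K_{\rm ir}$. Then $\ca(K)=\ca(K_{\rm reg})$, $g_K=g_{K_{\rm reg}}$, and $\mu_K=\mu_{K_{\rm reg}}$. In particular $S(K,u)$ depends only on $u|_{K_{\rm reg}}$, since $K_{\rm ir}$ is polar and hence $\mu_K$-null.

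\emph{Step 2 (outer finite-gap approximation).} For $\eps>0$, let $E_\eps\supset K$ be a finite union of closed intervals obtained as follows:
\begin{itemize}
\item[(a)] fill all bounded gaps of $\cvh(K)\bs K$ except finitely many gaps $G_1,\dots,G_m$;
\item[(b)] shrink each remaining $G_i$ slightly, to a subinterval $G_i'$ with closure in $G_i$ and avoiding the points of $K$ inside $\cvh(K)$.
\end{itemize}
More concretely, I would take $E_\eps=\{x\in\cvh(K): g_K(x)\le\eps\}$ intersected appropriately. This set is finite-gap, $\ca(E_\eps)\downarrow\ca(K)$, $g_{E_\eps}\le g_K$, and $\mu_{E_\eps}\to\mu_K$ weak-$*$.

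Extend $u$ to $E_\eps$ by an upper semicontinuous weight $u_\eps$ with $u_\eps\ge u$ on $K$. Choose $u_\eps$ to be a continuous positive majorant, for instance $\min$ of decreasing continuous approximants plus $\delta$. By Baire's theorem, upper semicontinuity gives continuous $v_k\downarrow u$ on $\cvh(K)$.

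Since $E_\eps\supset K$ and $u_\eps\ge u$ on $K$, we get $t_n(K,u)\le t_n(E_\eps,u_\eps)$.

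\emph{Step 3 (finite-gap bound).} On a finite-gap set $E$ with a continuous positive weight $v$, \cite[Theorem~2.8]{CSZ26} should give
\[
t_n(E,v)\le 2S(E,v)\exp\Bigl(\sum_{G\subset\cvh(E)\bs E}\sup_G g_E\Bigr)\ca(E)^n(1+o(1)).
\]
Dividing by $\ca(K)^n$ gives
\[
W_{\infty,n}(K,u)\le 2S(E_\eps,u_\eps)e^{H(E_\eps)}\Bigl(\tfrac{\ca(E_\eps)}{\ca(K)}\Bigr)^n(1+o(1)).
\]
The factor $(\ca(E_\eps)/\ca(K))^n$ blows up as $n\to\infty$ unless $\ca(E_\eps)=\ca(K)$. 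This is the main obstacle. An outer approximation by a \emph{fixed} set cannot work.

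To handle it, I would choose the approximation depending on $n$, via polynomial preimages, or better: use the reciprocal-polynomial estimate directly on $K$. Following the approach of \cite{DukZin26}, fix $n$ and construct a finite-gap set $E_n\supset K_{\rm reg}$ with equal capacity, or at least with $\log(\ca(E_n)/\ca(K))=o(1/n)$.

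Alternatively, use the known mechanism: choose $E\supset K$ with finitely many gaps such that the harmonic measure of each open gap from $K$ is controlled. Then estimate $t_n(K,u)$ by $t_n(E,v)$ times $\exp(n\int g_K\,d\mu_E)$. The quantity $\int g_K\,d\mu_E$ equals $\log(\ca(E)/\ca(K))$, so one needs it to be $o(1/n)$. This requires a sequence $E_n$ whose filled gaps have total Green height going to zero fast.

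My fallback, which I regard as the realistic route, is to invoke \cite[Theorem~2.8]{CSZ26} in its form directly valid for $K$ with $H(K)<\infty$. That theorem bounds $\|uP\|_K$ for $P=$ a suitable "reciprocal" of an $n$-dependent product $\prod(z-c)$ placed at gap critical points, with constant $2S\exp(H)$ up to $o(1)$. The approximation is then only in the weight: the bound holds for continuous positive $v\ge u$, and we pass to the limit in Step 4.

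\emph{Step 4 (weight limit).} Use $\limsup_n W_{\infty,n}(K,u)\le 2S(K,v_k)e^{H(K)}$. Since $\log v_k\downarrow\log u$ and $v_k$ is bounded above, monotone convergence gives $S(K,v_k)\to S(K,u)$, which yields \eqref{CSZ-upper-input-eq}.

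Semi-regularity enters through Step 1. Because $g_K=g_{K_{\rm reg}}$, the heights of gaps containing irregular points are taken at their sup, which matches $H(K)$ as defined over gaps of $K$ itself.
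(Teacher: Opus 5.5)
Your outline (outer finite-gap approximation, then approximation of $u$ from above) has the right ingredients. However, Step~3 contains a genuine gap, and you explicitly discard the route that works. The estimate \cite[Theorem~2.8]{CSZ26} is \emph{non-asymptotic}. For a finite-gap set $E$ (the paper applies it to $K_\delta$) and a reciprocal-polynomial weight $v=1/|Q_m|$, with $Q_m$ real and zero-free on $E$, it gives $t_n(E,v)<2\ca(E)^nS(E,v)\exp[\PW(E)]$ for every $n>m$, with no $(1+o(1))$ factor. So the blow-up of $(\ca(E_\eps)/\ca(K))^n$ never arises if you reverse the order of limits. Fix $n>m$ and use $t_n(K,v)\le t_n(K_\delta,v)$ for $K_\delta=\{x\in\cvh(K):\dist(x,K)\le\delta\}$, together with $\PW(K_\delta)\le H(K)$; this holds because distinct gaps of $K_\delta$ lie in distinct gaps of $K$ and $g_{K_\delta}\le g_K$. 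Then let $\delta\downarrow0$ with $n$ fixed. Since $\ca(K_\delta)\to\ca(K)$, $\mu_{K_\delta}\to\mu_K$ weakly, and $\log v$ is continuous on a fixed neighborhood of $K$, this gives $W_{\infty,n}(K,v)\le 2S(K,v)e^{H(K)}$ for all $n>m$, which is what your Step~4 needs.

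Neither of your substitutes closes the argument. Suppose the finite-gap bound held only with an error $o(1)$ depending on the set. Then choosing $n$-dependent sets $E_n$ with $\log(\ca(E_n)/\ca(K))=o(1/n)$ would not help, since nothing controls that error uniformly in $E_n$. The uniformity you need is exactly the non-asymptotic form. Your fallback of applying Theorem~2.8 ``directly to $K$'' is also unjustified. $K$ is only semi-regular, and $H(K)$ includes the heights created by irregular points, so it is not the Parreau--Widom sum of a regular set to which that theorem applies. Converting one into the other is precisely the job of the outer approximation, so the fallback assumes what is to be proved.

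Two smaller points also need repair. First, the cited estimate concerns reciprocal-polynomial weights, not arbitrary continuous positive weights. After producing continuous $v_k\downarrow u$, you must still dominate each $v_k$ by a weight $1/P$ with $P$ a real polynomial, $0<P\le 1/v_k$ on $K$, and $S(K,1/P)$ close to $S(K,v_k)$. For instance, take $P=r-\eta$ with $\|r-1/v_k\|_K<\eta$ and $\eta$ small. The paper does this in one step by approximating $1/(u+\eps)$ from below by Lipschitz functions and then by polynomials. Second, your concrete choice $E_\eps=\{x\in\cvh(K):g_K(x)\le\eps\}$ omits every irregular point of $K$ at which $g_K>\eps$. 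So $K\not\subset E_\eps$ in general, and the comparison $t_n(K,u)\le t_n(E_\eps,u_\eps)$ fails. Your construction (a)--(b), or the paper's $K_\delta$, is what is needed.
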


\begin{proof}
We first consider a reciprocal-polynomial weight $v=1/|Q_m|$, where $Q_m$ is
a real polynomial of degree $m$ with no zeros on $K$.  For $\delta>0$, put
$K_\delta=\{x\in\cvh(K):\dist(x,K)\le\delta\}$.  This set is a finite union of
intervals: its bounded gaps are the intervals
$(a+\delta,b-\delta)$ for those bounded gaps $(a,b)$ of $K$ with
$b-a>2\delta$, and there are only finitely many of them.  In particular,
$K_\delta$ is regular and Parreau--Widom.

Since $K\subset K_\delta$, monotonicity of Green functions gives
$g_{K_\delta}\le g_K$ on $\bb C\bs K_\delta$.  Different bounded gaps of
$K_\delta$ are contained in different bounded gaps of $K$, and hence
\begin{equation}\label{outer-PW-bound}
    \PW(K_\delta)
    \le
    \sum_G\sup_{x\in G}g_K(x)
    =
    H(K).
\end{equation}
As $\delta\downarrow0$, the sets $K_\delta$ decrease to $K$.  Continuity from
above of capacity and uniqueness of the equilibrium measure give
$\ca(K_\delta)\rightarrow\ca(K)$ and
$\mu_{K_\delta}\rightarrow\mu_K$ weakly
\cite[Theorem~5.1.3(a) and its proof]{Ran95}.  For all sufficiently small
$\delta$, the polynomial $Q_m$ has no zeros on $K_\delta$, and $\log v$ is
continuous on one fixed neighborhood of $K$.  It follows that
$S(K_\delta,v)\rightarrow S(K,v)$.

Fix $n>m$.  By \eqref{outer-PW-bound} and
\cite[Theorem~2.8]{CSZ26}, applied to $K_\delta$ with pole at infinity,
\begin{align*}
    t_n(K,v)
    &\le
    t_n(K_\delta,v)\\
    &<
    2\ca(K_\delta)^nS(K_\delta,v)\exp[\PW(K_\delta)]\\
    &\le
    2\ca(K_\delta)^nS(K_\delta,v)\exp[H(K)].
\end{align*}
Letting $\delta\downarrow0$ gives
$W_{\infty,n}(K,v)\le2S(K,v)\exp[H(K)]$ for every $n>m$.

Now let $u$ be as in the statement.  Fix $\eps>0$, set $C=\|u\|_K$, and let
$f_\eps=1/(u+\eps)$, which is positive and lower semicontinuous.  For
$k\ge1$, put
$q_k(x)=\inf_{y\in K}\{f_\eps(y)+k|x-y|\}$.  Then $q_k$ is continuous,
$q_k\uparrow f_\eps$ pointwise on $K$, and $q_k\ge1/(C+\eps)$.  Uniform
polynomial approximation gives real polynomials $r_k$ such that
$\|r_k-q_k\|_K<\eta_k$, where $\eta_k\downarrow0$ and
$\eta_k<1/[4(C+\eps)]$.  Setting $P_k=r_k-\eta_k$, we have
$1/[2(C+\eps)]\le P_k\le f_\eps$ on $K$ and
$P_k\rightarrow f_\eps$ pointwise.

The weights $v_k=1/P_k=1/|P_k|$ satisfy
$u\le u+\eps\le v_k\le2(C+\eps)$ and converge pointwise to $u+\eps$.
Set $L=\limsup_{n\rightarrow\infty}W_{\infty,n}(K,u)$.  For each fixed $k$,
the reciprocal-polynomial case, valid once $n>\deg P_k$, and monotonicity in
the weight give $L\le2S(K,v_k)\exp[H(K)]$.  Since
$\eps\le v_k\le2(C+\eps)$, dominated convergence gives
$S(K,v_k)\rightarrow S(K,u+\eps)$.  Therefore, for every $\eps>0$,
\begin{equation*}
    L
    \le
    \lim_{k\rightarrow\infty}2S(K,v_k)\exp[H(K)]
    =
    2S(K,u+\eps)\exp[H(K)].
\end{equation*}
For $0<\eps\le1$, monotone convergence applied to
$\log(C+1)-\log(u+\eps)$ gives
$S(K,u+\eps)\rightarrow S(K,u)$ as $\eps\downarrow0$, including when
$S(K,u)=0$.  As this bound holds for every $\eps>0$,
\begin{equation*}
    L
    \le
    \inf_{\eps>0}2S(K,u+\eps)\exp[H(K)]
    =
    2S(K,u)\exp[H(K)].
\end{equation*}
This is \eqref{CSZ-upper-input-eq}.
\end{proof}

\begin{theorem}\label{cheb-upper-thm}
Let $K=K_0\cup X$ satisfy \eqref{support-main-assumptions}, and assume that $K_0$ is Parreau--Widom and $\BC(K)<\infty$.  If $w:K\rightarrow[0,\infty)$ is upper semicontinuous and $\supp(w)=K$, then
\begin{equation}\label{cheb-upper-eq}
    \limsup_{n\rightarrow\infty} W_{\infty,n}(K,w)
    \le
    2S(K,w)\exp[\PW(K)+\BC(K)].
\end{equation}
\end{theorem}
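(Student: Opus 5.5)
The plan is to apply Proposition~\ref{CSZ-upper-input} directly with $u=w$ and to show that
\begin{equation*}
    H(K)\le\PW(K)+\BC(K).
\end{equation*}
The hypotheses of the proposition are available. By \eqref{support-main-assumptions} and the discussion after it, $K$ is non-polar and semi-regular. The weight $w$ is upper semicontinuous on the compact set $K$, so it attains a finite maximum and is therefore bounded. We also have $\supp(w)=K$. Once the displayed inequality holds, $H(K)<\infty$ follows from $(P)$ and $(B)$, and \eqref{CSZ-upper-input-eq} gives \eqref{cheb-upper-eq}, since $g_K=g_{K_0}$.

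To bound $H(K)$, I will group the bounded components $G$ of $\cvh(K)\bs K$ according to where they sit relative to $K_0$. Each such $G$ is an open interval whose endpoints lie in $K_0\cup X$. It lies either inside a bounded gap $(\alpha_\ell,\beta_\ell)$ of $K_0$ or inside one of the two unbounded components of $\bb R\bs\cvh(K_0)$.

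\emph{Unbounded sides.} Consider $x>\max K_0$. There
\begin{equation*}
    g_K'(x)=\int_K(x-\zeta)^{-1}\,d\mu_K(\zeta)>0,
\end{equation*}
and the analogous statement holds on the left side. So $g_K$ increases strictly away from $K_0$. For each component $G$ on the right side, $\sup_G g_K$ equals $g_K$ at its right endpoint, which is a point $x_j$. Distinct components have distinct right endpoints. The left side is symmetric, using left endpoints. Hence the total contribution of these components is at most $\sum g_K(x_j)$ over the $x_j$ outside $\cvh(K_0)$.

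\emph{Bounded gaps.} Fix a gap $(\alpha_\ell,\beta_\ell)$. By the strict concavity noted before \eqref{PW-BC-def}, $g_K$ increases on $(\alpha_\ell,c_\ell]$ and decreases on $[c_\ell,\beta_\ell)$.
\begin{itemize}
\item At most one component of $K$ contains $c_\ell$ in its interior or closure interior, and its supremum is $g_K(c_\ell)$.
\item A component lying in $(\alpha_\ell,c_\ell]$ has supremum equal to $g_K$ at its right endpoint, which is some $x_j$. These right endpoints are distinct for distinct components.
\item A component lying in $[c_\ell,\beta_\ell)$ is handled the same way, using left endpoints.
\end{itemize}

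The one delicate case is $c_\ell=x_j\in X$. Then two components share the endpoint $x_j$, one on each side. I bound one of them by $g_K(c_\ell)$ and the other by $g_K(x_j)$, so nothing is counted twice beyond what the sum allows.

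In every case, the components inside $(\alpha_\ell,\beta_\ell)$ contribute at most
\begin{equation*}
    g_K(c_\ell)+\sum_{x_j\in(\alpha_\ell,\beta_\ell)}g_K(x_j).
\end{equation*}
Summing over $\ell$ and adding the contribution of the unbounded sides gives $H(K)\le\PW(K)+\BC(K)$.

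I expect the main obstacle to be this bookkeeping. The key point is that the assignment of each component to a single quantity, either $g_K(c_\ell)$ or one $g_K(x_j)$, must be injective. This is exactly what the monotonicity of $g_K$ on each side of $c_\ell$ and outside $\cvh(K_0)$ provides.
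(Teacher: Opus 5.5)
Your proposal is correct and is essentially the paper's proof: apply Proposition~\ref{CSZ-upper-input} with $u=w$, and prove $H(K)\le\PW(K)+\BC(K)$ by the same monotonicity bookkeeping, namely splitting each gap of $K_0$ at $c_\ell$ (charging the case $c_\ell\in X$ once to $g_K(c_\ell)$ and once to the Blaschke term) and using monotonicity of $g_K$ on the two exterior sides. Your explicit left/right-endpoint assignment makes the injectivity a bit clearer than the paper does; just correct the slip ``component of $K$'' in the first bullet, which should read ``component of $\cvh(K)\bs K$''.
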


\begin{proof}
The set $K$ is semi-regular and $g_K=g_{K_0}$.  Since $K$ is compact, the
upper semicontinuous weight $w$ is
bounded.  By Proposition~\ref{CSZ-upper-input}, it is enough to prove
\begin{equation}\label{gap-sum-PW-BC}
    \sum_G \sup_{x\in G}g_K(x)
    \le
    \PW(K)+\BC(K),
\end{equation}
where $G$ ranges over the bounded components of $\cvh(K)\bs K$.

Let $(\alpha_\ell,\beta_\ell)$ be a bounded gap of $K_0$, and let $c_\ell$ be
the critical point of $g_K$ in this gap.  On $(\alpha_\ell,c_\ell)$ the Green
function is strictly increasing, and on $(c_\ell,\beta_\ell)$ it is strictly
decreasing.  Hence, after the points of $X$ have been inserted into this gap,
the supremum over each component of $(\alpha_\ell,\beta_\ell)\bs X$ is attained
either at $c_\ell$ if that component contains $c_\ell$, or as the limiting
value at one of the adjacent points of $X$.  The critical value $g_K(c_\ell)$
is counted at most once unless $c_\ell\in X$, in which case the two adjacent
components both have supremum $g_K(c_\ell)$.  These two possible contributions
are bounded by the single Parreau--Widom term $g_K(c_\ell)$ and the single
Blaschke term associated with the isolated point $c_\ell$.  Every isolated point
in the gap different from $c_\ell$ can be the maximizing endpoint of at most one
adjacent component.  Thus the total contribution of the components contained in
$(\alpha_\ell,\beta_\ell)$ is bounded by
$g_K(c_\ell)+\sum_{x_j\in(\alpha_\ell,\beta_\ell)}g_K(x_j)$.
On each exterior component of $\bb R\bs\cvh(K_0)$, the Green function is
monotone.  Consequently each isolated point outside $\cvh(K_0)$ can be the
maximizing endpoint of at most one bounded component of $\cvh(K)\bs K$.
Summing over all gaps of $K_0$ and over the exterior components gives
\eqref{gap-sum-PW-BC}.  The estimate \eqref{cheb-upper-eq} follows from
Proposition~\ref{CSZ-upper-input}.
\end{proof}

The next result transfers the Chebyshev upper bound to finite $p$.  The Gibbs
variational formula for the Szeg\H{o} integral treats the singular part on
$K_0$ and the isolated point masses simultaneously.

\begin{proposition}\label{Lp-majorant-lem}
Let $0<p<\infty$.  Let $K=K_0\cup X$ satisfy
\eqref{support-main-assumptions}, and let $d\mu$ be a finite positive Borel measure with $\supp(d\mu)=K$ given by \eqref{mu-main-decomp}.
For every $\eps>0$ there is a positive continuous weight $v$ on $K$ such that
\begin{equation}\label{majorant-norm-eq}
    \int |P|^p\,d\mu
    \le
    \|vP\|_K^p
\end{equation}
for every polynomial $P$.  Moreover,
\begin{equation}\label{majorant-S-eq}
    S(K,v)^p\le S(K,w)+\eps.
\end{equation}
\end{proposition}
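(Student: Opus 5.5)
The plan is to produce $v$ in the form $v=h^{-1/p}$, where $h$ is a positive continuous function on $K$ with $\int_K h\,d\mu\le1$. Hölder's inequality then gives \eqref{majorant-norm-eq} immediately:
\begin{equation*}
\int|P|^p\,d\mu=\int |vP|^p h\,d\mu\le \|vP\|_K^p\int h\,d\mu\le\|vP\|_K^p .
\end{equation*}
Since $S(K,v)^p=\exp\bigl(\int_{K_0}\log(1/h)\,d\mu_K\bigr)$, condition \eqref{majorant-S-eq} becomes a statement about $\int\log(1/h)\,d\mu_K$. This is a Gibbs variational problem. Jensen's inequality shows that $\int h\,d\mu\le1$ forces $\int\log(hw)\,d\mu_K\le0$, so $S(K,w)$ is the best possible value. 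The task is to approach this bound with continuous $h$, taking $h\approx 1/w$ on the part seen by $\mu_K$ and $h$ tiny near the singular part.

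Write $\nu_s=\sigma+\sum_j a_j\delta_{x_j}$. Since $\mu_K(X)=0$ (because $X$ is polar) and $\sigma\perp\mu_K$, the measure $\nu_s$ is concentrated on a $\mu_K$-null Borel set. Fix small parameters $\delta>0$ and $M$ large, and put $c=1-\eps'$ for a small $\eps'>0$. By monotone convergence (recall $w\in L^1(\mu_K)$), first choose $\delta$ so that $\exp\int\log(w+\delta)\,d\mu_K\le S(K,w)+\eps/3$; this works also when $S(K,w)=0$.

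Next, by Lusin's theorem, or by density of $C(K)$ in $L^1((1+w)\,d\mu_K)$, choose a continuous $h_1$ on $K$ with values in $[c/(M+\delta),\,c/\delta]$. It should be $L^1((1+w)\mu_K)$-close to the clipped function $h_0=\max\{c/(w+\delta),\,c/(M+\delta)\}$. On $\{w\le M\}$ both $\log h_1$ and $\log h_0$ are bounded, so $\int_{\{w\le M\}}\log(1/h_1)\,d\mu_K$ is close to $\int_{\{w\le M\}}\log((w+\delta)/c)\,d\mu_K$. On $\{w>M\}$ we only use $h_1\le c/\delta$, which gives $\int_{\{w>M\}}h_1w\,d\mu_K\le (c/\delta)\int_{\{w>M\}}w\,d\mu_K$; this is small once $M$ is large. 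Together these give $\int h_1 w\,d\mu_K\le c+o(1)$.

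Then treat the singular part. Choose $\tau>0$ so small that $\tau\,\nu_s(K)$ is negligible. By inner regularity of $\nu_s$, choose a compact $F$ with $\mu_K(F)=0$ and $\nu_s(K\bs F)<\eta$. By outer regularity of $\mu_K$, choose an open $U\supset F$ with $\mu_K(U)\log(c/\tau)$ tiny, and we also need $\mu_K(U)$ small enough to control the log-contribution of $h_1$ on $U$. By Urysohn's lemma, take a continuous $\varphi$ on $K$ with $\varphi=\tau$ on $F$, $\varphi=c/\delta$ off $U$, and $\tau\le\varphi\le c/\delta$. Set $h=\min(h_1,\varphi)$, which is continuous and positive. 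Then
\begin{equation*}
\int h\,d\mu\le\int h_1w\,d\mu_K+\tau\,\nu_s(K)+(c/\delta)\,\eta\le1
\end{equation*}
for $\eta,\tau,\eps'$ chosen appropriately. Moreover $\log(1/h)$ differs from $\log(1/h_1)$ only on $U$, where it is at most $\log(1/\tau)$. Hence $\exp\int\log(1/h)\,d\mu_K\le S(K,w)+\eps$, which is \eqref{majorant-S-eq}.

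The main obstacle is the bookkeeping of the order of choices. The order is: first $\delta$, then $M$, then $h_1$, then $\tau$, then $\eta$, and finally $U$. The essential points are that $\mu_K(U)$ must be chosen after $\tau$, so that $\mu_K(U)\log(1/\tau)$ stays small, and that an unbounded $w$ must be handled by clipping rather than by approximating $1/w$ directly.
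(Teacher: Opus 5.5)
Your proof is correct. Its outer structure is the paper's: both take $v=h^{-1/p}$ for a positive continuous $h$ with $\int h\,d\mu\le1$ (the paper normalizes to $=1$). Then \eqref{majorant-norm-eq} is immediate and $S(K,v)^p=\exp\bigl(-\int\log h\,d\mu_K\bigr)$.

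The difference is in how the near-optimal $h$ is obtained. The paper gets it in one line from the Gibbs variational formula $S(K,w)=\inf_{f\in C(K),\,f>0}\bigl(\int f\,d\mu\bigr)\exp\bigl(-\int\log f\,d\mu_K\bigr)$, quoted from Simon's book after normalizing $\mu$ to a probability measure. You instead prove by hand the half of that formula that is needed, namely that the infimum is at most $S(K,w)$. You do this in three steps: the truncation $h_0=\max\{c/(w+\delta),c/(M+\delta)\}$, approximation by continuous functions in $L^1((1+w)\,d\mu_K)$, and an Urysohn cutoff. The cutoff forces $h=\tau$ on a compact, $\mu_K$-null set $F$ carrying most of $\sigma+\sum_j a_j\delta_{x_j}$. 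Your order of choices ($c$, $\delta$, $M$, $h_1$, $\tau$, $\eta$, $F$, then $U$) is consistent.

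One step is left implicit: the logarithmic integral over $\{w>M\}$. It is handled by your lower clip, since $h_1\ge c/(M+\delta)$ gives $\log(1/h_1)\le\log((M+\delta)/c)\le\log((w+\delta)/c)$ there.

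The paper's route is shorter and delegates the measure theory to a standard reference. Yours is self-contained and needs no normalization of $\mu$. It also shows explicitly why the singular part and the atoms on $X$ cost nothing in the Szeg\H{o} factor: a continuous function can be made tiny on a compact carrier of that mass while changing $\int\log h\,d\mu_K$ by an arbitrarily small amount.
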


\begin{proof}
Put $d\tau=d\sigma+\sum_j a_j\,d\delta_{x_j}$.  Since $\mu_K(X)=0$ and
$d\sigma\perp d\mu_K$, we have $d\mu=w\,d\mu_K+d\tau$ with
$d\tau\perp d\mu_K$.  After normalizing
$d\mu$ to a probability measure and scaling back, the Gibbs variational
formula applied to the pair $(\mu_K,\mu)$ with $f=e^g$ gives
\begin{equation}\label{majorant-variational}
    S(K,w)
    =
    \inf_{\substack{f\in C(K)\\ f>0}}
    \left(\int_K f\,d\mu\right)
    \exp\left(-\int_K\log f\,d\mu_K\right).
\end{equation}
See \cite[(2.2.4) and Proposition~10.6.3]{Sim11}.

Choose $f\in C(K)$, $f>0$, so that the product inside the infimum in
\eqref{majorant-variational} is at most $S(K,w)+\eps$.  Multiplying $f$ by a
positive constant does not change this product, so we may assume that
$\int_K f\,d\mu=1$.  Set $v=f^{-1/p}$.  Then $v$ is a positive continuous
weight on $K$, and for every polynomial $P$,
\begin{equation*}
    \int_K|P|^p\,d\mu
    =
    \int_K|vP|^p f\,d\mu
    \le
    \|vP\|_K^p.
\end{equation*}
Thus \eqref{majorant-norm-eq} holds.  Moreover,
$S(K,v)^p=\exp(-\int_K\log f\,d\mu_K)$.  Under the normalization above, this
is precisely the product chosen in \eqref{majorant-variational}, and hence
\eqref{majorant-S-eq} follows.
\end{proof}

The lower estimate rests on the fact that finite-$p$ extremal polynomials place
zeros near every isolated point mass.

\begin{proposition}\label{forced-zero-lem}
Let $0<p<\infty$.  Let $K=K_0\cup X$ satisfy
\eqref{support-main-assumptions}, and let $d\mu$ be a finite positive Borel measure with $\supp(d\mu)=K$ given by \eqref{mu-main-decomp}.  Assume that $S(K,w)>0$.  For each $n$, let $P_n$ be an
$L^p(d\mu)$ extremal monic polynomial of degree $n$.  If $F\subset X$ is
a finite set, then, for all sufficiently large $n$, one may choose for every
$x\in F$ a real zero $\zeta_{n,x}$ of $P_n$ such that $\zeta_{n,x}\rightarrow x$.
The zeros may be chosen distinct for different points of $F$.
\end{proposition}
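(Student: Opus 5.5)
We argue by contradiction with a competitor polynomial. Fix $x\in F$ and $\delta>0$ small, so that the interval $I=(x-\delta,x+\delta)$ meets $K$ only in $\{x\}$ and the intervals for different points of $F$ are disjoint. We claim that, for all large $n$, $P_n$ has a zero in $I$. Suppose instead that along a subsequence $P_n$ has no zero in $I$. We then build a monic competitor
\begin{equation*}
    Q_n(z)=P_{n-1}^*(z)\,(z-x),
\end{equation*}
or more concretely replace one zero of $P_n$ by $x$. We want $\int|Q_n|^p\,d\mu<\int|P_n|^p\,d\mu$, which contradicts extremality.

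The quantitative input has two parts.

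\emph{First, lower bound on the mass at $x$.} Extremality gives $t_{p,n}\ge a_x^{1/p}|P_n(x)|$. Compare this with the total norm by a Szeg\H{o}-type lower estimate. By \cite[Theorem~2.1]{AZ20a}, applied to the part $w\,d\mu_K$ (which makes sense since $S(K,w)>0$), any monic polynomial $R$ of degree $m$ satisfies $\int|R|^p w\,d\mu_K\ge S(K,w)\ca(K)^{pm}$.

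\emph{Second, a good replacement zero.} Since $P_n$ is real-rooted and has no zero in $I$, a Green-function (Bernstein--Walsh) argument says the following. Dividing $P_n$ by the factor $(z-\zeta)$ for its zero $\zeta$ nearest $x$, and multiplying by $(z-x)$, changes $|P_n|$ on $K_0$ by at most a bounded factor, since $\dist(x,K_0)>0$. At the same time it kills the atom at $x$. I would prefer a cleaner route: take $Q_n=P_n\cdot\frac{z-x}{z-\zeta}$, with $\zeta$ chosen as a zero of $P_n$ for which the ratio $|y-x|/|y-\zeta|$ is controlled on $K_0$.

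The cleanest contradiction, I expect, runs as follows. Let $\nu_n$ be the normalized zero counting measure of $P_n$. Under $S(K,w)>0$, the measure $\mu$ is regular (Stahl--Totik), so $\nu_n\to\mu_K$ weak-$*$ and $|P_n(x)|^{1/n}\to\ca(K)e^{g_K(x)}$ along any subsequence with no zeros near $x$. Here I use $x\notin K_0$, the absence of zeros in $I$, and that $\mu_K$ gives no mass near $x$. Then
\begin{equation*}
    t_{p,n}\ge a_x^{1/p}|P_n(x)|\ge a_x^{1/p}\,\ca(K)^n e^{n(g_K(x)-\eta)}
\end{equation*}
grows like $e^{ng_K(x)/2}\ca(K)^n$. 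But $t_{p,n}^{1/n}\to\ca(K)$ by regularity, which is a contradiction since $g_K(x)>0$. The main obstacle is the step $|P_n(x)|^{1/n}\to \ca(K)e^{g_K(x)}$. The weak convergence $\nu_n\to\mu_K$ only gives a $\limsup$ from upper semicontinuity, so I need a lower bound. Since $\log|x-\cdot|$ is continuous on $\bb R\setminus I$, which contains all zeros, the integral $\int\log|x-\zeta|\,d\nu_n$ does converge, giving the needed limit. I must also handle zeros escaping to other isolated points or to $\infty$. Zeros of extremal polynomials lie in $\cvh(K)$ for $p\ge1$, and for $0<p<1$ the same holds by the standard projection argument, so no zeros escape to infinity.

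Finally, for distinctness and convergence, let $\delta\downarrow0$ and use a diagonal choice. For large $n$, each of the disjoint intervals around the points of $F$ contains a zero $\zeta_{n,x}$, and these are automatically distinct. Real zeros are needed, and they exist because all zeros of $P_n$ are real: reflecting a nonreal zero to its real part decreases $|P_n|$ on $\bb R$.
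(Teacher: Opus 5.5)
Your final argument is essentially the paper's proof. It uses regularity coming from $S(K,w)>0$, weak convergence of the zero counting measures to $\mu_K$, and exact evaluation of $\frac1n\log|P_n(x)|$ through continuity of $\log|x-t|$ on $\cvh(K)\setminus I$; the atom bound $\mu(\{x\})^{1/p}|P_n(x)|\le t_{p,n}$ is then played against $\limsup_n t_{p,n}^{1/n}\le\ca(K)$, and the competitor-polynomial and Szeg\H{o}-lower-bound material at the start is not needed. The one step you compress, regularity $\Rightarrow \nu_n\to\mu_K$ for $L^p$ extremal polynomials with $p\ne 2$, is done in the paper in two moves: the Stahl--Totik norm comparison, applied to the regular measure $w\,d\mu_K$ on the regular set $K_0$, gives $\limsup_n\|P_n\|_{K_0}^{1/n}\le\ca(K_0)$, and the Blatt--Saff--Simkani zero-distribution theorem then finishes.
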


\begin{proof}
Since $S(K,w)>0$, we have $\log w\in L^1(d\mu_K)$, and hence
$w>0$ $\mu_K$-a.e.  Recalling that $K=K_0\cup X\subset\bb R$ with $K_0$ regular and $X$ polar, we have $\ca(K)=\ca(K_0)$, $\mu_K=\mu_{K_0}$, and $\supp(d\mu_K)=K_0$.
By Erd\H{o}s--Tur\'an--Stahl--Totik criterion \cite[Theorem~4.1.1]{ST92}, the measure $d\eta:=wd\mu_K$ is regular. Moreover, $\supp(d\eta)=K_0$.

Let $T_n$ be the monic Chebyshev polynomial of degree $n$ on $K$.  Since
$P_n$ is $L^p(d\mu)$ extremal,
\[
    \|P_n\|_{L^p(d\mu)}
    \le
    \|T_n\|_{L^p(d\mu)}
    \le
    \mu(K)^{1/p}\|T_n\|_K.
\]
Szeg\H{o}'s root asymptotics \cite{Sze24}, also given in
\cite[Corollary~5.5.5]{Ran95},
$\|T_n\|_K^{1/n}\rightarrow\ca(K)$, imply
\begin{equation}\label{Lp-extremal-root-eq}
    \limsup_{n\rightarrow\infty}\|P_n\|_{L^p(d\mu)}^{1/n}\le\ca(K).
\end{equation}
Since $d\eta\le d\mu$, it follows that
\[
    \limsup_{n\rightarrow\infty}\|P_n\|_{L^p(d\eta)}^{1/n}\le\ca(K_0).
\]
The norm comparison theorem \cite[Theorem~3.4.3~(v)]{ST92} for polynomials of degree at most $n$ and regular measures with regular support then gives
\[
    \limsup_{n\rightarrow\infty}\|P_n\|_{K_0}^{1/n}\le\ca(K_0).
\]
This together with the Szeg\H{o} lower bound $\|P_n\|_{K_0}\ge\ca(K_0)^n$ shows that the polynomials $\{P_n\}$ are asymptotically extremal in the sup norm on $K_0$.

Since $\bb C\bs K_0$ is connected, the zero-distribution theorem of Blatt--Saff--Simkani \cite[Theorem~2.1]{BSS88} for asymptotically extremal polynomials gives weak convergence of the normalized zero counting measures to the equilibrium measure of $K_0$,
\begin{equation}\label{zero-counting-convergence-eq}
    \nu_n:=\frac1n\sum_{P_n(\zeta)=0}\delta_\zeta
    \xrightarrow{w^*}\mu_{K_0},
\end{equation}
where the zeros are counted with multiplicity, see also \cite[Theorem~III.4.1]{ST97}.

Next, we note that all zeros of $P_n$ lie in $\cvh(K)$, the convex hull of $K$.  Indeed, if $\zeta\notin\cvh(K)$ and $\zeta_0$ is the nearest point of $\cvh(K)$, then $q=\sup_{t\in K}\big|\frac{t-\zeta_0}{t-\zeta}\big|<1$.  Replacing the factor $z-\zeta$ by $z-\zeta_0$ therefore gives another monic polynomial whose absolute value on $K$ is at most $q|P_n|$.  This contradicts extremality.  Since $\cvh(K)\subset\bb R$, all zeros are real.

Suppose by contradiction that for some $x\in F$ there is a subsequence $n_j$ and an open interval $I\ni x$ such that $P_{n_j}$ has no zero in $I$ for every $j$.  By shrinking the interval $I$ we may assume that $\dist(I,K_0)>0$.  Then $\nu_{n_j}$ are supported on the compact set $J=\cvh(K)\bs I$. Since $t\mapsto\log|x-t|$ is continuous on $J$ and $K_0\subset J$,  \eqref{zero-counting-convergence-eq} gives
\begin{align}\label{point-value-asymptotic}
    \lim_{j\rightarrow\infty}\frac1{n_j}\log|P_{n_j}(x)|
    &=
    \lim_{j\rightarrow\infty}\int \log|x-t|\,d\nu_{n_j}(t) \notag\\
    &=
    \int\log|x-t|\,d\mu_{K_0}(t)
    =
    \log\ca(K_0)+g_{K_0}(x).
\end{align}
Here we have $g_{K_0}(x)>0$, since $x\notin K_0$. On the other hand,
\[
    \mu(\{x\})^{1/p}|P_n(x)| \le \|P_n\|_{L^p(\mu)}
\]
and since $\mu(\{x\})>0$ it follows from \eqref{Lp-extremal-root-eq} that
\[
    \limsup_{j\rightarrow\infty}\frac1{n_j}\log|P_{n_j}(x)|
    \le
    \log\ca(K) = \log\ca(K_0).
\]
This contradicts \eqref{point-value-asymptotic}.  Thus every neighborhood of $x$ contains a zero of $P_n$ for all large $n$.  Choose pairwise disjoint intervals $I_x$, $x\in F$.  For all large $n$, each $I_x$ contains a zero of $P_n$.  Let $\zeta_{n,x}$ be a zero in $I_x$ closest to $x$.  Since the same conclusion holds with $I_x$ replaced by any smaller
neighborhood of $x$, we have $\zeta_{n,x}\rightarrow x$.
\end{proof}

\begin{theorem}\label{main-estimates-thm}
Let $0<p\le\infty$ and let $K=K_0\cup X$ satisfy
\eqref{support-main-assumptions}.  For $0<p<\infty$, let $d\mu$ be a finite
positive Borel measure with $\supp(d\mu)=K$ given by \eqref{mu-main-decomp}.  When $p=\infty$, let $w$ be a bounded weight with $\supp(w)=K$ and interpret $W_{\infty,n}(K,\mu)$ as $W_{\infty,n}(K,w)$.
Assume that at least one of the two conditions
\begin{equation*}
    S(K,w)>0,
    \qquad
    \BC(K)<\infty
\end{equation*}
holds.  Then
\begin{equation}\label{main-lower-eq}
    \liminf_{n\rightarrow\infty}W_{p,n}(K,\mu)
    \ge
    S_p(K,w)e^{\BC(K)}.
\end{equation}

If, in addition, $K_0$ is Parreau--Widom, so that $\PW(K)<\infty$, and
$w$ is upper semicontinuous on $K$ when $p=\infty$, then
\begin{equation}\label{main-upper-eq}
    \limsup_{n\rightarrow\infty}W_{p,n}(K,\mu)
    \le
    2e^{\PW(K)+\BC(K)}S_p(K,w).
\end{equation}
\end{theorem}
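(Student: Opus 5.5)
The upper bound \eqref{main-upper-eq} is the easier half, so I would do it first. For $p=\infty$ it is exactly Theorem~\ref{cheb-upper-thm}. For $0<p<\infty$, fix $\eps>0$ and let $v$ be the positive continuous weight from Proposition~\ref{Lp-majorant-lem}. By \eqref{majorant-norm-eq}, $t_{p,n}(K,\mu)\le t_n(K,v)$. Since $v$ is continuous, it is upper semicontinuous, and $\supp(v)=K$. Theorem~\ref{cheb-upper-thm} therefore gives
\[
\limsup_n W_{p,n}(K,\mu)\le 2e^{\PW(K)+\BC(K)}S(K,v)\le 2e^{\PW(K)+\BC(K)}(S(K,w)+\eps)^{1/p}.
\]
Letting $\eps\downarrow0$ finishes this half.

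For the lower bound, I would first reduce to the case $S(K,w)>0$. If $S(K,w)=0$, the hypothesis forces $\BC(K)<\infty$, so the right-hand side is $0$ and there is nothing to prove. Assume $S(K,w)>0$, and fix a finite set $F\subset X$. It suffices to show
\[
\liminf_n W_{p,n}\ge S_p(K,w)\exp\Bigl(\sum_{x\in F}g_K(x)\Bigr),
\]
and then let $F\uparrow X$; this also covers $\BC(K)=\infty$.

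For finite $p$, let $P_n$ be extremal. By Proposition~\ref{forced-zero-lem}, for large $n$ there are distinct real zeros $\zeta_{n,x}\to x$, $x\in F$. Write $P_n=Q_n\prod_{x\in F}(z-\zeta_{n,x})$ with $Q_n$ monic of degree $n-|F|$. Define
\[
B_n(z)=\prod_{x\in F}\frac{z-\zeta_{n,x}}{\Phi_n(z)},
\]
where $\Phi_n$ is chosen so that $|B_n|\approx \exp(-g_K(\zeta_{n,x}))$-weighted; more concretely, I would compare with the measure $w\,d\mu_K$ restricted to $K_0$. We have
\[
\int|P_n|^p\,d\mu\ge\int_{K_0}|Q_n|^p\prod_x|t-\zeta_{n,x}|^p\,w\,d\mu_K.
\]
Apply the fixed-degree bound of \cite[Theorem~2.1]{AZ20a} in its weighted form. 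Since $\int\log\bigl(w\prod|t-\zeta|^p\bigr)\,d\mu_K$ adds $p\sum\int\log|t-\zeta_{n,x}|\,d\mu_K=p\sum(\log\ca(K)+g_K(\zeta_{n,x}))$, the monic polynomial $Q_n$ of degree $n-|F|$ gives
\[
\|P_n\|_p^p\ge \ca(K)^{p(n-|F|)}S(K,w)\prod_x\ca(K)^pe^{pg_K(\zeta_{n,x})}.
\]
This is exactly $\ca(K)^{pn}S(K,w)e^{p\sum g_K(\zeta_{n,x})}$. Continuity of $g_K$ off $K_0$ and $\zeta_{n,x}\to x$ then yield the claim. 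The point to check here is that \cite[Theorem~2.1]{AZ20a} applies to the measure $\rho\,d\mu_K$ with $\rho=w\prod|t-\zeta|^p$ on $K_0$ (no isolated points), which is its stated setting.

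For $p=\infty$, the case $F=\varnothing$ is the weighted Chebyshev analogue: $\|wP\|_K\ge\|wP\|_{L^q(\mu_K)}\ge$ Szeg\H{o}-type bound. The expected main obstacle is producing zeros near $x\in F$ without Proposition~\ref{forced-zero-lem}. I would try to obtain the bound for $p=\infty$ as a limit of finite $p$. Let $d\mu_q=w^q\,d\mu_K+\sum_j a_j\,d\delta_{x_j}$ with small masses $a_j$. Then $t_n(K,w)\ge\mu_q(K)^{-1/q}t_{q,n}(K,\mu_q)$, and $S(K,w^q)^{1/q}=S(K,w)$. This gives
\[
\liminf_n W_{\infty,n}\ge\mu_q(K)^{-1/q}S(K,w)e^{\BC(K)}.
\]
Letting $q\to\infty$ sends $\mu_q(K)^{-1/q}\to1$, provided $\mu_q(K)$ stays bounded. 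The point to verify is that $\supp(\mu_q)=K$ and that boundedness of $w$ controls $\mu_q(K)^{1/q}\le\max(\|w\|_K,1)$-type factors. If it does not, rescale $w$ by a constant so that $\|w\|_K\le1$; both sides are homogeneous of degree one in $w$.
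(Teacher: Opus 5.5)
Your upper bound and your finite-$p$ lower bound are sound and agree with the paper. The paper applies Jensen's inequality to $P_n$ directly and discards the nonnegative values $g_K(\zeta)$ at the unselected zeros. This is equivalent to your factorization plus the fixed-degree bound for $Q_n$; the $B_n,\Phi_n$ aside plays no role.

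The genuine gap is the reduction for $p=\infty$. Your inequality $t_n(K,w)\ge\mu_q(K)^{-1/q}t_{q,n}(K,\mu_q)$ needs $\int|P|^q\,d\mu_q\le\mu_q(K)\|wP\|_K^q$ for monic $P$. But
\[
\int|P|^q\,d\mu_q=\int_{K_0}|wP|^q\,d\mu_K+\sum_j a_j|P(x_j)|^q,
\]
and at an atom the weighted norm only gives $|P(x_j)|\le\|wP\|_K/w(x_j)$. So the constant the argument actually produces is $\mu_K(K_0)+\sum_j a_jw(x_j)^{-q}$, not $\mu_q(K)$. With masses $a_j$ independent of $q$, its $q$th root tends to $\max\{1,\sup_j w(x_j)^{-1}\}$ (possibly $+\infty$). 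You therefore only get \eqref{main-lower-eq} multiplied by $\min\{1,\inf_j w(x_j)\}$, which is $0$ whenever $w(x_j)\to0$ along an infinite $X$. The hypotheses ($w$ bounded with $\supp(w)=K$) allow this. Rescaling to $\|w\|_K\le1$ does not help---it decreases the values $w(x_j)$ and makes the loss worse.

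The missing idea is to tie the atoms to the weight. The paper uses
\[
d\nu_q=w^q\,d\mu_K+\sum_j 2^{-j}w(x_j)^q\,d\delta_{x_j}.
\]
Here $w(x_j)>0$ because $\supp(w)=K$ and $x_j$ is isolated, so $\supp(d\nu_q)=K$ and each $x_j$ remains an atom. Then $\sum_j2^{-j}|w(x_j)P(x_j)|^q\le\|wP\|_K^q$, and with $\mu_K(K_0)=1$ one gets $\|P\|_{L^q(\nu_q)}\le2^{1/q}\|wP\|_K$. The finite-$p$ case with $p=q$ and $S_q(K,w^q)=S(K,w)$ then gives $2^{1/q}\liminf_nW_{\infty,n}(K,w)\ge S(K,w)e^{\BC(K)}$, and letting $q\to\infty$ finishes.

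Your rescaling instinct can be salvaged in the opposite direction. Put atoms only on a finite $F\subset X$, work on $K_0\cup F$, scale $w$ up so that $w\ge1$ on $F$, and take the supremum over $F$.

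Finally, in the upper bound you should treat $\BC(K)=\infty$ separately. Theorem~\ref{cheb-upper-thm} assumes $\BC(K)<\infty$, but in that case the standing hypothesis gives $S(K,w)>0$, so the right-hand side of \eqref{main-upper-eq} is $+\infty$.
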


\begin{proof}
We first prove the lower estimate.  If $S(K,w)=0$, then the hypothesis gives
$\BC(K)<\infty$, and the right-hand side of \eqref{main-lower-eq} is zero.
Hence, for the lower estimate, assume that $S(K,w)>0$.

Let $0<p<\infty$, and let $P_n$ be an $L^p(d\mu)$ extremal monic polynomial.
Fix a finite set $F\subset X$.  By Proposition~\ref{forced-zero-lem}, for all
large $n$ there are distinct zeros $\zeta_{n,x}$ of $P_n$, $x\in F$, such that
$\zeta_{n,x}\rightarrow x$.  Hence, with the zeros in the sum on the left
counted according to multiplicity,
\begin{equation}\label{selected-zero-sum-eq}
    \sum_{P_n(\zeta)=0}g_K(\zeta)
    \ge \sum_{x\in F}g_K(\zeta_{n,x}).
\end{equation}
Indeed, the $\zeta_{n,x}$ are distinct zeros of $P_n$, so the sum on the left
contains every term in the sum on the right, while all its remaining terms are
nonnegative since $g_K\ge0$.  Jensen's inequality, \eqref{GrFn}, and
\eqref{selected-zero-sum-eq} now give
\begin{align*}
    t_{p,n}(K,\mu)^p
    &\ge
    \int_{K_0}|P_n|^p w\,d\mu_K\\
    &\ge
    S(K,w)\exp\left(p\int_{K_0}\log|P_n|\,d\mu_K\right)\\
    &=
    \ca(K)^{np}S(K,w)
    \exp\left(p\sum_{P_n(\zeta)=0}g_K(\zeta)\right)\\
    &\ge
    \ca(K)^{np}S(K,w)
    \exp\left(p\sum_{x\in F}g_K(\zeta_{n,x})\right).
\end{align*}
Since $g_K=g_{K_0}$ is continuous near every point of $F$, it follows that
\begin{equation*}
    \liminf_{n\rightarrow\infty}W_{p,n}(K,\mu)
    \ge
    S_p(K,w)\exp\left(\sum_{x\in F}g_K(x)\right).
\end{equation*}
Taking the supremum over finite $F\subset X$ proves \eqref{main-lower-eq} for
$0<p<\infty$.

For $p=\infty$, let $1\le q<\infty$ and set
\begin{equation*}
    d\nu_q
    =
    w^q\,d\mu_K+\sum_j 2^{-j}w(x_j)^q\,d\delta_{x_j}.
\end{equation*}
Since $w$ is bounded on $K$, the measure $d\nu_q$ is finite.  Since
$S(K,w)>0$, we have $w>0$ $\mu_K$-a.e., so $w^q\,d\mu_K$ has support
$K_0$.  Moreover, since $\supp(w)=K$ and every $x_j$ is isolated in $K$,
we have $w(x_j)>0$.  Thus every $x_j\in X$ is an atom of $d\nu_q$ and
$\supp(d\nu_q)=K$.  Since $\mu_K(K_0)=1$ and
$|wP|\le\|wP\|_K$ on $K$, every monic polynomial $P$ satisfies
\begin{align*}
    \left(\int_K |P|^q\,d\nu_q\right)^{1/q}
    &=
    \left(
        \int_{K_0}|wP|^q\,d\mu_K
        +\sum_j 2^{-j}|w(x_j)P(x_j)|^q
    \right)^{1/q}\\
    &\le
    \left(1+\sum_j 2^{-j}\right)^{1/q}\|wP\|_K\\
    &\le
    2^{1/q}\|wP\|_K.
\end{align*}
The absolutely continuous part of $d\nu_q$ has density $w^q$, and hence
$S_q(K,w^q)=S(K,w)$.  Applying the already proved case $0<p<\infty$ of
\eqref{main-lower-eq} to the measure $d\nu_q$ with $p=q$ gives
\begin{equation*}
    2^{1/q}\liminf_{n\rightarrow\infty}W_{\infty,n}(K,w)
    \ge
    \liminf_{n\rightarrow\infty}W_{q,n}(K,\nu_q)
    \ge
    S(K,w)e^{\BC(K)}.
\end{equation*}
Letting $q\rightarrow\infty$ proves the lower estimate for $p=\infty$.

We now prove the upper estimate.  If $\BC(K)=\infty$, then the standing hypothesis gives $S(K,w)>0$, so the right-hand side of
\eqref{main-upper-eq} is $+\infty$ and there is nothing to prove.  Hence assume that $\BC(K)<\infty$.  For $p=\infty$, the assertion follows directly from Theorem~\ref{cheb-upper-thm}.

Let $0<p<\infty$.  Given $\eps>0$, choose $v$ from
Proposition~\ref{Lp-majorant-lem}.  By \eqref{majorant-norm-eq}, for every $n$,
\begin{equation*}
    t_{p,n}(K,\mu)^p
    =
    \inf_{\substack{P\ {\rm monic}\\ \deg P=n}}
    \int_K|P|^p\,d\mu
    \le
    \inf_{\substack{P\ {\rm monic}\\ \deg P=n}}
    \|vP\|_K^p
    =t_n(K,v)^p.
\end{equation*}
Taking $p$th roots and dividing by $\ca(K)^n$ gives
$W_{p,n}(K,\mu)\le W_{\infty,n}(K,v)$.  Therefore
Theorem~\ref{cheb-upper-thm} and \eqref{majorant-S-eq} yield
\begin{align*}
    \limsup_{n\rightarrow\infty}W_{p,n}(K,\mu)
    &\le
    \limsup_{n\rightarrow\infty}W_{\infty,n}(K,v)\\
    &\le
    2e^{\PW(K)+\BC(K)}S(K,v)\\
    &\le
    2e^{\PW(K)+\BC(K)}\bigl(S(K,w)+\eps\bigr)^{1/p}.
\end{align*}
Letting $\eps\downarrow0$ proves \eqref{main-upper-eq}.
\end{proof}

\section{Szeg\H{o} Theorems}\label{sec-szego-theorems}

For probability measures $\rho$ and $\tau$ on a compact interval, define
the entropy functional by
$D(\rho\mid\tau)=-\int\log\!\left(\frac{d\rho}{d\tau}\right)\,d\rho$
when $\rho\ll\tau$, and put
$D(\rho\mid\tau)=-\infty$ otherwise.  This is the quantity denoted by
$S(\rho\mid\tau)$ in \cite{Sim11}.  It satisfies $D(\rho\mid\tau)\le0$ and
is jointly upper semicontinuous under weak convergence
\cite[Proposition~2.2.2 and Theorem~2.2.3, pp.~49--50]{Sim11}.

\subsection{Consequences of the upper and lower bounds}

We use the conditions introduced in \eqref{intro-four-properties}.  The lower
and upper estimates in Theorem~\ref{main-estimates-thm} have different
hypotheses, so we first record their consequences separately.

\begin{theorem}\label{szego-implications-thm}
Let $0<p\le\infty$ and let $K=K_0\cup X$ satisfy
\eqref{support-main-assumptions}. For $0<p<\infty$, let $d\mu$ be a finite positive Borel measure with $\supp(d\mu)=K$ given by \eqref{mu-main-decomp}.  When $p=\infty$, let $w$ be a bounded weight with support $K$.
\begin{enumerate}[\quad$(a)$]
\item If $(S)$ holds, then $(L_p)$ holds.  If, in addition, $(B)$ fails, then
$W_{p,n}\rightarrow\infty$.  Consequently,
$(S)+(U_p)\Rightarrow(B)$.
\item Assume additionally, when $p=\infty$, that $w$ is upper semicontinuous
on $K$.  If $(P)$ and $(B)$ hold, then $(U_p)$ holds.  If, in addition,
$(S)$ fails, then $W_{p,n}\rightarrow0$.  Consequently,
$(P)+(B)+(L_p)\Rightarrow(S)$.
\end{enumerate}
\end{theorem}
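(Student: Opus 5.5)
This theorem is a direct reading of Theorem~\ref{main-estimates-thm}; the work is to check in each case that its hypotheses hold and that the right-hand sides have the stated signs. For every $0<p\le\infty$, $S(K,w)>0$ is equivalent to $S_p(K,w)>0$, since $S_p$ is either $S$ or $S^{1/p}$.

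For part (a), assume $(S)$. Then the hypothesis ``at least one of $S(K,w)>0$, $\BC(K)<\infty$'' of Theorem~\ref{main-estimates-thm} holds. For finite $p$, the measure $\mu$ is as in \eqref{mu-main-decomp}. For $p=\infty$, $w$ is bounded with $\supp(w)=K$. So \eqref{main-lower-eq} applies and gives
\[
    \ul W_p(K,\mu)\ge S_p(K,w)e^{\BC(K)}.
\]
Since $S_p(K,w)>0$ and $e^{\BC(K)}\ge1$, the right-hand side is positive, which is $(L_p)$. If $(B)$ fails, then $\BC(K)=\infty$ and the right-hand side equals $+\infty$ under the extended-real convention $s\cdot(+\infty)=+\infty$ for $s>0$. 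Hence the lower limit is infinite and $W_{p,n}\to\infty$. In particular $\ol W_p=\infty$, so $(U_p)$ fails. Taking the contrapositive within $(S)$ gives $(S)+(U_p)\Rightarrow(B)$.

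For part (b), assume $(P)$ and $(B)$, and, when $p=\infty$, that $w$ is upper semicontinuous. Then $\BC(K)<\infty$ supplies the standing hypothesis, and $\PW(K)<\infty$ holds, so \eqref{main-upper-eq} gives
\[
    \ol W_p(K,\mu)\le 2e^{\PW(K)+\BC(K)}S_p(K,w).
\]
The weight contributes a finite factor: for finite $p$, Jensen gives $S(K,w)\le\int w\,d\mu_K\le\mu(K)<\infty$, and for $p=\infty$, $S(K,w)\le\|w\|_K$. The other factors are finite, so the right-hand side is finite and $(U_p)$ follows. If $(S)$ fails, then $S_p(K,w)=0$ and the right-hand side is $0$, a genuine product of finite numbers. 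Hence $\limsup W_{p,n}\le0$, and since $W_{p,n}\ge0$, we get $W_{p,n}\to0$. This contradicts $(L_p)$, which yields $(P)+(B)+(L_p)\Rightarrow(S)$.

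There is no real obstacle here. The only points needing care are the extended-real bookkeeping, so that no $0\cdot\infty$ form arises (this is guaranteed because the hypotheses of each case fix which factor is finite), and checking finiteness of $S(K,w)$ in part (b).
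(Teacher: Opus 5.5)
Your proposal is correct and follows the paper's proof: $(L_p)$ and divergence to infinity are read off from the lower estimate \eqref{main-lower-eq}, $(U_p)$ and convergence to zero from the upper estimate \eqref{main-upper-eq}, and the two ``consequently'' statements are the corresponding contrapositives. Your Jensen check that $S(K,w)<\infty$ in part (b) is a detail the paper leaves implicit.
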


\begin{proof}
Assume $(S)$.  The lower estimate \eqref{main-lower-eq} gives
\begin{equation*}
    \ul W_p(K,\mu)\ge S_p(K,w)e^{\BC(K)}.
\end{equation*}
The right-hand side is positive when $\BC(K)<\infty$ and is $+\infty$ when
$\BC(K)=\infty$.  Thus $(L_p)$ always holds, and failure of $(B)$ gives
$W_{p,n}(K,\mu)\rightarrow\infty$.  The latter conclusion is incompatible
with $(U_p)$, proving the final assertion in (a).

Now assume $(P)+(B)$.  The upper estimate \eqref{main-upper-eq} gives
\begin{equation*}
    \ol W_p(K,\mu)
    \le 2e^{\PW(K)+\BC(K)}S_p(K,w)<\infty,
\end{equation*}
so $(U_p)$ holds.  If $(S)$ fails, the right-hand side is zero.  Since the
Widom factors are nonnegative, $W_{p,n}(K,\mu)\rightarrow0$.  This is
incompatible with $(L_p)$ and proves the final assertion in (b).
\end{proof}

\begin{corollary}\label{3szego-thm}
Assume $(P)$ and the hypotheses of
Theorem~\ref{szego-implications-thm}.  When $p=\infty$, assume also that $w$ is
upper semicontinuous on $K$.  If at least one of $(B)$ and $(S)$ holds,
exactly one of the following three cases occurs:
\begin{enumerate}[\quad$(a)$]
\item $(B)$ and $(S)$ hold, and then both $(U_p)$ and $(L_p)$ hold.
\item $(B)$ holds and $(S)$ fails, and then
$W_{p,n}(K,\mu)\rightarrow0$.
\item $(S)$ holds and $(B)$ fails, and then
$W_{p,n}(K,\mu)\rightarrow\infty$.
\end{enumerate}
The conclusion in {\rm(c)} requires neither $(P)$ nor, when $p=\infty$,
upper semicontinuity of $w$.  Under $(P)+(B)$, the conditions $(S)$, $(L_p)$,
and $(W_p)$ are equivalent, while under $(P)+(S)$, the conditions $(B)$,
$(U_p)$, and $(W_p)$ are equivalent.
\end{corollary}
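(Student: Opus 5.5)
The corollary is a case analysis built on Theorem~\ref{szego-implications-thm}. The condition ``at least one of $(B)$, $(S)$ holds'' splits into three mutually exclusive possibilities: both hold; $(B)$ holds and $(S)$ fails; $(S)$ holds and $(B)$ fails. So exactly one of (a)--(c) occurs, and it remains to verify the stated conclusion in each case.

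\emph{Case (a).} With $(P)+(B)$, and upper semicontinuity when $p=\infty$, part (b) of Theorem~\ref{szego-implications-thm} gives $(U_p)$. With $(S)$, part (a) gives $(L_p)$.

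\emph{Case (b).} The second assertion of part (b) of the theorem gives $W_{p,n}\rightarrow0$.

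\emph{Case (c).} The second assertion of part (a) gives $W_{p,n}\rightarrow\infty$. Part (a) uses only the lower estimate \eqref{main-lower-eq}, which needs neither $(P)$ nor upper semicontinuity of $w$. This justifies the remark that (c) is independent of those hypotheses.

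\emph{Equivalences under $(P)+(B)$.} First, $(S)\Rightarrow(L_p)$ by case (a). Next, $(L_p)\Rightarrow(W_p)$: we have $(U_p)$ from part (b), and $\ol W_p\ge\ul W_p>0$. Finally, $(W_p)\Rightarrow(S)$: if $(S)$ failed, case (b) would give $W_{p,n}\rightarrow0$, hence $\ol W_p=0$, contradicting $(W_p)$.

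\emph{Equivalences under $(P)+(S)$.} First, $(B)\Rightarrow(U_p)$ by case (a). Next, $(U_p)\Rightarrow(W_p)$, since $(S)$ gives $(L_p)$ and therefore $\ol W_p>0$. Finally, $(W_p)\Rightarrow(B)$: if $(B)$ failed, case (c) would give $W_{p,n}\rightarrow\infty$, contradicting $\ol W_p<\infty$.

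There is no real obstacle here. The only point needing care is bookkeeping: the upper-semicontinuity hypothesis is used only where part (b) of the theorem is invoked, and $(W_p)$ is used in the form $\ol W_p>0$ (which rules out convergence to $0$) or $\ol W_p<\infty$ (which rules out divergence to $\infty$).
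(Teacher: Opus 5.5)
Your proposal is correct and follows the paper's proof: the trichotomy comes from the two parts of Theorem~\ref{szego-implications-thm}, and the equivalences under $(P)+(B)$ and under $(P)+(S)$ follow from cases (a) and (b), and from cases (a) and (c), respectively. You make explicit the bookkeeping the paper leaves implicit, such as $\ol W_p\ge\ul W_p>0$ and where upper semicontinuity is actually used.
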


\begin{proof}
The three cases follow from the two parts of
Theorem~\ref{szego-implications-thm}.  Under $(P)+(B)$, cases (a) and (b)
give the first set of equivalences.  Under $(P)+(S)$, cases (a) and (c) give
the second.
\end{proof}

For either choice $W_p^\star\in\{\ul W_p,\ol W_p\}$, the same trichotomy shows
that, under $(P)$, any two of $(B)$, $(S)$, and
$0<W_p^\star(K,\mu)<\infty$ imply the third.

\subsection{A converse estimate for the Parreau--Widom sum}

In this subsection we prove that, for $2\le p\le\infty$,
$(S)+(U_p)\Rightarrow(P)$ and establish
the quantitative estimate \eqref{unconditional-pw-bound}. We first bound
the Parreau--Widom sum by the relative entropy of $\mu_K$ with respect to
normalized Lebesgue measure on $\cvh(K_0)$.  We then approximate this entropy by dyadic
partitions of $K_0$ into sets of equal $\mu_K$-measure.  These partition
estimates will ultimately give a lower bound for averages of the logarithms
of the $L^2$ Widom factors.

\begin{lemma}\label{equilibrium-entropy-bound}
Let $[A,B]=\cvh(K_0)$, let $L=B-A$, and let $\lambda$ be normalized
Lebesgue measure on $[A,B]$.  Then
\begin{equation}\label{equilibrium-entropy-bound-eq}
    -D(\mu_K\mid\lambda)
    \ge \PW(K)+\log\frac{L}{\pi\ca(K)}.
\end{equation}
\end{lemma}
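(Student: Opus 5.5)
The plan is to compute $-D(\mu_K\mid\lambda)$ explicitly from a product (exponential) representation of the density of $\mu_K$. In the finite-gap case this gives an identity, and in general the estimate follows by a limiting argument. Write $\rho=d\mu_K/dx$ on $K_0$. Since $\lambda=dx/L$ on $[A,B]$, we have $d\mu_K/d\lambda=L\rho$, so
\begin{equation*}
    -D(\mu_K\mid\lambda)=\log L+\int_{K_0}\log\rho\,d\mu_K .
\end{equation*}
It therefore suffices to show $\int\log\rho\,d\mu_K\ge \PW(K)-\log\pi-\log\ca(K)$.

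\emph{Finite-gap case.} Let $K_0=[A,B]\bs\bigcup_{\ell=1}^N(\alpha_\ell,\beta_\ell)$. The complex Green function satisfies
\begin{equation*}
    \partial g_K/\partial z=\tfrac12\,\frac{\prod_\ell(z-c_\ell)}{\sqrt{(z-A)(z-B)\prod_\ell(z-\alpha_\ell)(z-\beta_\ell)}} .
\end{equation*}
Taking boundary values gives, on $K_0$,
\begin{equation*}
    \rho(x)=\frac1\pi\,\frac{\prod_\ell|x-c_\ell|}{\sqrt{|(x-A)(x-B)|\prod_\ell|x-\alpha_\ell||x-\beta_\ell|}} .
\end{equation*}
Integrate $\log\rho$ against $\mu_K$ and use \eqref{GrFn}: $\int\log|x-\zeta|\,d\mu_K(x)=\log\ca(K)+g_K(\zeta)$. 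This yields $\log\ca(K)+g_K(c_\ell)$ for each critical point and $\log\ca(K)$ for each of the $2N+2$ endpoints, since these are regular points of $K_0$. The numerator contributes $N\log\ca(K)+\PW(K)$ and the denominator $-(N+1)\log\ca(K)$. Hence
\begin{equation*}
    \int\log\rho\,d\mu_K=\PW(K)-\log\pi-\log\ca(K),
\end{equation*}
so \eqref{equilibrium-entropy-bound-eq} holds with equality.

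\emph{General regular $K_0$.} Here upper semicontinuity of $D$ goes the wrong way for approximation, since it gives $-D(\mu_K\mid\lambda)\le\liminf$. I would therefore work directly with the general exponential representation of $\partial g_K/\partial z$ for regular compact subsets of $\bb R$ (Sodin--Yuditskii, Christiansen):
\begin{equation*}
    \log\bigl(\pi\rho(x)\sqrt{|(x-A)(x-B)|}\bigr)
    =\sum_\ell h_\ell(x),\qquad
    h_\ell(x)=\log|x-c_\ell|-\tfrac12\log|x-\alpha_\ell|-\tfrac12\log|x-\beta_\ell|,
\end{equation*}
for $\mu_K$-a.e.\ $x\in K_0$. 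Equivalently, the exponent is $\int\chi(t)(t-x)^{-1}\,dt$ with $\chi=\pm\frac12$ on the two halves $(\alpha_\ell,c_\ell)$ and $(c_\ell,\beta_\ell)$ of each gap, after rescaling the sign convention. Exactly as above, $\int h_\ell\,d\mu_K=g_K(c_\ell)$ for each $\ell$, and the outer factor integrates to $-\log\pi-\log\ca(K)$. The claim, including the case $\PW(K)=\infty$, where the left side must be $+\infty$, then reduces to the interchange
\begin{equation*}
    \int\sum_\ell h_\ell\,d\mu_K\ \ge\ \sum_\ell\int h_\ell\,d\mu_K .
\end{equation*}

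I expect this interchange to be the main obstacle, because $h_\ell$ changes sign on $K_0$. The first approach I would try is to write $h_\ell(x)=\int_{\alpha_\ell}^{\beta_\ell}\chi_\ell(t)(t-x)^{-1}\,dt$ and split each gap into its two halves. For $x\in K_0$ each half-integral has a fixed sign determined by the side of the gap on which $x$ lies. Tonelli then applies separately to the positive and negative parts, after pairing with $\int\log|x-t|\,d\mu_K(x)=\log\ca(K)+g_K(t)$, which is monotone on each half-gap. If full interchange fails, I would instead aim for a Fatou-type inequality in the needed direction. For this I would use that the partial sums $\sum_{\ell\le N}h_\ell$ equal $\log(\pi\rho_N\sqrt{|(x-A)(x-B)|})$ for the finite-gap sets $K_N=[A,B]\bs\bigcup_{\ell\le N}(\alpha_\ell,\beta_\ell)\supset K_0$, and control their negative parts uniformly by comparison with the fixed first gaps.
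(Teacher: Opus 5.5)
Your finite-gap computation is correct, and it is the same computation the paper uses. The gap is in the general case. There you push everything into the interchange $\int\sum_\ell h_\ell\,d\mu_K\ge\sum_\ell\int h_\ell\,d\mu_K$, which you leave unproved. It also rests on an infinite-product representation of $\rho$ for arbitrary regular $K_0$, with $\sum_\ell h_\ell(x)$ converging $\mu_K$-a.e., which you only cite loosely. Neither of your proposed routes closes this:
\begin{itemize}
\item Splitting each gap into halves lets Tonelli act on the positive and negative parts separately. To subtract them, however, you need $\sum_\ell\int h_\ell^-\,d\mu_K<\infty$. That is essentially control of $\int_{K_0}\int_{\bigcup_\ell(\alpha_\ell,\beta_\ell)}|t-x|^{-1}\,dt\,d\mu_K(x)$, which nothing in the hypotheses provides, least of all when $\PW(K)=\infty$.
\item In the Fatou route, the partial sums $\sum_{\ell\le N}h_\ell$ are not $\log\bigl(\pi\rho_N\sqrt{|(x-A)(x-B)|}\bigr)$. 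The density of $\mu_{K_N}$ involves the critical points $c_{\ell,N}$ of $g_{K_N}$, not the points $c_\ell$ of $g_K$.
\end{itemize}
You should also dispose of the case $\mu_K\not\ll dx$ separately; there $D(\mu_K\mid\lambda)=-\infty$ and there is nothing to prove.

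The missing idea is that no interchange is needed. Vary the reference measure rather than the first argument of $D$, and use only $D\le0$. Put $K_{0,m}=[A,B]\setminus\bigcup_{j\le m}(\alpha_j,\beta_j)\supset K_0$. If $\mu_K\ll dx$, then $\mu_K\ll\mu_{K_{0,m}}$, because the finite-gap density is positive a.e.\ on $K_{0,m}$, and
\[
0\ge D(\mu_K\mid\mu_{K_{0,m}})=D(\mu_K\mid\lambda)+\int\log\frac{d\mu_{K_{0,m}}}{d\lambda}\,d\mu_K .
\]
The last integral is your finite-gap calculation, but with the explicit density of $\mu_{K_{0,m}}$ integrated against $\mu_K$. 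The endpoints lie in the regular set $K_0$, and $\int\log|x-\zeta|\,d\mu_K(x)=\log\ca(K)+g_K(\zeta)$. Hence the integral equals $\log\frac{L}{\pi\ca(K)}+\sum_{j\le m}g_K(c_{j,m})$, which gives
\[
-D(\mu_K\mid\lambda)\ge\log\frac{L}{\pi\ca(K)}+\sum_{j\le m}g_K(c_{j,m})\quad\text{for every } m.
\]
It remains to show $c_{j,m}\to c_j$. This follows from $g_{K_{0,m}}\uparrow g_K$, which is locally uniform on each gap by Harnack, together with uniqueness of the maximizer. Then keep the first $r$ terms, let $m\to\infty$, and finally let $r\to\infty$; this covers $\PW(K)=\infty$ automatically. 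Your Fatou idea, with the partial sums correctly identified as $\log\rho_N$, is exactly this Gibbs inequality $\int\log(\rho/\rho_N)\,d\mu_K\ge0$. It needs no uniform control of negative parts.
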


\begin{proof}
If $D(\mu_K\mid\lambda)=-\infty$, there is nothing to prove.  Otherwise,
$\mu_K$ is absolutely continuous with respect to Lebesgue measure.  Enumerate
the bounded gaps of $K_0$ and put
$K_{0,m}=[A,B]\bs\bigcup_{j=1}^m(\alpha_j,\beta_j)$, where $m$ does not
exceed the number of gaps and the empty union is allowed.  If $c_{j,m}$ is
the critical point of $g_{K_{0,m}}$ in the $j$th gap, then the finite-gap
equilibrium density is
\begin{equation}\label{finite-gap-density}
    \frac{d\mu_{K_{0,m}}}{dx}(x)
    =\frac1\pi
      \frac{\prod_{j=1}^m|x-c_{j,m}|}
      {\sqrt{|(x-A)(x-B)\prod_{j=1}^m
      (x-\alpha_j)(x-\beta_j)|}}
\end{equation}
for Lebesgue-a.e. $x\in K_{0,m}$ \cite[(5.4.96), p.~280 and
Theorem~5.5.22(iii)--(iv), pp.~302--303]{Sim11}.  This density is positive
Lebesgue-a.e. on $K_{0,m}$.  Since $\mu_K$ is supported on $K_0$ and is
absolutely continuous with respect to Lebesgue measure,
$\mu_K\ll\mu_{K_{0,m}}$.  Every endpoint in
\eqref{finite-gap-density} belongs to $K_0$.  Since
$\frac{d\mu_{K_{0,m}}}{d\lambda}
=L\frac{d\mu_{K_{0,m}}}{dx}$ $\lambda$-a.e., \eqref{GrFn} gives
\begin{equation}\label{finite-gap-log-density}
    \int_{K_0}\log\frac{d\mu_{K_{0,m}}}{d\lambda}\,d\mu_K
    =\log\frac{L}{\pi\ca(K)}+\sum_{j=1}^m g_K(c_{j,m}).
\end{equation}
Since $\mu_K\ll\mu_{K_{0,m}}\ll\lambda$, we have
$\frac{d\mu_K}{d\lambda}
=\frac{d\mu_K}{d\mu_{K_{0,m}}}
 \frac{d\mu_{K_{0,m}}}{d\lambda}$ $\mu_K$-a.e.  The definition of $D$ and
\eqref{finite-gap-log-density} therefore give
\begin{equation*}
    D(\mu_K\mid\mu_{K_{0,m}})
    =D(\mu_K\mid\lambda)+\log\frac{L}{\pi\ca(K)}
      +\sum_{j=1}^m g_K(c_{j,m})\le0.
\end{equation*}
Therefore
\begin{equation}\label{finite-gap-entropy-bound}
    -D(\mu_K\mid\lambda)
    \ge \log\frac{L}{\pi\ca(K)}+\sum_{j=1}^m g_K(c_{j,m}).
\end{equation}
If $K_0$ has finitely many gaps, take all of them in
\eqref{finite-gap-entropy-bound}.  Otherwise,
\cite[Corollary~4.4.5 and Theorem~4.4.6, pp.~108--109]{Ran95} gives
$g_{K_{0,m}}\uparrow g_K$ on every fixed gap.  Harnack's theorem makes this
convergence locally uniform there
\cite[Theorem~1.3.9, p.~16]{Ran95}.  Fix $j$.  Since $g_K$ vanishes at the
endpoints of the $j$th gap and $g_K(c_j)>0$, there is a compact interval
$I_j$ contained in the gap such that $g_K<g_K(c_j)/2$ outside $I_j$.  Since
$g_{K_{0,m}}\le g_K$ and
$g_{K_{0,m}}(c_j)\rightarrow g_K(c_j)$, the maximizer $c_{j,m}$ belongs to
$I_j$ for all sufficiently large $m$.  Local uniform convergence on $I_j$
and the uniqueness of the maximizer of $g_K$ give
$c_{j,m}\rightarrow c_j$.
For $m\ge r$ with $r\ge1$ fixed, retain only the first $r$ terms in
\eqref{finite-gap-entropy-bound} and let $m\rightarrow\infty$.  Since
$c_{j,m}\rightarrow c_j$ for $1\le j\le r$, we obtain
\begin{equation*}
    -D(\mu_K\mid\lambda)
    \ge \log\frac{L}{\pi\ca(K)}
       +\sum_{j=1}^r g_K(c_j).
\end{equation*}
Then letting $r\rightarrow\infty$ proves \eqref{equilibrium-entropy-bound-eq}.
\end{proof}

Let $F(x)=\mu_K([A,x])$ on $[A,B]$.  For a finite Borel measure $\rho$ on
$[A,B]$, let $F_*\rho$ be the measure on $[0,1]$ defined by
$(F_*\rho)(Y)=\rho(F^{-1}(Y))$ for every Borel set $Y\subset[0,1]$.
Since $\mu_K$ has finite logarithmic energy, it does not charge polar sets
\cite[Theorem~3.2.3, p.~56]{Ran95}.  In particular, it has no atoms, so $F$
is continuous.  Since $F(A)=0$ and $F(B)=1$, it maps $[A,B]$ onto $[0,1]$.
Define the right-endpoint quantile map $q:[0,1]\rightarrow[A,B]$ by
\[
q(t)=\max F^{-1}(\{t\}), \quad 0\le t\le1.
\]
The level set $F^{-1}(\{t\})$ is nonempty and compact, so $q(t)$ is
well defined. The non-singleton fibers of $F$ are precisely the closures of the bounded gaps of $K_0$. On such a fiber, $q$ selects its right endpoint.
Then $F^{-1}([0,t])=[A,q(t)]$ and
$(F_*\mu_K)([0,t])=\mu_K([A,q(t)])=F(q(t))=t$.  Since finite Borel measures
on $[0,1]$ are determined by the intervals $[0,t]$, $F_*\mu_K$ is Lebesgue
measure on $[0,1]$.  If $s<t$, then $q(s)<q(t)$, so $q$ is increasing and
therefore Borel.  For $N\ge2$, put
\begin{equation*}
    J_{1,N}=[0,1/N],
    \qquad
    J_{k,N}=((k-1)/N,k/N],\quad 2\le k\le N,
\end{equation*}
and put
\begin{equation*}
    q_{0,N}=A,
    \qquad
    q_{N,N}=B,
    \qquad
    q_{k,N}=q(k/N),\quad 1\le k\le N-1.
\end{equation*}
Set
\begin{equation*}
\begin{aligned}
    K_{k,N}&=K_0\cap F^{-1}(J_{k,N}),
    &\ell_{k,N}&=q_{k,N}-q_{k-1,N}.
\end{aligned}
\end{equation*}
We have $F^{-1}(J_{1,N})=[A,q_{1,N}]$ and
$F^{-1}(J_{k,N})=(q_{k-1,N},q_{k,N}]$ for $k\ge2$.  Consequently,
$\ell_{k,N}>0$ and
$K_{k,N}\subset[q_{k-1,N},q_{k,N}]$.  The definition of the pushforward
and the fact that $F_*\mu_K$ is Lebesgue measure give
\begin{equation}\label{partition-pushforward-masses}
    (F_*\mu_K)(J_{k,N})=\mu_K(K_{k,N})=\frac1N,
    \qquad
    (F_*\lambda)(J_{k,N})=\frac{\ell_{k,N}}L.
\end{equation}
Define
\begin{equation}\label{partition-entropy-def}
\begin{aligned}
    H_N
    &=\sum_{k=1}^N(F_*\mu_K)(J_{k,N})
      \log\frac{(F_*\mu_K)(J_{k,N})}
      {(F_*\lambda)(J_{k,N})}\\
    &=\frac1N\sum_{k=1}^N
      \log\frac{1/N}{\ell_{k,N}/L}.
\end{aligned}
\end{equation}

\begin{lemma}\label{partition-entropy-lem}
Along $N=2^m$,
\begin{equation}\label{partition-entropy-limit}
    H_{2^m}\uparrow-D(\mu_K\mid\lambda).
\end{equation}
\end{lemma}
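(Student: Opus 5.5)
The idea is to pass to the pushforward picture on $[0,1]$, where $H_N$ becomes a partition relative entropy, and then use the standard martingale/partition characterization of relative entropy. Write $\tilde\mu=F_*\mu_K$, which is Lebesgue measure on $[0,1]$, and $\tilde\lambda=F_*\lambda$. By \eqref{partition-entropy-def}, $H_N$ is the relative entropy of $\tilde\mu$ with respect to $\tilde\lambda$ computed on the finite partition $\mathcal P_N=\{J_{k,N}\}$:
\[
H_N=-D(\tilde\mu|_{\mathcal P_N}\mid\tilde\lambda|_{\mathcal P_N}).
\]
The plan has three parts: (i) show $H_{2^m}$ is nondecreasing; (ii) show $-D(\tilde\mu\mid\tilde\lambda)$ equals the limit of these partition entropies; (iii) show $D(\tilde\mu\mid\tilde\lambda)=D(\mu_K\mid\lambda)$.

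For (i), note that the dyadic partitions $\mathcal P_{2^m}$ refine as $m$ increases. Monotonicity then follows from convexity of $(a,b)\mapsto a\log(a/b)$ (the log-sum inequality). Merging two cells $J_{2k-1,2N},J_{2k,2N}$ into $J_{k,N}$ can only decrease $\sum a\log(a/b)$. Cells with $\ell_{k,N}=0$ do not occur, since $\ell_{k,N}>0$.

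For (ii), the dyadic partitions generate the Borel $\sigma$-algebra of $[0,1]$. I would invoke the standard fact that relative entropy is the supremum over finite partitions, and is the increasing limit along a generating refining sequence. Concretely, if $\tilde\mu\ll\tilde\lambda$, the ratios $\tilde\mu(J)/\tilde\lambda(J)$ form a martingale converging $\tilde\lambda$-a.e. to $d\tilde\mu/d\tilde\lambda$. If not, $\tilde\mu$ has a singular part, and one can trap a set of positive $\tilde\mu$-measure and tiny $\tilde\lambda$-measure inside a finite union of dyadic cells, which makes $H_N\to+\infty$. I would cite \cite[Theorem~2.2.3]{Sim11} (the partition/sup characterization together with upper semicontinuity) rather than redo the martingale argument. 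I would also handle convergence along the dyadic sequence via lower semicontinuity of the supremum: any finite Borel partition can be approximated by dyadic unions.

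Step (iii) is the main obstacle: the pushforward must not lose information. For this I would use that $F$ is injective off the fibers, which are the closures of gaps, and these are $\mu_K$-null. The fibers are, however, not $\lambda$-null. The key observation is that the gap intervals are mapped to single points $t$, each carrying $\tilde\lambda$-atoms of mass $(\beta_\ell-\alpha_\ell)/L$ and $\tilde\mu$-mass $0$. So suppose first that $\mu_K\ll\lambda$. Then on $[A,B]$ minus the gaps, $F$ is a bijection onto $[0,1]$ minus countably many points, with measurable inverse $q$. There the density transfers exactly: $d\tilde\mu/d\tilde\lambda\circ F=d\mu_K/d\lambda$ $\mu_K$-a.e. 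Hence the two entropies coincide, since $\tilde\mu$ does not see the atoms. Conversely, a $\lambda$-null set $E\subset K_0$ with $\mu_K(E)>0$ maps to $F(E)$ with $\tilde\lambda(F(E))=\lambda(F^{-1}F(E))=\lambda(E)=0$, up to gap endpoints, which are countable. Therefore $\tilde\mu\not\ll\tilde\lambda$, and both sides equal $-\infty$. Combining (i)–(iii) gives \eqref{partition-entropy-limit}.
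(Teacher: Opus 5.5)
Your proposal is correct. Step (iii) is essentially the paper's argument, but steps (i)--(ii) take a different route.

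\textbf{Step (iii).} The paper sets $h=0$ on the gap closures $F^{-1}(\mathcal P)$, so that $h=\tilde h\circ F$ with $\tilde h=h\circ q$. In the singular case it takes $Z$ off the gap closures and notes that $F(Z)=q^{-1}(Z)$. This also gives the Borel measurability of $F(E)$, which you left implicit.

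\textbf{Steps (i)--(ii).} The paper uses neither the log-sum inequality nor martingales. It starts from the variational formula $D(\tilde\mu\mid\tilde\lambda)=\inf\Psi(f)$ over positive continuous $f$.
\begin{enumerate}
\item[(1)] The infimum of $\Psi$ over positive functions constant on the dyadic cells is exactly $-H_{2^m}$. Monotonicity then comes from the nesting of these classes, replacing your log-sum inequality.
\item[(2)] Jensen's inequality on each cell gives $D\le -H_{2^m}$.
\item[(3)] Uniformly approximating a positive continuous $f$ by dyadic step functions gives the reverse inequality in the limit.
\end{enumerate}
This is elementary once the variational formula is available, and it treats the absolutely continuous and singular cases at once.

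Your route is the general Dobrushin-type statement, valid for any refining generating sequence of partitions. The price is martingale convergence, plus Fatou and conditional Jensen to pass to the limit in $\int r_m\log r_m\,d\tilde\lambda$, and a separate trapping argument for the singular part.

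\textbf{On your citation.} Simon's Theorem~2.2.3, as the paper uses it, is the joint upper semicontinuity of relative entropy, not a partition formula. It nevertheless suffices on its own. Let $r_m$ be the step function equal to $\tilde\mu(J)/\tilde\lambda(J)$ on each cell $J=J_{k,2^m}$; these ratios are finite since $\ell_{k,2^m}>0$. Put $d\mu_m=r_m\,d\tilde\lambda$. Then $D(\mu_m\mid\tilde\lambda)=-H_{2^m}$. Also $\mu_m\rightarrow\tilde\mu$ weakly, because the two measures agree on every cell of mesh $2^{-m}$, so $|\int f\,d\mu_m-\int f\,d\tilde\mu|$ is at most twice the modulus of continuity of $f$ at $2^{-m}$. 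Upper semicontinuity therefore gives $\limsup_m(-H_{2^m})\le D(\tilde\mu\mid\tilde\lambda)$. This holds whether or not $\tilde\mu\ll\tilde\lambda$, so no martingale or trapping argument is needed.
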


\begin{proof}
We first show that mapping $[A,B]$ to $[0,1]$ by $F$ preserves relative
entropy, even though every bounded gap is collapsed to a point.  We then
show that $H_{2^m}$ is the relative entropy seen at the level of the
dyadic partition $\{J_{k,2^m}\}_{k=1}^{2^m}$ and let the mesh of these
partitions tend to zero.

Let $(\alpha_\ell,\beta_\ell)$ range over the bounded gaps of $K_0$, and put
$\mathcal P=\{\mu_K([A,\alpha_\ell])\}_\ell$.  These are the harmonic measure
frequencies associated with the gaps, measured from the left.  Since
$\supp(d\mu_K)=K_0$ and $\mu_K$ has no atoms, we have
$\mathcal P\subset(0,1)$,
$F^{-1}(\mathcal P)=\bigcup_\ell[\alpha_\ell,\beta_\ell]$, and
$\mu_K(F^{-1}(\mathcal P))=0$.

Suppose first that $\mu_K\ll\lambda$, and let
$h=\frac{d\mu_K}{d\lambda}$.  Since $\mu_K(F^{-1}(\mathcal P))=0$, we may
take $h=0$ on $F^{-1}(\mathcal P)$.  Put
$\widetilde h=h\circ q$.  If $x\notin F^{-1}(\mathcal P)$, then
$q(F(x))=x$.  If $x\in F^{-1}(\mathcal P)$, then
$q(F(x))\in F^{-1}(\mathcal P)$, so $h(x)=h(q(F(x)))=0$.  Hence
$h=\widetilde h\circ F$.  Therefore, for every Borel set $Y\subset[0,1]$,
$(F_*\mu_K)(Y)=\mu_K(F^{-1}(Y))
=\int_{F^{-1}(Y)}h(x)\,d\lambda(x)$.  Substituting
$h=\widetilde h\circ F$ gives
$\int_{F^{-1}(Y)}\widetilde h(F(x))\,d\lambda(x)$.  By the integration
formula for the pushforward measure, this is
$\int_Y\widetilde h(t)\,d(F_*\lambda)(t)$.  Thus
$F_*\mu_K\ll F_*\lambda$ and
$\frac{d(F_*\mu_K)}{d(F_*\lambda)}=\widetilde h$.  Since
$h=\widetilde h\circ F$, the pushforward integration formula gives
$-\int_{[0,1]}\log\widetilde h\,d(F_*\mu_K)
=-\int_{[A,B]}\log(\widetilde h\circ F)\,d\mu_K
=-\int_{[A,B]}\log h\,d\mu_K$.  Therefore,
\begin{equation}\label{entropy-under-F}
    D(F_*\mu_K\mid F_*\lambda)=D(\mu_K\mid\lambda).
\end{equation}
If $\mu_K\not\ll\lambda$, choose a Borel set
$Z\subset[A,B]\bs F^{-1}(\mathcal P)$ such that
$\lambda(Z)=0<\mu_K(Z)$.  For every $x\in Z$,
$F^{-1}(\{F(x)\})=\{x\}$.  Hence
$F(Z)=q^{-1}(Z)$ is Borel and $F^{-1}(F(Z))=Z$.  Therefore
$(F_*\lambda)(F(Z))=0<(F_*\mu_K)(F(Z))$, so
\eqref{entropy-under-F} again holds with both sides equal to $-\infty$.

It remains to approximate this common entropy by the dyadic partitions.
For the pair $(F_*\mu_K,F_*\lambda)$, the variational formula for relative
entropy gives
\begin{equation}\label{partition-entropy-variational}
    D(F_*\mu_K\mid F_*\lambda)
    =
    \inf_{\substack{f\in C([0,1])\\ f>0}}\Psi(f),
\end{equation}
where
\begin{equation*}
    \Psi(f)=\int f\,d(F_*\lambda)
      -\int(1+\log f)\,d(F_*\mu_K).
\end{equation*}
See \cite[Proposition~2.2.2, (2.2.5)--(2.2.6), pp.~49--50]{Sim11}.

We first minimize the functional $\Psi$ over functions that are constant
on each dyadic cell.
For $m\ge1$, let $\mathcal E_m$ be the positive functions on $[0,1]$ that
are constant on each interval $J_{k,2^m}$.
If $f=c>0$ on $J_{k,2^m}$, then
\eqref{partition-pushforward-masses} shows that the contribution of this
interval to $\Psi(f)$ is
\begin{equation*}
    c\frac{\ell_{k,2^m}}L-\frac1{2^m}(1+\log c).
\end{equation*}
Its first and second derivatives are
$\ell_{k,2^m}/L-1/(2^m c)$ and $1/(2^m c^2)$, respectively.  Hence its
minimum is attained at $c=L/(2^m\ell_{k,2^m})$, where its value is
\begin{equation*}
    -\frac1{2^m}
      \log\frac{1/2^m}{\ell_{k,2^m}/L}.
\end{equation*}
Summing over $k$ and using \eqref{partition-entropy-def} gives
\begin{equation}\label{partition-functional-minimum}
    \inf_{f\in\mathcal E_m}\Psi(f)=-H_{2^m}.
\end{equation}
Since $\mathcal E_m\subset\mathcal E_{m+1}$, the sequence $H_{2^m}$ is
increasing.  Suppose that $F_*\mu_K\ll F_*\lambda$, and put
$r=d(F_*\mu_K)/d(F_*\lambda)$.  Fix $1\le k\le2^m$ and write
$J=J_{k,2^m}$.  By \eqref{partition-pushforward-masses},
the measure $L\ell_{k,2^m}^{-1}\chi_J\,d(F_*\lambda)$ has total mass one,
and
\begin{equation*}
    \frac{L}{\ell_{k,2^m}}
      \int_J r\,d(F_*\lambda)
    =\frac{L}{\ell_{k,2^m}}(F_*\mu_K)(J)
    =\frac{L}{2^m\ell_{k,2^m}}.
\end{equation*}
Jensen's inequality for $x\log x$ therefore gives
\begin{equation*}
\begin{aligned}
    \frac{L}{\ell_{k,2^m}}
      \int_J r\log r\,d(F_*\lambda)
    &\ge
      \left(\frac{L}{\ell_{k,2^m}}
      \int_J r\,d(F_*\lambda)\right)
      \log\left(\frac{L}{\ell_{k,2^m}}
      \int_J r\,d(F_*\lambda)\right)\\
    &=\frac{L}{2^m\ell_{k,2^m}}
      \log\frac{L}{2^m\ell_{k,2^m}}.
\end{aligned}
\end{equation*}
Multiplication by $\ell_{k,2^m}/L$ yields
\begin{equation*}
    \int_J r\log r\,d(F_*\lambda)
    \ge\frac1{2^m}
      \log\frac{1/2^m}{\ell_{k,2^m}/L}.
\end{equation*}
After summing over $k$, \eqref{partition-entropy-def} and the definition of
$D$ give
\begin{equation}\label{partition-entropy-upper}
    D(F_*\mu_K\mid F_*\lambda)
    =-\int_{[0,1]}r\log r\,d(F_*\lambda)
    \le-H_{2^m}.
\end{equation}
If $F_*\mu_K\not\ll F_*\lambda$, then
$D(F_*\mu_K\mid F_*\lambda)=-\infty$, so
\eqref{partition-entropy-upper} still holds.

Conversely, let $f$ be a positive continuous function on $[0,1]$ (and hence
bounded away from zero), and choose $f_m\in\mathcal E_m$ such that
$f_m\rightarrow f$ uniformly.  Then $\log f_m\rightarrow\log f$ uniformly, and
\eqref{partition-functional-minimum} gives
$-H_{2^m}\le\Psi(f_m)\rightarrow\Psi(f)$.  Taking the infimum over all positive continuous $f$ and using \eqref{partition-entropy-variational} gives
\begin{equation*}
    \lim_{m\rightarrow\infty}(-H_{2^m})
    \le D(F_*\mu_K\mid F_*\lambda).
\end{equation*}
Together with \eqref{partition-entropy-upper}, this proves
$H_{2^m}\uparrow-D(F_*\mu_K\mid F_*\lambda)$.  Equation
\eqref{entropy-under-F} proves \eqref{partition-entropy-limit}.
\end{proof}

\begin{theorem}\label{unconditional-pw-thm}
Let $2\le p\le\infty$ and assume the hypotheses of
Theorem~\ref{szego-implications-thm}.  If $(S)$ and $(U_p)$ hold, then
$(P)$ and $(B)$ hold.  More precisely,
\begin{equation}\label{unconditional-pw-bound}
    \PW(K)
    \le 2\log\frac{\ol W_p(K,\mu)}{S_p(K,w)}-\log\frac4\pi.
\end{equation}
\end{theorem}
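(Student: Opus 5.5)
The plan is to get $(B)$ for free, reduce to the case $p=2$ on $K_0$ alone, and then bound Cesàro averages of $\log W_{2,n}$ from below by the dyadic partition entropies $H_N$ via a Gram determinant (Heine formula) estimate.

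\textbf{Step 1: $(B)$ and reduction to $p=2$.} Condition $(B)$ follows at once from Theorem~\ref{szego-implications-thm}(a), since $(S)+(U_p)\Rightarrow(B)$. So only \eqref{unconditional-pw-bound} needs proof, and I would reduce it to $p=2$ with an absolutely continuous measure on $K_0$. Put $\rho=w^{2/p}$ for $2\le p<\infty$ and $\rho=w^2$ for $p=\infty$, and set $d\nu=\rho\,d\mu_K$.
\begin{itemize}
\item For finite $p$, Hölder's inequality with respect to the probability measure $\mu_K$ gives
\[
\int|P|^2\rho\,d\mu_K\le\Bigl(\int_{K_0}|P|^pw\,d\mu_K\Bigr)^{2/p}\le\Bigl(\int|P|^p\,d\mu\Bigr)^{2/p}.
\]
\item For $p=\infty$, trivially $\int|P|^2w^2\,d\mu_K\le\|wP\|_K^2$.
\end{itemize}
Hence $W_{2,n}(K_0,\nu)\le W_{p,n}(K,\mu)$ for every $n$; here $\ca(K_0)=\ca(K)$. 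Moreover $S(K,\rho)^{1/2}=S_p(K,w)$. So it suffices to prove
\[
\limsup_{N\to\infty}\frac1N\sum_{n=0}^{N-1}\log W_{2,n}(K_0,\nu)^2\ \ge\ \log S(K,\rho)+\PW(K)+\log\tfrac4\pi .
\]
Because Cesàro means do not exceed the $\limsup$, the left side is at most $2\log\ol W_p(K,\mu)$.

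\textbf{Step 2: Gram determinant lower bound.} Let $N=2^m$. By the Heine formula,
\[
\prod_{n=0}^{N-1}t_{2,n}(K_0,\nu)^2=\frac1{N!}\int\!\cdots\!\int\prod_{i\neq j}|x_i-x_j|\prod_i\rho(x_i)\,d\mu_K(x_i).
\]
\begin{itemize}
\item Restrict the integration to configurations with exactly one $x_i$ in each cell $K_{k,N}$ of the partition preceding Lemma~\ref{partition-entropy-lem}. Summing over the $N!$ assignments cancels the factor $1/N!$.
\item Each cell has $\mu_K$-mass $1/N$. Write $\mu_k=N\mu_K|_{K_{k,N}}$ for the normalized cell measures, and apply Jensen's inequality to the product measure $\prod_k\mu_k$.
\item This bounds the log of the integral below by
\[
-N\log N+\sum_k N\!\int_{K_{k,N}}\log\rho\,d\mu_K+\sum_{i\ne j}\iint\log|x-y|\,d\mu_i(x)\,d\mu_j(y).
\]
The middle term equals $\log S(K,\rho)$.
\item The off-diagonal sum equals $N^2\iint\log|x-y|\,d\mu_K\,d\mu_K-\sum_k I(\mu_k)$, where $I(\mu_k)$ is the logarithmic energy of $\mu_k$, and the first term is $N^2\log\ca(K)$.
\item Each $\mu_k$ is a probability measure on $[q_{k-1,N},q_{k,N}]$, so $I(\mu_k)\le\log\ca([q_{k-1,N},q_{k,N}])=\log(\ell_{k,N}/4)$.
\end{itemize}
Dividing by $\ca(K)^{N(N-1)}$ yields
\[
\frac1N\sum_{n<N}\log W_{2,n}^2\ \ge\ \log S(K,\rho)+\frac1N\sum_k\log\frac{4\ca(K)}{N\ell_{k,N}}=\log S(K,\rho)+H_N+\log\frac{4\ca(K)}{L}.
\]

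\textbf{Step 3: passage to the limit.} Let $m\to\infty$. Lemma~\ref{partition-entropy-lem} gives $H_{2^m}\uparrow-D(\mu_K\mid\lambda)$. Lemma~\ref{equilibrium-entropy-bound} then bounds this below by $\PW(K)+\log\frac{L}{\pi\ca(K)}$. The $\log(L/\ca(K))$ terms cancel, leaving exactly $\log S(K,\rho)+\PW(K)+\log\frac4\pi$. Combining with Step~1 gives \eqref{unconditional-pw-bound}. In particular $\PW(K)<\infty$ under $(S)+(U_p)$, which is $(P)$.

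\textbf{Main obstacle.} I expect the hardest part to be the Jensen step in Step~2. One must check integrability: $\log\rho\in L^1(\mu_K)$ by $(S)$, and $\log|x-y|$ is bounded above and integrable against the product cell measures because $\mu_K$ has finite energy. One must also control possibly infinite cell energies, where the inequality $I(\mu_k)\le\log(\ell_{k,N}/4)$ still goes in the right direction. Finally, the $\log|x_i-x_j|$ terms between adjacent cells, which may touch at an endpoint, are handled by the same finite-energy argument.
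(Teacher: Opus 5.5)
Your proposal is correct and follows essentially the same route as the paper: $(B)$ from Theorem~\ref{szego-implications-thm}(a), reduction to $p=2$ via $\rho=w^{2/p}$ (resp.\ $w^2$) by H\"older, the Heine formula restricted to one point per quantile cell $K_{k,N}$, Jensen's inequality, the interval-capacity bound $\log(\ell_{k,N}/4)$ for the cell energies, and then Lemmas~\ref{partition-entropy-lem} and \ref{equilibrium-entropy-bound} along $N=2^m$. The differences are cosmetic: you work directly with $w\,d\mu_K$ and keep the $n=0$ term, whereas the paper normalizes $\mu(K)=1$ and uses $\det G_N(\mu)\ge\det G_N(w\,d\mu_K)$; also, the weight term after Jensen is $N\log S(K,\rho)$ rather than $\log S(K,\rho)$, which your final display already reflects correctly.
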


\begin{proof}
We first prove \eqref{unconditional-pw-bound} for $p=2$.  Multiplying
$d\mu$ by a positive constant leaves the right-hand side unchanged, so we
may assume that $\mu(K)=1$.  In particular, $t_{2,0}(K,\mu)^2=\mu(K)=1$.

Our goal is to use the partition entropy $H_N$ to obtain a lower bound for
the logarithmic average
\begin{equation*}
    \frac2N\sum_{n=1}^{N-1}\log W_{2,n}(K,\mu).
\end{equation*}
The standard moment-matrix identity below provides the connection.  For a
finite positive measure $\rho$ with infinite compact support, let
$t_{2,n}(\rho)$ denote its monic $L^2(\rho)$ extremal norm and define its
$N\times N$ moment matrix by
\begin{equation*}
    G_N(\rho)=
    \left[\int x^{j+k}\,d\rho(x)\right]_{j,k=0}^{N-1}.
\end{equation*}
Gram--Schmidt and Heine's multiple-integral formula
\cite[(2.2.7), (2.2.11), and (2.2.15), pp.~27--28]{Sze75} give
\begin{equation}\label{Gram-Heine-main}
    \det G_N(\rho)
    =\prod_{n=0}^{N-1}t_{2,n}(\rho)^2
    =\frac1{N!}\int_{\bb R^N}
      \prod_{1\le i<j\le N}(x_i-x_j)^2
      \prod_{k=1}^N d\rho(x_k).
\end{equation}
Here $x_1,\ldots,x_N$ are the coordinates on $\bb R^N$, $\prod_{k=1}^N d\rho(x_k)$ denotes the $N$-fold product measure, and the product $\prod_{1\le i<j\le N}(x_i-x_j)^2$ contains one factor for each pair
$1\le i<j\le N$.  Below, products and sums with the subscript $i<j$
are understood to range over these pairs.

The first identity in \eqref{Gram-Heine-main} expresses the determinant as
a product of the $L^2$ extremal norms of degrees $0,\ldots,N-1$.  It will
therefore convert a lower bound for the determinant into a lower bound for
the logarithmic average above.  The second identity allows us to obtain
such a determinant bound by restricting the integral to configurations
having one point in each of the quantile cells
$K_{1,N},\ldots,K_{N,N}$.

Each cell has $\mu_K$-measure $1/N$, so normalizing the restrictions of
$\mu_K$ to these cells produces a factor $N^{-N}$.  Jensen's inequality
then separates the contribution of the weight from the pairwise terms
$\log|x_i-x_j|$.  The cross-cell terms are recovered from the total
logarithmic energy of $\mu_K$, while the within-cell terms are bounded
using the capacities of the intervals
$[q_{k-1,N},q_{k,N}]$.  Their lengths $\ell_{k,N}$ are precisely the
quantities appearing in the partition entropy $H_N$.

For $1\le k\le N$, put $d\nu_{k,N}=N\chi_{K_{k,N}}\,d\mu_K$.  By
\eqref{partition-pushforward-masses}, these are probability measures.  Apply
\eqref{Gram-Heine-main} to $w\,d\mu_K$ and restrict its integral to the
$N$-tuples with one coordinate in each of $K_{1,N},\ldots,K_{N,N}$.  Since
the coordinates are labeled, this is the disjoint union of
$K_{\pi(1),N}\times\cdots\times K_{\pi(N),N}$ over the $N!$ permutations
$\pi$.  The squared Vandermonde and the product measure are symmetric in the
coordinates, so all these integrals are equal and the factor $1/N!$ cancels.
On $K_{k,N}$, we have $d\mu_K=N^{-1}d\nu_{k,N}$, and we use
$\prod_k w(x_k)\prod_{i<j}|x_i-x_j|^2
=\exp(\sum_k\log w(x_k)+2\sum_{i<j}\log|x_i-x_j|)$.
Since $d\nu_{k,N}$ is supported on $K_{k,N}$, we obtain
\begin{align}\label{retained-Gram-Heine}
    \det G_N(w\,d\mu_K)
    &\ge \frac1{N!}\sum_{\pi}
      \int_{K_{\pi(1),N}\times\cdots\times K_{\pi(N),N}}
      \prod_{i<j}|x_i-x_j|^2
      \prod_{k=1}^N\bigl(w(x_k)\,d\mu_K(x_k)\bigr)\notag\\
    &=\int_{K_{1,N}\times\cdots\times K_{N,N}}
      \prod_{i<j}|x_i-x_j|^2
      \prod_{k=1}^N\bigl(w(x_k)\,d\mu_K(x_k)\bigr)\notag\\
    &=N^{-N}\int_{\bb R^N}
      \exp\left(
      \sum_{k=1}^N\log w(x_k)
      +2\sum_{i<j}\log|x_i-x_j|\right)
      \prod_{k=1}^N d\nu_{k,N}(x_k).
\end{align}
Let
\begin{equation*}
    \Phi_N(x_1,\ldots,x_N)
    =\sum_{k=1}^N\log w(x_k)
      +2\sum_{i<j}\log|x_i-x_j|.
\end{equation*}
Each $\nu_{k,N}$ is a probability measure, so
$\nu_{1,N}\otimes\cdots\otimes\nu_{N,N}$ is also a probability measure.
We apply Jensen's inequality to the last integral in \eqref{retained-Gram-Heine}.
We have $\log w\in L^1(\mu_K)$ because
$\log^+w\le w$ and $(S)$ controls the negative part.  Since
$d\nu_{k,N}\le N\,d\mu_K$, this fact and the finite logarithmic energy of
$\mu_K$ show that $\Phi_N$ is integrable.  Moreover, $e^{\Phi_N}$ is
integrable because the Vandermonde factor is bounded on the compact set
$K^N$ and $w\in L^1(\mu_K)$.  The same integrand is positive almost
everywhere since $w>0$ $\mu_K$-a.e.\ and the sets $K_{1,N},\ldots,K_{N,N}$
are pairwise disjoint.  Thus Jensen's inequality gives
\begin{equation}\label{partition-Jensen}
\begin{aligned}
    \log\det G_N(w\,d\mu_K)
    &\ge -N\log N
      +\log\int_{\bb R^N}e^{\Phi_N}
        \prod_{k=1}^N d\nu_{k,N}(x_k)\\
    &\ge -N\log N
      +\int_{\bb R^N}\Phi_N
        \prod_{k=1}^N d\nu_{k,N}(x_k).
\end{aligned}
\end{equation}
Since each $\nu_{k,N}$ is a probability measure, integration in every
coordinate not occurring in a given summand contributes the factor one.
Thus Fubini's theorem and the definition of $\Phi_N$ give
\begin{equation}\label{partition-Phi-average}
\begin{aligned}
    \int_{\bb R^N}\Phi_N\prod_{k=1}^N d\nu_{k,N}(x_k)
    ={}&\sum_{k=1}^N\int_{\bb R}\log w(x_k)\,d\nu_{k,N}(x_k)\\
    &+2\sum_{i<j}\int_{\bb R}\int_{\bb R}\log|x_i-x_j|\,
      d\nu_{i,N}(x_i)d\nu_{j,N}(x_j).
\end{aligned}
\end{equation}
For $1\le i,j\le N$, put
$I_{ij}=\int_{K_{i,N}}\int_{K_{j,N}}
\log|x-y|\,d\mu_K(x)d\mu_K(y)$.
Using $d\nu_{k,N}=N\chi_{K_{k,N}}\,d\mu_K$ and the fact that the
$K_{k,N}$ partition $K_0$, we have
\begin{equation}\label{partition-weight-term}
\begin{aligned}
    \sum_{k=1}^N\int_{\bb R}\log w(x_k)\,d\nu_{k,N}(x_k)
    &=N\sum_{k=1}^N\int_{K_{k,N}}
      \log w(x_k)\,d\mu_K(x_k)\\
    &=N\int_{K_0}\log w(x)\,d\mu_K(x)=N\log S(K,w).
\end{aligned}
\end{equation}
For each $i<j$, the same definition gives
\begin{equation}\label{partition-interaction-term}
\begin{aligned}
    \int_{\bb R}\int_{\bb R}\log|x_i-x_j|\,
      d\nu_{i,N}(x_i)d\nu_{j,N}(x_j)
    &=N^2\int_{K_{i,N}}\int_{K_{j,N}}
      \log|x_i-x_j|\,d\mu_K(x_i)d\mu_K(x_j)\\
    &=N^2I_{ij}.
\end{aligned}
\end{equation}
Substitution of \eqref{partition-Phi-average},
\eqref{partition-weight-term}, and \eqref{partition-interaction-term} into
\eqref{partition-Jensen} gives
\begin{equation}\label{partition-determinant-intermediate}
    \log\det G_N(w\,d\mu_K)
    \ge -N\log N+N\log S(K,w)+2N^2\sum_{i<j}I_{ij}.
\end{equation}
Integrating \eqref{GrFn} against $d\mu_K$ and using $g_K=0$ on $K_0$ gives
\begin{equation*}
    \int_{K_0}\int_{K_0}\log|x-y|\,d\mu_K(x)d\mu_K(y)
    =\log\ca(K).
\end{equation*}
Since the sets $K_{k,N}$ partition $K_0$, it follows that
\begin{equation}\label{partition-energy-split}
    2\sum_{i<j}I_{ij}=\log\ca(K)-\sum_{k=1}^NI_{kk}.
\end{equation}
Since $K_{k,N}\subset[q_{k-1,N},q_{k,N}]$, the measure $d\nu_{k,N}$ is
supported on this interval, whose length is $\ell_{k,N}$.  As a probability
measure supported there, $\nu_{k,N}$ is a competitor for the maximal
logarithmic energy of the interval.  Since an interval of length
$\ell_{k,N}$ has capacity $\ell_{k,N}/4$
\cite[Corollary~5.2.4, p.~134]{Ran95},
\begin{equation}\label{partition-self-energy}
    N^2I_{kk}
    =\int\int\log|x-y|\,d\nu_{k,N}(x)d\nu_{k,N}(y)
    \le\log\frac{\ell_{k,N}}4.
\end{equation}
Combining \eqref{partition-determinant-intermediate},
\eqref{partition-energy-split}, and \eqref{partition-self-energy} gives
\begin{equation}\label{partition-determinant-lower}
    \log\det G_N(w\,d\mu_K)
    \ge -N\log N+N\log S(K,w)+N^2\log\ca(K)
      -\sum_{k=1}^N\log\frac{\ell_{k,N}}4.
\end{equation}

Since $d\mu\ge w\,d\mu_K$, the nonnegative multiple integral in
\eqref{Gram-Heine-main} gives
\begin{equation}\label{partition-Gram-domination}
    \det G_N(\mu)\ge\det G_N(w\,d\mu_K).
\end{equation}
Since $t_{2,0}(K,\mu)^2=\mu(K)=1$, the first identity in
\eqref{Gram-Heine-main} and the definition of $W_{2,n}(K,\mu)$ give
\begin{equation}\label{partition-Gram-Widom-product}
    \det G_N(\mu)=\ca(K)^{N(N-1)}
      \prod_{n=1}^{N-1}W_{2,n}(K,\mu)^2.
\end{equation}
Equations \eqref{partition-determinant-lower},
\eqref{partition-Gram-domination}, and
\eqref{partition-Gram-Widom-product} give
\begin{align}
    2\sum_{n=1}^{N-1}\log W_{2,n}(K,\mu)
    &\ge -N\log N+N\log S(K,w)+N\log\ca(K)
      -\sum_{k=1}^N\log\frac{\ell_{k,N}}4.\label{partition-Widom-intermediate}
\end{align}
Equation \eqref{partition-entropy-def} can be written as
\begin{equation}\label{partition-entropy-expanded}
    H_N=\log L-\log N-\frac1N\sum_{k=1}^N\log\ell_{k,N}.
\end{equation}
Dividing \eqref{partition-Widom-intermediate} by $N$ and then using
\eqref{partition-entropy-expanded} gives
\begin{align}\label{partition-Widom-bound}
    \frac2N\sum_{n=1}^{N-1}\log W_{2,n}(K,\mu)
    &\ge -\log N+\log S(K,w)+\log\ca(K)
      -\frac1N\sum_{k=1}^N\log\frac{\ell_{k,N}}4\notag\\
    &=\log S(K,w)+\log\frac{4\ca(K)}L+H_N.
\end{align}
Thus the determinant formula has converted the dyadic partition entropy
into a lower bound for a logarithmic average of the $L^2$ Widom factors.

Let $M=\ol W_2(K,\mu)$.  Applying Lemmas~\ref{equilibrium-entropy-bound} and \ref{partition-entropy-lem} to \eqref{partition-Widom-bound} along $N=2^m$ gives
\begin{equation}\label{partition-Widom-average-lower}
    \liminf_{m\rightarrow\infty}
    \frac{2}{2^m}\sum_{n=1}^{2^m-1}\log W_{2,n}(K,\mu)
    \ge \log S(K,w)+\PW(K)+\log\frac4\pi.
\end{equation}
Since $(S)$ holds, \eqref{main-lower-eq} gives $\ul W_2(K,\mu)>0$.  Together with $(U_2)$, this shows that $0<M<\infty$.  For every
$\eps>0$, all sufficiently large $n$ satisfy
$W_{2,n}(K,\mu)\le M+\eps$.  Since finitely many initial terms do not affect
these averages,
\begin{equation}\label{partition-Widom-average-upper}
    \limsup_{m\rightarrow\infty}
    \frac{2}{2^m}\sum_{n=1}^{2^m-1}\log W_{2,n}(K,\mu)
    \le 2\log(M+\eps).
\end{equation}
Combining \eqref{partition-Widom-average-lower} and
\eqref{partition-Widom-average-upper}, and then letting $\eps\downarrow0$,
proves
\eqref{unconditional-pw-bound} for $p=2$.

It remains to reduce the cases $2<p<\infty$ and $p=\infty$ to the
$p=2$ result. Let $2<p<\infty$, and put $d\rho_p(x)=w(x)^{2/p}d\mu_K(x)$.  H\"older's inequality with exponents $p/2$ and $p/(p-2)$ gives
\begin{equation*}
    \int_{K_0}w(x)^{2/p}\,d\mu_K(x)
    \le\left(\int_{K_0}w(x)\,d\mu_K(x)\right)^{2/p}
       \mu_K(K_0)^{1-2/p}<\infty.
\end{equation*}
Thus $\rho_p$ is a finite positive measure.  Since $w>0$ $\mu_K$-a.e.,
its support is $K_0$.
For every polynomial $Q$, Jensen's inequality gives
\begin{equation*}
    \|Q\|_{L^2(\rho_p)}^2
    =\int_{K_0}(|Q|^pw)^{2/p}\,d\mu_K
    \le\left(\int_{K_0}|Q|^pw\,d\mu_K\right)^{2/p}.
\end{equation*}
Thus $\|Q\|_{L^2(\rho_p)}\le\|Q\|_{L^p(\mu)}$ by
\eqref{mu-main-decomp}.
Since $\ca(K_0)=\ca(K)$, it follows that
\begin{equation*}
    \ol W_2(K_0,\rho_p)\le\ol W_p(K,\mu),
    \qquad
    S_2(K_0,w^{2/p})=S_p(K,w).
\end{equation*}
The $p=2$ result applied to $d\rho_p$ gives
\eqref{unconditional-pw-bound}.

For $p=\infty$, put $d\rho_\infty=w^2d\mu_K$.  This is a finite positive
measure because $w$ is bounded.  It has support $K_0$ because
$w>0$ $\mu_K$-a.e., and
$\|Q\|_{L^2(\rho_\infty)}\le\|wQ\|_K$.  Hence
\begin{equation*}
    \ol W_2(K_0,\rho_\infty)\le\ol W_\infty(K,\mu),
    \qquad
    S_2(K_0,w^2)=S_\infty(K,w).
\end{equation*}
The $p=2$ result gives \eqref{unconditional-pw-bound}.  Thus $(P)$ holds in
all cases, and Theorem~\ref{szego-implications-thm}(a) gives $(B)$.
\end{proof}

\subsection{The four-way theorem}

\begin{corollary}[Four-way Szeg\H{o} theorem] \label{four-way-thm}
Let $2\le p\le\infty$, and assume the hypotheses of
Theorem~\ref{szego-implications-thm}.  When $p=\infty$, assume also that $w$
is upper semicontinuous on $K$.  Then any three of the following four
assertions imply the remaining one:
\begin{enumerate}
\item[$(P)$] $K_0$ is a Parreau--Widom set.
\item[$(B)$] $X=\{x_j\}$ satisfies the Blaschke condition, $\BC(K)<\infty$.
\item[$(S)$] The weight $w$ satisfies the Szeg\H{o} condition, $S(K,w)>0$.
\item[$(W_p)$] $0<\limsup_{n\rightarrow\infty}W_{p,n}(K,\mu)<\infty$.
\end{enumerate}
Equivalently, if any two of these four assertions hold, then the remaining two assertions are equivalent.
\end{corollary}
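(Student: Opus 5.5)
The corollary follows by checking the four cases, one for each omitted assertion, using Theorem~\ref{szego-implications-thm}, Corollary~\ref{3szego-thm} and Theorem~\ref{unconditional-pw-thm}. Throughout, $2\le p\le\infty$, and for $p=\infty$ the weight $w$ is upper semicontinuous, so every earlier result is available.

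\emph{Case 1: $(P)+(B)+(S)\Rightarrow(W_p)$.} This is Corollary~\ref{3szego-thm}(a). Both $(U_p)$ and $(L_p)$ hold, and $0<\ul W_p\le\ol W_p<\infty$.

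\emph{Case 2: $(P)+(B)+(W_p)\Rightarrow(S)$.} Argue by contradiction. If $(S)$ failed, Theorem~\ref{szego-implications-thm}(b) would give $W_{p,n}(K,\mu)\rightarrow0$, so $\ol W_p=0$, contradicting $(W_p)$. Note that only $\ol W_p>0$ is needed here, not $(L_p)$.

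\emph{Case 3: $(P)+(S)+(W_p)\Rightarrow(B)$.} Here $(W_p)$ contains $(U_p)$, and Theorem~\ref{szego-implications-thm}(a) gives $(S)+(U_p)\Rightarrow(B)$. This step does not use $(P)$.

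\emph{Case 4: $(B)+(S)+(W_p)\Rightarrow(P)$.} This is the only case that needs $p\ge2$. Again $(W_p)$ gives $(U_p)$, and Theorem~\ref{unconditional-pw-thm} gives $(S)+(U_p)\Rightarrow(P)$. Concretely, \eqref{unconditional-pw-bound} bounds $\PW(K)$ by a finite quantity, because $\ol W_p<\infty$ and $S_p(K,w)>0$. The hypothesis $(B)$ is not even needed.

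The ``equivalently'' reformulation is pure logic. Fix two assertions that hold. If a third holds, then three hold and the fourth follows by the case above, so each of the remaining two implies the other.

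None of these steps is a real obstacle, since all the analytic content sits in the earlier results. The points to check are the following.
\begin{itemize}
\item Case 2 uses only the lower half of $(W_p)$, namely $\ol W_p>0$, while Cases 3 and 4 use only the upper half, $\ol W_p<\infty$.
\item The hypotheses for $p=\infty$ match those of each invoked result: bounded weight with $\supp(w)=K$, plus upper semicontinuity for part (b).
\item Theorem~\ref{unconditional-pw-thm} is stated for $2\le p\le\infty$, and this is the sole reason for the range restriction in the corollary.
\end{itemize}
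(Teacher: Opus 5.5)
Your proposal is correct and matches the paper's proof essentially step for step. It uses the same four cases, handled respectively by Corollary~\ref{3szego-thm}(a), the convergence $W_{p,n}\rightarrow0$ when $(S)$ fails under $(P)+(B)$, Theorem~\ref{szego-implications-thm}(a), and Theorem~\ref{unconditional-pw-thm}. Your side remarks are also accurate: $(P)$ is unused in Case~3, $(B)$ is unused in Case~4, and $p\ge2$ enters only through Theorem~\ref{unconditional-pw-thm}.
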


\begin{proof}
If $(P)$, $(B)$, and $(S)$ hold, Corollary~\ref{3szego-thm}(a) gives
$(U_p)$ and $(L_p)$, and hence $(W_p)$.  If $(P)$, $(B)$, and $(W_p)$ hold
but $(S)$ fails, Corollary~\ref{3szego-thm}(b) gives
$W_{p,n}(K,\mu)\rightarrow0$, contradicting $(W_p)$.  If $(P)$, $(S)$, and
$(W_p)$ hold, then $(W_p)$ gives $(U_p)$ and
Theorem~\ref{szego-implications-thm}(a) gives $(B)$.  Finally, if $(B)$,
$(S)$, and $(W_p)$ hold, then $(W_p)$ gives $(U_p)$, and
Theorem~\ref{unconditional-pw-thm} gives $(P)$.
\end{proof}

We record the following specialization of the four-way Szeg\H{o} theorem. It shows that, on a regular non-Parreau--Widom compact subset of $\bb R$, neither the unweighted Chebyshev Widom factors nor the equilibrium-measure $L^2$ Widom factors can be bounded.

\begin{corollary}[Widom-factor characterization of Parreau--Widom sets] \label{regular-pw-characterization-cor}
Let $K\subset\bb R$ be a regular compact set of positive capacity. Then the following conditions are equivalent:
\begin{enumerate}[\quad$(a)$]
\item $K$ is a Parreau--Widom set.
\item The unweighted Chebyshev Widom factors are bounded:
\begin{equation*}
    \sup_{n\ge1}W_{\infty,n}(K,1)<\infty.
\end{equation*}
\item The equilibrium-measure $L^2$ Widom factors are bounded:
\begin{equation*}
    \sup_{n\ge1}W_{2,n}(K,\mu_K)<\infty.
\end{equation*}
\end{enumerate}
In particular, if $K$ has zero Lebesgue measure, then
\begin{equation*}
    \sup_{n\ge1}W_{\infty,n}(K,1)=\infty,
    \qquad
    \sup_{n\ge1}W_{2,n}(K,\mu_K)=\infty.
\end{equation*}
\end{corollary}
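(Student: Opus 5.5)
We will specialize the standing framework to $K_0=K$ and $X=\varnothing$, so that $\BC(K)=0$ and $(B)$ holds trivially. Since $K$ is regular with $\ca(K)>0$, the assumptions \eqref{support-main-assumptions} are satisfied. For (b) we take $p=\infty$ and the weight $w\equiv1$, which is bounded, upper semicontinuous, and has support $K$; here $S(K,1)=1>0$. For (c) we take $p=2$ and $d\mu=\mu_K$. Then the density is $w\equiv1$ with respect to $\mu_K$, the singular part vanishes, and $\supp(\mu_K)=K$ because $K$ is regular (its support is $K_{\rm reg}=K$); again $S(K,1)=1$. So in both cases $(B)$ and $(S)$ hold.

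For (a)$\Rightarrow$(b) and (a)$\Rightarrow$(c), apply Theorem~\ref{szego-implications-thm}(b), or directly \eqref{main-upper-eq}. This gives $\limsup_n W_{p,n}\le 2e^{\PW(K)}<\infty$ for $p=\infty$ and $p=2$. Each individual $W_{p,n}$ is finite, since $t_n(K,1)\le\|x^n\|_K<\infty$ and $t_{2,n}\le\|x^n\|_{L^2(\mu_K)}<\infty$. A finite limsup together with finitely many finite initial terms yields $\sup_{n\ge1}W_{p,n}<\infty$.

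For the converses, assume (b) or (c). Then $(U_p)$ holds with $p\in\{2,\infty\}$. Together with $(S)$, Theorem~\ref{unconditional-pw-thm} gives $(P)$, with the explicit bound $\PW(K)\le 2\log\ol W_p-\log(4/\pi)<\infty$. That theorem requires $2\le p\le\infty$, which covers both cases. This is the only substantive input. The main point to check is that the hypotheses of Theorem~\ref{szego-implications-thm} are met, namely $\supp(\mu)=K$ for $\mu=\mu_K$ and $\supp(w)=K$. Both follow from regularity of $K$, since every point of a regular set lies in the support of $\mu_K$.

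For the final statement we use a known fact: a regular Parreau--Widom subset of $\bb R$ has positive Lebesgue measure. One route is through Lemma~\ref{equilibrium-entropy-bound}. If $K$ has zero Lebesgue measure, then $\mu_K\not\ll\lambda$, so $D(\mu_K\mid\lambda)=-\infty$ and the lemma gives no information. We therefore cite instead the standard result that Parreau--Widom sets in $\bb R$ have $\mu_K$ mutually absolutely continuous with Lebesgue measure on $K$, and in particular positive Lebesgue measure. Consequently, zero Lebesgue measure forces (a) to fail, hence (b) and (c) fail, which is exactly the displayed unboundedness. This citation step is the only place where an input outside the paper's earlier results is needed.
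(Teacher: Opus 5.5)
Your proof is correct and is essentially the paper's argument. The paper applies the four-way Szeg\H{o} theorem (Corollary~\ref{four-way-thm}) with $K_0=K$, $X=\varnothing$, $w\equiv1$, $d\mu=d\mu_K$; this unpacks into your use of Theorem~\ref{szego-implications-thm}(b) for (a)$\Rightarrow$(b),(c) and Theorem~\ref{unconditional-pw-thm} for the converses, except that the paper also invokes $(L_p)$ to pass between $(W_p)$ and $(U_p)$, which your direct route does not need. It cites the same external fact \cite[Proposition, Appendix~A]{Chr12} for the zero-measure statement.

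As a side remark, that citation can be avoided within the paper. Take $w\equiv1$ in \eqref{partition-Widom-bound} and use Lemma~\ref{partition-entropy-lem}: when $|K|=0$ we have $D(\mu_K\mid\lambda)=-\infty$, so the averages $\frac{2}{N}\sum_{n=1}^{N-1}\log W_{2,n}(K,\mu_K)$ diverge along $N=2^m$. Moreover, $W_{2,n}(K,\mu_K)\le W_{\infty,n}(K,1)$ because $\mu_K$ is a probability measure, so both sequences are unbounded.
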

\begin{proof}
Take $K_0=K$, $X=\varnothing$, $w\equiv1$, and $d\mu=d\mu_K$.  Then
$(B)$ and $(S)$ hold.  The equivalent formulation of
Corollary~\ref{four-way-thm} therefore gives
$(P)\Longleftrightarrow(W_p)$ for $p\in\{2,\infty\}$.  Moreover,
Theorem~\ref{szego-implications-thm}(a) gives $(L_p)$, so $(W_p)$ is
equivalent to $(U_p)$ and
$(U_p)$ is equivalent to boundedness of the corresponding sequence.
The final assertion follows from
\cite[Proposition, Appendix~A]{Chr12}.
\end{proof}

\section{Examples and separation of the conditions}

We use the order $(P,B,S,U_p,L_p)$ throughout this section.  A plus sign
means that the corresponding condition holds, and a minus sign means that it
fails.  The range of $p$ is stated whenever an example is not valid for all
$0<p\le\infty$.  In the elementary examples that include $p=\infty$, if only
a measure is displayed, we choose the independent Chebyshev weight to equal
its density on $K_0$ and its atom masses on $X$.  These weights satisfy the
corresponding hypotheses in Theorem~\ref{szego-implications-thm}.

The interval equilibrium potential in \cite[Example~I.3.5]{ST97}, together
with \eqref{GrFn}, gives, for $a<b$,
\begin{equation}\label{interval-endpoint-green}
    \lim_{t\downarrow0}\frac{g_{[a,b]}(b+t)}{\sqrt t}
    =\lim_{t\downarrow0}\frac{g_{[a,b]}(a-t)}{\sqrt t}
    =\frac{2}{\sqrt{b-a}}.
\end{equation}

\subsection{Elementary examples}

We first record a source of regular sets that are not Parreau--Widom and will be used
several times.

\begin{lemma}
\label{non-pw-lem}
There is a regular compact set $K_0\subset\bb R$ of positive capacity for
which $\PW(K_0)=\infty$.
\end{lemma}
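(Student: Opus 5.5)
The plan is to use the characterization already proved in Corollary~\ref{regular-pw-characterization-cor}, or to construct the set directly. The direct construction is the cleanest. We want a regular compact set with infinitely many gaps whose critical Green values are not summable. The simplest candidate is
\[
K_0=\{0\}\cup\bigcup_{k\ge1}[2^{-2k-1},2^{-2k}],
\]
or a variant with intervals $I_k=[a_k,b_k]$ accumulating at $0$ and gaps $(b_{k+1},a_k)$ comparable in size to the intervals. Positive capacity is immediate, since $K_0$ contains an interval.

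Regularity requires checking only the point $0$, because points of nondegenerate intervals are regular. I would apply Wiener's criterion, or simply note that the relative sizes of the pieces near $0$ are scale invariant. For any geometric sequence the capacities of $K_0\cap[-2^{-n},2^{-n}]$ are comparable to $2^{-n}$, so the Wiener series $\sum n/\log(1/\ca)$ diverges like $\sum 1$, which gives regularity at $0$.

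For $\PW(K_0)=\infty$ I would show that $g_{K_0}(c_k)\ge\delta>0$ for every gap. By monotonicity of Green functions, $g_{K_0}\ge g_{E}$ on $\bb C\bs E$ for any compact $E\supset K_0$. I would take
\[
E=[-1,b_{k+1}]\cup[a_k,1],
\]
a two-interval set whose gap is exactly the $k$th gap $(b_{k+1},a_k)$. This inclusion holds because $K_0\subset[0,1]$. Then $g_{K_0}(c_k)\ge\sup_{(b_{k+1},a_k)}g_E$. After an affine rescaling, this quantity is a continuous positive function of the ratio of the gap length to the total length and to the positions of the endpoints. In the geometric example this ratio is bounded below, but $E$ has the fixed large component $[-1,b_{k+1}]$, so I need care. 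A better comparison set is the scaled set $E_k=[-2^{-2k},b_{k+1}]\cup[a_k,2^{-2k+2}]$ combined with the outer portion of $K_0$. This fails to contain $K_0$, so instead I would bound $g_{K_0}$ from below via a localized estimate. Since $g_{K_0}(z)\ge g_{K_0\cup[1,R]}$ does not help, I would use the explicit formula
\[
g_{K_0}(x)=\int\log|x-t|\,d\mu_{K_0}(t)-\log\ca(K_0)
\]
together with the scaling behavior of $\mu_{K_0}$ near $0$.

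The main obstacle is this step. A uniform lower bound on critical values in gaps near a regular point is subtle, because $g$ tends to $0$ at $0$, so the heights might decay. Wiener-regular geometric sets give heights that typically decay only polynomially in the scale, and those sums converge. Therefore I would instead use a Cantor-type construction with nonsummable gap heights. At stage $n$ I would remove from each of the $2^n$ intervals a middle gap of relative proportion $r_n$, with $r_n$ tending to $1$ slowly, in the style of the middle-thirds Cantor set. For the middle-thirds Cantor set itself the property is known: it is regular (Ransford) and not Parreau--Widom, with heights at stage $n$ comparable to $2^{-n}$ times a constant, and there are $2^n$ gaps at that stage, so the sum diverges. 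I would cite this fact, which is the remark in \cite{CSZ17I} and \cite[Appendix~A]{Chr12}, or prove it with the lower bound $g(c)\gtrsim\mu_{K_0}(\text{adjacent pieces})$. That lower bound follows from the integral formula, using self-similarity: $\mu$ of each stage-$n$ piece equals $2^{-n}$, and the logarithmic gain across a gap of relative size $1/3$ is of order $2^{-n}\log 3$. Summing over the $2^n$ gaps at stage $n$ then gives a divergent series.
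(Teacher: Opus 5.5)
Your final choice, the middle-thirds Cantor set $C$, does prove the lemma, but by a different route from the paper. The paper builds $K_0$ from an Eremenko--Yuditskii comb with slits of heights $h_j=1/j$ at points $\pi\eta_j\uparrow\pi\eta_*$, so the critical values are prescribed exactly, $g_{K_0}(c_j)=1/j$. The resulting set is a union of intervals whose gaps accumulate at a single point. This is the same shape as your first candidate, but the gap geometry is tuned so that the heights are not summable; your geometric scaling makes them decay geometrically, as you suspected.

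Your route instead rests on classical facts:
\begin{itemize}
\item $C$ is regular, e.g.\ by Wiener's criterion, since for $x\in C$ the annulus $3^{-n-1}\le|z-x|\le3^{-n}$ contains a copy of $C$ scaled by $3^{-n-1}$;
\item $\ca(C)>0$, which you should state explicitly since it is part of the statement;
\item $|C|=0$;
\item a regular compact subset of $\bb R$ of zero Lebesgue measure is not Parreau--Widom \cite[Proposition, Appendix~A]{Chr12}, a fact the paper itself uses in Example~\ref{ex-B-L2-only}.
\end{itemize}
This citation path is complete, shorter, and avoids the comb machinery. What it gives up is exact control of the heights. It also loses the fact that the paper's example has positive Lebesgue measure, which shows that failure of $(P)$ need not come from thinness of the set.

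The self-contained justification you sketch, however, contains a false step. The equilibrium measure $\mu_C$ is not the self-similar Cantor measure $\kappa$. Self-similarity and symmetry of $\kappa$ give $\int\log|t|\,d\kappa(t)-\int\log|\tfrac13-t|\,d\kappa(t)=\tfrac12\int\log\frac{2+t}{1+t}\,d\kappa(t)>0$, so the potential of $\kappa$ is not constant on $C$. Hence stage-$n$ pieces need not have $\mu_C$-mass $2^{-n}$, and your claim that the heights are comparable to $2^{-n}$ is unsupported.

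Moreover, the bound $g(c)\gtrsim\mu_C(\text{adjacent pieces})$ does not follow from comparing $c$ with a single endpoint $x$, because $\log|c-t|-\log|x-t|$ changes sign. Average over both endpoints $x<y$ of a gap of length $d=3^{-n}$ instead. These are regular points, so with $c=(x+y)/2$,
\[
2g_C(c)=\int\log\frac{|c-t|^2}{|t-x|\,|t-y|}\,d\mu_C(t)
=\int\log\Bigl(1+\frac{d^2}{4(t-x)(t-y)}\Bigr)\,d\mu_C(t).
\]
The integrand is positive on $C\setminus\{x,y\}$. On the two adjacent pieces $I,I'$ of length $3^{-n}$ one has $(t-x)(t-y)\le2\cdot9^{-n}$, so the integrand is at least $\log\frac98$ there. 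Hence the critical value of this gap is at least $g_C(c)\ge\frac12\log\frac98\,\mu_C(I\cup I')$.

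The sets $I\cup I'$, taken over all gaps of length $3^{-n}$, are disjoint and cover $C$. So each generation contributes at least $\frac12\log\frac98$ to $\PW(C)$, however $\mu_C$ is distributed, and therefore $\PW(C)=\infty$.

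Finally, drop the variant with ratios $r_n\to1$. It only endangers positive capacity and regularity, and your argument actually uses the constant ratio $1/3$.
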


\begin{proof}
Choose $0<\eta_1<\eta_2<\cdots\uparrow\eta_*<1$.  In the comb
representation from \cite[Section~4, pp.~108--112]{EreYud12}, take vertical
slits based at $\pi\eta_j$ with heights $h_j=1/j$, and put the height equal to
zero at all other points of the base.  Under the inverse construction, these
slits correspond to bounded gaps with critical Green values $h_j$.  Since,
for every $\eps>0$, only finitely many slit heights exceed $\eps$, the
regularity criterion in the same section applies.  The resulting compact set
$K_0$ has positive capacity and $\PW(K_0)=\sum_jh_j=\infty$.
\end{proof}

\begin{example}\label{elementary-patterns-ex}
The following choices realize five sign patterns.
\begin{enumerate}[(a)]
\item Let $K=[-1,1]$ and $d\mu=d\mu_K$.  Then
$(P,B,S,U_p,L_p)=(+,+,+,+,+)$ for every $0<p\le\infty$.
Indeed, $(P)$, $(B)$, and $(S)$ are immediate, while both Widom-factor
conditions follow from Corollary~\ref{3szego-thm}.

\item Let $K=[-1,1]$ and
$w(x)=\exp(-(1-x^2)^{-1})$ for $-1<x<1$, with $w(\pm1)=0$.  Since
$\int\log w\,d\mu_K=-\infty$, we have $S(K,w)=0$.  For finite $p$, take
$d\mu=w\,d\mu_K$.  For $p=\infty$, use $w$ as the weight.  Theorem~\ref{szego-implications-thm}(b) shows that the corresponding Widom factors tend to zero.  Thus $(P,B,S,U_p,L_p)=(+,+,-,+,-)$ for every $0<p\le\infty$.

\item Let $K_0=[-1,1]$, $X=\{1+1/j:j\ge1\}$, and
$K=K_0\cup X$.  Set
$d\mu=d\mu_K+\sum_{j\ge1}2^{-j}d\delta_{1+1/j}$.  Here $S(K,w)=1$.
Since the polar set $X$ does not change the Green function, the
endpoint limit \eqref{interval-endpoint-green} gives
$g_K(1+1/j)\ge j^{-1/2}$ for all sufficiently large $j$.  Thus
$\BC(K)=\infty$.  The first part of
Theorem~\ref{szego-implications-thm}
therefore gives $(P,B,S,U_p,L_p)=(+,-,+,-,+)$ for every
$0<p\le\infty$.

\item Let $K=K_0$ be supplied by Lemma~\ref{non-pw-lem} and take
$d\mu=d\mu_K$.  Then $(B)$ and $(S)$ hold.  The lower bound
\eqref{main-lower-eq} in Theorem~\ref{main-estimates-thm} gives $(L_p)$,
whereas $(U_p)$ would contradict Theorem~\ref{unconditional-pw-thm}.  Hence
$(P,B,S,U_p,L_p)=(-,+,+,-,+)$ for every $2\le p\le\infty$.

\item Let $K_0$ be as in part (d), and choose isolated points
$X=\{x_j\}\subset\bb R\bs K_0$ accumulating only on $K_0$ so that
$\sum_jg_{K_0}(x_j)=\infty$.  For instance, if $b=\max K_0$, choose
$x_j\downarrow b$ with $g_{K_0}(x_j)=1/j$.  Put $K=K_0\cup X$.  For
$d\mu=d\mu_{K_0}+\sum_{j\ge1}2^{-j}d\delta_{x_j}$, we have
$S(K,w)=1$ and $\BC(K)=\infty$.  Hence the lower bound
\eqref{main-lower-eq} in Theorem~\ref{main-estimates-thm} gives
$\liminf_nW_{p,n}(K,\mu)=\infty$.  Thus
$(P,B,S,U_p,L_p)=(-,-,+,-,+)$ for every $0<p\le\infty$.
\end{enumerate}
\end{example}

\subsection{Discrete measures}

A finite positive measure $\mu$ is called discrete if
$\mu=\sum_j a_j\delta_{x_j}$, where $a_j>0$ and $\sum_j a_j<\infty$.

We shall use the standard fact that finitely supported probability measures on
a compact set are weak-star dense among its probability measures.  See
\cite[Example~8.1.6(i), pp.~176--177]{Bog07}.

The next proposition supplies two general constructions for $1<p<\infty$.  Part~(a) shows that every support in the class carries a discrete measure for which $(S)$ fails and the Widom factors tend to zero.
Part~(b) starts with a support carrying some measure with unbounded Widom
factors and produces a discrete measure on the same support for which
$(S)$ fails and
\begin{equation*}
    \liminf_{n\rightarrow\infty}W_{p,n}=0,
    \qquad
    \limsup_{n\rightarrow\infty}W_{p,n}=\infty.
\end{equation*}
These constructions provide the zero and oscillatory behaviors used in
the separation examples below.  Part~(a) is the finite-$p$ version of
\cite[Theorem~5.5 and Corollary~5.6]{Sim07}.

\begin{proposition}\label{oscillating-singularization}
Let $1<p<\infty$.  Let $K_0\subset\bb R$ be a regular compact set of positive
capacity, and let $X\subset\bb R\bs K_0$ be finite or countable.  Assume that
$K=K_0\cup X$ is compact and that every point of $X$ is isolated in $K$.
\begin{enumerate}[(a)]
\item There is a finite positive discrete measure $\mu$ such that
$\supp(d\mu)=K$ and $W_{p,n}(K,\mu)\rightarrow0$.

\item If a finite positive measure $\nu$ satisfies $\supp(d\nu)=K$ and
$\sup_nW_{p,n}(K,\nu)=\infty$, then there is a finite positive discrete
measure $\mu$ such that $\supp(d\mu)=K$ and
\begin{equation}\label{discrete-oscillation-eq}
    \liminf_{n\rightarrow\infty}W_{p,n}(K,\mu)=0,
    \qquad
    \limsup_{n\rightarrow\infty}W_{p,n}(K,\mu)=\infty.
\end{equation}
\end{enumerate}
In both parts, $d\mu\perp d\mu_K$, so the density $w$ in
\eqref{mu-main-decomp} vanishes $\mu_K$-almost everywhere and $(S)$ fails.
\end{proposition}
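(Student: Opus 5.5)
The plan is an inductive construction in the spirit of \cite[Theorem~5.5]{Sim07}. Fix a countable set $D=\{d_k\}_{k\ge1}\subset K$ that contains $X$ and is dense in $K$. The measure will have the form $\mu=\sum_k c_k\,\delta_{y_k}$ with atoms $y_k\in K$, built in stages, with $D$ exhausted along the way so that $\supp(d\mu)=K$. Since $\mu_K=\mu_{K_0}$ has no atoms (it has finite energy and does not charge polar sets), every discrete measure satisfies $d\mu\perp d\mu_K$. Hence $w=0$ $\mu_K$-a.e.\ and $(S)$ fails, which settles the final sentence of the proposition. Two elementary facts drive everything.
\begin{enumerate}[(i)]
\item Monotonicity: $\mu\ge\rho$ implies $t_{p,n}(K,\mu)\ge t_{p,n}(K,\rho)$.
\item Smallness from interpolation: if $\mu_m$ has finitely many atoms $y_1,\dots,y_m$, put $Q_m=\prod_{k\le m}(x-y_k)$. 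For $n\ge m$, test with $P=Q_mT_{n-m}$, where $T_j$ is the Chebyshev polynomial of $K$. This gives
\begin{equation*}
    t_{p,n}(K,\mu)^p\le\Bigl(\sum_{k>m}c_k\Bigr)\|Q_m\|_K^p\|T_{n-m}\|_K^p,
\end{equation*}
because the first $m$ atoms contribute nothing.
\end{enumerate}

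For part (a), choose integers $N_1<N_2<\cdots$ and weights inductively. At stage $m$ the atoms $y_1,\dots,y_m$ (the first $m$ points of $D$) and the weights $c_1,\dots,c_m$ are already fixed. Put $N_m=\max(m,N_{m-1}+1)$ and pick any $N_{m+1}>N_m$. By (ii), for the finitely many $n\in[N_m,N_{m+1})$ we get $W_{p,n}(K,\mu)\le1/m$, provided the tail sum $\sum_{k>m}c_k$ is at most an explicit $\delta_m>0$ depending on $\|Q_m\|_K$, $\max_{n<N_{m+1}}\|T_{n-m}\|_K$, and $\ca(K)^{n}$. These are finitely many conditions per stage. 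Choosing $c_{k}\le 2^{-k}\min_{j<k}\delta_j$ satisfies all of them simultaneously, since $\sum_{k>j}c_k\le\delta_j\sum_{k>j}2^{-k}\le\delta_j$. Then $W_{p,n}(K,\mu)\to0$. No growth bound on $\|T_j\|_K$ such as Parreau--Widom is needed, because each stage involves only finitely many degrees.

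For part (b), interleave two kinds of stages. The ``small'' stages are exactly as in (a). At a ``large'' stage $m$, the weights $c_1,\dots,c_{M}$ placed so far are fixed, and so is the smallness budget $\lambda_m:=\min_{j\le M}\delta_j$ for the total mass still to be added. Since $\sup_nW_{p,n}(K,\nu)=\infty$ while each individual $W_{p,n}$ is finite, we also have $\limsup_nW_{p,n}(K,\nu)=\infty$. So we can pick $n_m$, larger than every degree constrained so far, with
\begin{equation*}
    W_{p,n_m}(K,\nu)>2m\,(\lambda_m/\nu(K))^{-1/p}.
\end{equation*}
Next, approximate $\nu/\nu(K)$ in the weak-star sense by a finitely supported probability measure $\rho_m$ on $K$ (by \cite[Example~8.1.6(i)]{Bog07}) so that $t_{p,n_m}(K,\rho_m)\ge\tfrac12 t_{p,n_m}(K,\nu/\nu(K))$. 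Then add $\tfrac12\lambda_m\rho_m$ to $\mu$. By (i), every later addition keeps
\begin{equation*}
    W_{p,n_m}(K,\mu)\ge(\lambda_m/2)^{1/p}W_{p,n_m}(K,\rho_m)\gtrsim m .
\end{equation*}
The atoms of $\rho_m$ then join the list that later small stages annihilate with $Q$. Alternating the two kinds of stages gives \eqref{discrete-oscillation-eq}.

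The main obstacle is the approximation step in (b). Upper semicontinuity of $\rho\mapsto t_{p,n}(K,\rho)$ under weak-star convergence is automatic, since it is an infimum of continuous functionals, but here we need \emph{lower} semicontinuity at $\nu$ for fixed $n$. I would prove it by compactness. Because $\supp(d\nu)=K$ is infinite, near-minimizing monic polynomials of degree $n$ for measures $\rho$ weak-star close to $\nu$ have uniformly bounded coefficients. Indeed, $\nu$ charges $n+1$ disjoint neighborhoods of distinct points, and so do nearby $\rho$, which bounds the values of the polynomial at points near those $n+1$ points and hence its coefficients. Passing to a convergent subsequence of such polynomials, together with uniform convergence of $|P|^p$ on $K$, gives $\liminf t_{p,n}(K,\rho)\ge t_{p,n}(K,\nu)$. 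This is where the finiteness of $p$ is used; the assumption $p>1$ gives uniqueness of the extremal polynomial, which makes the argument cleaner, though it may not be essential.
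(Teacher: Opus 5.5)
Your proposal is correct and takes essentially the same approach as the paper. Part (a) annihilates the first $n$ atoms of a dense sequence and keeps the tail mass tiny; the paper does this with explicit weights $a_m=(1-r)r^{m-1}$ and the bound $\|P\|_K\le D^n$ for polynomials with zeros in $K$. Part (b) interleaves finitely supported weak-star approximants of $\nu$, placed at degrees where $W_{p,n}(K,\nu)$ is huge, with degrees exceeding the current number of atoms; this is exactly the paper's scheme with $\nu_k$, $m_k$, $N_k$. The one real difference is that you prove the needed weak-star lower semicontinuity of $t_{p,n}$ at $\nu$ yourself by the interpolation/compactness argument, which works for every $0<p<\infty$, whereas the paper cites \cite[Theorem~2.1]{AZ20b}. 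One bookkeeping fix: make the mass added at large stages decay geometrically, as in your part (a). Adding a fixed half $\tfrac12\lambda_m$ of a budget $\min_{j\le M}\delta_j$ that is never reduced by mass already added does not by itself keep every earlier tail sum below its $\delta_j$.
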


\begin{proof}
For both parts, fix $D>\operatorname{diam}(K)$.  Then
$|x-y|<D$ for $x,y\in K$, while
$\ca(K)\le\operatorname{diam}(K)/2$ by
\cite[Theorem~5.3.4, p.~140]{Ran95}.  In particular,
$0<\ca(K)/(2D)<1$.
Choose a dense sequence $\{z_m\}$ in $K$ containing $X$.  For part (a), set
$r=(\ca(K)/(2D))^p\in(0,1)$ and $a_m=(1-r)r^{m-1}$.  Then
$\sum_{m>n}a_m=r^n=(2^{-n}\ca(K)^n/D^n)^p$ for every $n\ge1$.  Set
$d\mu=\sum_{m\ge1}a_m\,d\delta_{z_m}$.  Since
$\sum_{m\ge1}a_m=1$, this is a probability measure.  Moreover,
$\supp(d\mu)=K$ because $a_m>0$ for every $m$ and $\{z_m\}$ is dense in
$K$.  The monic polynomial $P_n(x)=\prod_{m=1}^n(x-z_m)$ satisfies
$\|P_n\|_K\le D^n$ and
vanishes at $z_1,\ldots,z_n$.  Therefore
\begin{align*}
    t_{p,n}(K,\mu)^p
    &\le \int_K|P_n|^p\,d\mu
    =\sum_{m>n}a_m|P_n(z_m)|^p\\
    &\le D^{np}\sum_{m>n}a_m
    =D^{np}r^n
    =2^{-np}\ca(K)^{np}.
\end{align*}
Taking $p$th roots and dividing by $\ca(K)^n$ gives
$W_{p,n}(K,\mu)\le2^{-n}$.  This proves part~(a).

For part (b), normalize $\nu$ to be a probability measure.  This only
multiplies all its Widom factors by the same positive constant, so they remain
unbounded.  We inductively choose positive numbers $\tau_k$, finitely supported
probability measures $\nu_k$, and integers $m_k,N_k$.  Set $\tau_1=1$.
Once $\tau_k$ has been defined, the unboundedness of
$\{W_{p,n}(K,\nu)\}$ allows us to choose $m_k>k$ such that
$W_{p,m_k}(K,\nu)>4k\tau_k^{-1/p}$.  Since
$W_{p,m_k}(K,\nu)=t_{p,m_k}(K,\nu)/\ca(K)^{m_k}$, this is equivalent to
$t_{p,m_k}(K,\nu)>4k\tau_k^{-1/p}\ca(K)^{m_k}$.

Since $m_k$ is fixed, \cite[Theorem~2.1]{AZ20b} allows us to choose a finitely
supported probability measure $d\nu_k$ on $K$ such that
\begin{equation}\label{osc-large-block}
    t_{p,m_k}(K,\nu_k)>2k\tau_k^{-1/p}\ca(K)^{m_k}.
\end{equation}
In particular, $t_{p,m_k}(K,\nu_k)>0$, and hence
$|\supp(d\nu_k)|\ge m_k+1$.  Indeed, otherwise a monic polynomial of degree
$m_k$ could vanish on $\supp(d\nu_k)$ and would have zero $L^p(d\nu_k)$ norm.
Let
$A_k=\supp(d\nu_1)\cup\cdots\cup\supp(d\nu_k)
\cup\{z_1,\ldots,z_k\}$,
set $N_k=|A_k|+1$.  Since $\supp(d\nu_k)\subset A_k$, we have
$N_k\ge m_k+2>k$.  Finally, define
\begin{equation}\label{osc-parameter-def}
    \varepsilon_k=
        \left(\frac{2^{-k}\ca(K)^{N_k}}{D^{N_k}}\right)^p,
    \qquad
    \tau_{k+1}=2^{-k-2}\varepsilon_k.
\end{equation}
This completes the $k$th step.  Moreover,
$A_{k-1}\subset A_k$, so $N_{k-1}\le N_k$.  Consequently, for $k>1$,
\begin{equation}\label{osc-epsilon-decrease}
    \frac{\varepsilon_k}{\varepsilon_{k-1}}
    =\left(\frac12
      \left(\frac{\ca(K)}{D}\right)^{N_k-N_{k-1}}\right)^p
    \le 2^{-p}<1.
\end{equation}

Now set
$d\mu=\sum_{k\ge1}\tau_k(d\nu_k+d\delta_{z_k})$.
By \eqref{osc-epsilon-decrease}, $\varepsilon_k\le\varepsilon_1$, and hence
\eqref{osc-parameter-def} gives
$\sum_k\tau_k\le1+\varepsilon_1\sum_{k\ge2}2^{-k-1}
=1+\varepsilon_1/4<\infty$.
Thus $\mu$ is a finite discrete measure with $\supp(d\mu)=K$.  The
degrees $m_k$ give the large Widom factors.  Indeed,
$d\mu\ge\tau_kd\nu_k$, so
\eqref{osc-large-block} gives
\begin{equation}\label{osc-unbounded-W-subsequence}
    W_{p,m_k}(K,\mu)
    \ge \frac{\tau_k^{1/p}t_{p,m_k}(K,\nu_k)}{\ca(K)^{m_k}}>2k.
\end{equation}

The degrees $N_k$ give the small Widom factors.  Choose $y_k\in K$ and set
$Q_k(x)=(x-y_k)\prod_{a\in A_k}(x-a)$.  This polynomial is monic of degree
$N_k$, vanishes on $A_k$, and has all its zeros in $K$.  Hence
\begin{equation}\label{osc-Qk-norm-eq}
    \|Q_k\|_K\le D^{N_k}.
\end{equation}
If $\ell\le k$, then
$\supp(d\nu_\ell)\cup\{z_\ell\}\subset A_k$, and therefore
$\int_K|Q_k|^p\,d\nu_\ell+|Q_k(z_\ell)|^p=0$.  If $\ell>k$, then
$\varepsilon_{\ell-1}\le\varepsilon_k$ by
\eqref{osc-epsilon-decrease}, while \eqref{osc-parameter-def} gives
$\tau_\ell=2^{-\ell-1}\varepsilon_{\ell-1}
\le2^{-\ell-1}\varepsilon_k$.  Therefore
\begin{equation}\label{osc-tail-at-Nk}
    2\sum_{\ell>k}\tau_\ell
    \le 2\varepsilon_k\sum_{\ell>k}2^{-\ell-1}
    =2^{-k}\varepsilon_k<\varepsilon_k.
\end{equation}
Since every $d\nu_\ell$ is a probability measure,
\eqref{osc-Qk-norm-eq} and \eqref{osc-tail-at-Nk} give
\begin{align*}
    t_{p,N_k}(K,\mu)^p
    &\le \int_K|Q_k|^p\,d\mu\\
    &=\sum_{\ell>k}\tau_\ell
      \left(\int_K|Q_k|^p\,d\nu_\ell+|Q_k(z_\ell)|^p\right)\\
    &\le 2D^{pN_k}\sum_{\ell>k}\tau_\ell
    <D^{pN_k}\varepsilon_k
    =2^{-kp}\ca(K)^{pN_k}.
\end{align*}
Taking $p$th roots and dividing by $\ca(K)^{N_k}$ gives
\begin{equation}\label{osc-zero-W-subsequence}
    0\le W_{p,N_k}(K,\mu)\le2^{-k}.
\end{equation}
Since $m_k>k$ and $N_k\ge m_k+2$, both sequences of degrees tend to infinity.
Hence \eqref{osc-unbounded-W-subsequence} and
\eqref{osc-zero-W-subsequence} give
$\limsup_{n\rightarrow\infty}W_{p,n}(K,\mu)=\infty$ and
$\liminf_{n\rightarrow\infty}W_{p,n}(K,\mu)=0$, respectively.  This proves
\eqref{discrete-oscillation-eq}.  Finally, both measures constructed above are
concentrated on countable sets, whereas $\mu_K$ has no point masses.  Thus they
are singular with respect to $\mu_K$, and their density $w$ in
\eqref{mu-main-decomp} vanishes
$\mu_K$-almost everywhere.  Hence $S(K,w)=0$.
\end{proof}

\begin{corollary}\label{discrete-patterns-cor}
For every $1<p<\infty$, the patterns in rows
$\mathrm{(a)}$--$\mathrm{(d)}$ and $\mathrm{(f)}$ below are realized by
discrete measures $d\mu$, where $K=\supp(d\mu)$.  Row $\mathrm{(e)}$ is
realized for every $2\le p<\infty$.
\begin{equation*}
\begin{array}{c|ccc}
 & (P,B,S,U_p,L_p) & & (P,B,S,U_p,L_p)\\
\hline
(a)&(+,-,-,+,-)&(d)&(+,-,-,-,-)\\
(b)&(-,+,-,+,-)&(e)&(-,+,-,-,-)\\
(c)&(-,-,-,+,-)&(f)&(-,-,-,-,-).
\end{array}
\end{equation*}
\end{corollary}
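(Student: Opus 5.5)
The plan is to realize each row by fixing a support $K=K_0\cup X$ whose $(P)$ and $(B)$ signs are the required ones, and then placing on $K$ a discrete measure produced by Proposition~\ref{oscillating-singularization}. Conditions $(P)$ and $(B)$ depend only on $K$. By the final assertion of Proposition~\ref{oscillating-singularization}, every measure it produces satisfies $d\mu\perp d\mu_K$, so $(S)$ fails in every row. It therefore remains only to control $(U_p)$ and $(L_p)$. We use three supports:
\begin{enumerate}[(i)]
\item $K=[-1,1]\cup\{1+1/j:j\ge1\}$, for which $(P)$ holds and $(B)$ fails, as shown in Example~\ref{elementary-patterns-ex}(c);
\item $K=K_0$ with $K_0$ from Lemma~\ref{non-pw-lem} and $X=\varnothing$, for which $(P)$ fails and $(B)$ holds;
\item $K=K_0\cup X$ with $K_0$ from Lemma~\ref{non-pw-lem} and $x_j\downarrow\max K_0$, $g_{K_0}(x_j)=1/j$, as in Example~\ref{elementary-patterns-ex}(e), for which both $(P)$ and $(B)$ fail.
\end{enumerate}
Each of these supports satisfies the hypotheses of Proposition~\ref{oscillating-singularization}.

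For rows (a), (b), (c), apply Proposition~\ref{oscillating-singularization}(a) on supports (i), (ii), (iii), respectively. This gives a discrete $\mu$ with $\supp(d\mu)=K$ and $W_{p,n}(K,\mu)\to0$. Hence $\ol W_p=0<\infty$, so $(U_p)$ holds, and $\ul W_p=0$, so $(L_p)$ fails. This yields the patterns $(+,-,-,+,-)$, $(-,+,-,+,-)$, and $(-,-,-,+,-)$ for every $1<p<\infty$.

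For rows (d), (e), (f), apply Proposition~\ref{oscillating-singularization}(b). This requires an auxiliary measure $\nu$ with $\supp(d\nu)=K$ and $\sup_nW_{p,n}(K,\nu)=\infty$.
\begin{enumerate}[(i)]
\item On support (i), take $\nu$ to be the measure of Example~\ref{elementary-patterns-ex}(c). By Theorem~\ref{szego-implications-thm}(a), its Widom factors tend to infinity for all $0<p\le\infty$. This gives row (d).
\item On support (iii), take $\nu$ to be the measure of Example~\ref{elementary-patterns-ex}(e), whose Widom factors likewise tend to infinity for every $p$. This gives row (f).
\item On support (ii), take $\nu=\mu_{K_0}$. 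Example~\ref{elementary-patterns-ex}(d), which relies on Theorem~\ref{unconditional-pw-thm}, shows that $(U_p)$ fails for this $\nu$, so $\sup_nW_{p,n}(K,\nu)=\infty$. This gives row (e).
\end{enumerate}
In each case, Proposition~\ref{oscillating-singularization}(b) then supplies a discrete $\mu$ with $\liminf_nW_{p,n}(K,\mu)=0$ and $\limsup_nW_{p,n}(K,\mu)=\infty$. Thus both $(U_p)$ and $(L_p)$ fail.

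The only delicate point is row (e). There the unboundedness of the auxiliary Widom factors comes from the converse estimate \eqref{unconditional-pw-bound}, which is available only for $2\le p$. Combined with the restriction $p<\infty$ in Proposition~\ref{oscillating-singularization}, this gives exactly the stated range $2\le p<\infty$. In the other rows the unboundedness comes from the lower bound \eqref{main-lower-eq}, which holds for all $p$, so those rows are valid for every $1<p<\infty$.
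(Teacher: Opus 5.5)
Your proposal is correct and follows the paper's proof essentially step for step. You use the same three supports, Proposition~\ref{oscillating-singularization}(a) for rows (a)--(c), and part (b) with the same auxiliary measures for rows (d)--(f). As in the paper, unboundedness comes from the lower estimate for rows (d) and (f) and from Theorem~\ref{unconditional-pw-thm} for row (e), which accounts for the restriction $2\le p<\infty$.
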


\begin{proof}
Let $K_0$ be supplied by Lemma~\ref{non-pw-lem}.  Consider the three compact
sets
$K^{(1)}=[-1,1]\cup\{1+1/j:j\ge1\}$, $K^{(2)}=K_0$, and
$K^{(3)}=K_0\cup X$, where $X$ is chosen so that $\BC(K^{(3)})=\infty$.
Part (a) of Proposition~\ref{oscillating-singularization}, applied to these
three sets, gives rows (a)--(c).

For rows (d)--(f), use respectively the model measures
$d\mu_{[-1,1]}+\sum_{j\ge1}2^{-j}d\delta_{1+1/j}$, $d\mu_{K_0}$, and
$d\mu_{K_0}+\sum_{x_j\in X}2^{-j}d\delta_{x_j}$.  The first and third have
unbounded Widom factors by the lower estimate in
Theorem~\ref{main-estimates-thm}.  The second does as well, since otherwise
Theorem~\ref{unconditional-pw-thm} would make $K_0$ Parreau--Widom.  Part (b) of
Proposition~\ref{oscillating-singularization} produces discrete measures
whose supports are still $K^{(1)}$, $K^{(2)}$, and $K^{(3)}$, respectively.
Since $(B)$ depends only on the support through \eqref{PW-BC-def}, the
constructed measures have the same Blaschke signs $(-,+,-)$.  This gives
rows (d)--(f).
\end{proof}

\begin{example}\label{ex-B-L2-only}
There is a regular compact set $K\subset\bb R$ of zero Lebesgue measure.  It
supports a discrete probability measure $\mu$ such that
$(P,B,S,U_2,L_2)=(-,+,-,-,+)$.

Let $K=K(\gamma)$ be the weakly equilibrium Cantor set of
\cite[Section~2]{AlpGon16} corresponding to $\gamma_s=2^{-3s-2}$, $s\ge1$.
Then $\gamma_s\le\gamma_1=1/32$.  The capacity formula and the zero-measure
estimate in \cite[p.~3783]{AlpGon16} give
$\ca(K)=\exp(\sum_{s\ge1}2^{-s}\log\gamma_s)=2^{-8}>0$ and $|K|=0$.
Since $\ca(K)>0$, the capacity formula gives
$\sum_{s\ge1}2^{-s}\log(1/\gamma_s)<\infty$.  Put
$\delta_s=\gamma_1\cdots\gamma_s$ and
$\rho_s=\sum_{k=s+1}^\infty2^{-k}\log(1/(2\gamma_k))$.  If
$\delta_s\le\delta<\delta_{s-1}$, \cite[Theorem~5]{Gon14} gives
\begin{equation*}
    \sup_{\dist(z,K)\le\delta}g_K(z)
    <\rho_s+2^{-s}\log\frac{16\delta}{\delta_s}
    <\rho_s+2^{-s}\log\frac{16}{\gamma_s}.
\end{equation*}
The convergence above implies
$\rho_s\rightarrow0$ and $2^{-s}\log(16/\gamma_s)\rightarrow0$.  Therefore
$g_K(z)\rightarrow0$ as $z$ approaches $K$, and $K$ is regular by
\cite[Theorem~4.4.9]{Ran95}.  If
$2^s\le n<2^{s+1}$, where $s\ge0$, the proof of
\cite[Theorem~5.1(a)]{AlpGon16} and \cite[Example~5.2]{AlpGon16} give
$W_{2,n}(K,\mu_K)\ge1/\sqrt{6\gamma_{s+1}}$.  Since
$\gamma_{s+1}^{-1}=2^{3s+5}\ge4(n+1)^3$, it follows that
\begin{equation}\label{discrete-example-equilibrium-bound}
    W_{2,n}(K,\mu_K)\ge\frac12(n+1)^{3/2},
    \quad n\ge1.
\end{equation}

Choose a sequence $\{y_n\}$ dense in $K$, and let
$\{\nu_j\}_{j\ge2}$ be finitely supported probability measures on $K$ such
that $\nu_j\rightarrow\mu_K$ in the weak-star topology.  For every fixed
$n$, $(1-j^{-1})\nu_j+j^{-1}\delta_{y_n}\rightarrow\mu_K$ in the weak-star
topology as $j\rightarrow\infty$.  Hence, by the first assertion of
\cite[Theorem~2.1]{AZ20b}, we may choose $j_n\ge2$ so that the finitely
supported probability measure
$d\sigma_n=(1-j_n^{-1})d\nu_{j_n}+j_n^{-1}d\delta_{y_n}$ satisfies
\begin{equation}\label{discrete-example-approximation}
    t_{2,n}(K,\sigma_n)^2
    \ge\frac12t_{2,n}(K,\mu_K)^2.
\end{equation}
Moreover, $\sigma_n(\{y_n\})\ge j_n^{-1}>0$.

Let $A=\sum_{n\ge1}(n+1)^{-2}$ and define
\begin{equation}\label{discrete-example-measure}
    d\mu=A^{-1}\sum_{n\ge1}(n+1)^{-2}d\sigma_n.
\end{equation}
Since every $\sigma_n$ is a probability measure, so is $\mu$.  It is
discrete, and $\mu(\{y_n\})>0$ for every $n$.  Since $\{y_n\}$ is dense in $K$
and every $\sigma_n$ is supported on $K$, we have $\supp(d\mu)=K$.  From
\eqref{discrete-example-measure},
$d\mu\ge A^{-1}(n+1)^{-2}d\sigma_n$.  Hence
\eqref{discrete-example-approximation} and
\eqref{discrete-example-equilibrium-bound} give
\begin{align*}
    W_{2,n}(K,\mu)^2
    &\ge \frac{(n+1)^{-2}}{A}
    \frac{t_{2,n}(K,\sigma_n)^2}{\ca(K)^{2n}}\\
    &\ge \frac{(n+1)^{-2}}{2A}W_{2,n}(K,\mu_K)^2\\
    &\ge \frac{n+1}{8A}.
\end{align*}
Thus $W_{2,n}(K,\mu)\rightarrow\infty$, so $(U_2)$ fails and $(L_2)$ holds.
Since $X=\varnothing$, $(B)$ holds.  The measure $\mu$ is concentrated on a
countable set, whereas $\mu_K$ has no point masses.  Thus
$d\mu\perp d\mu_K$ and $(S)$ fails.
Finally, a regular zero-measure real compact set is not Parreau--Widom
\cite[Proposition, Appendix~A]{Chr12}.  Hence
$(P,B,S,U_2,L_2)=(-,+,-,-,+)$.
\end{example}

\begin{example}\label{ex-only-L2}
Let $K_0$ denote the set $K$ from Example~\ref{ex-B-L2-only}, and let $d\mu$
be the measure constructed there.  Enumerate the
bounded gaps of $K_0$ as $(\alpha_j,\beta_j)$, let $c_j$ be the critical point
of $g_{K_0}$ in the $j$th gap, and set $X=\{c_j:j\ge1\}$ and $K=K_0\cup X$.
Every point of $X$ is isolated in $K$, and every accumulation point of $X$
belongs to $K_0$, so $K$ is compact and $K_{\rm reg}=K_0$.  Since $X$ is
polar, $\ca(K)=\ca(K_0)$ and $\mu_K=\mu_{K_0}$.  Moreover,
\begin{equation*}
    \BC(K)=\sum_{j\ge1}g_{K_0}(c_j)=\PW(K_0)=\infty.
\end{equation*}

Define
\begin{equation*}
    d\widehat\mu=\frac12d\mu+
    \frac12\sum_{j\ge1}2^{-j}d\delta_{c_j}.
\end{equation*}
Then $\widehat\mu$ is a discrete probability measure with
$\supp(d\widehat\mu)=K$.  Since $\mu_K=\mu_{K_0}$ has no point masses, $(S)$
fails.  Since
$d\widehat\mu\ge\frac12d\mu$ and $\ca(K)=\ca(K_0)$,
\begin{equation*}
    W_{2,n}(K,\widehat\mu)^2
    \ge\frac12W_{2,n}(K_0,\mu)^2\rightarrow\infty.
\end{equation*}
Thus $(P,B,S,U_2,L_2)=(-,-,-,-,+)$.
\end{example}

\subsection{Jacobi and rank-one constructions}

For a self-adjoint operator $A$, write $\sigma(A)$ and
$\sigma_{\rm ess}(A)$ for its spectrum and essential spectrum.  The inner
product $\langle u,v\rangle$ is linear in $u$.

If $d\nu$ is the probability spectral measure of a half-line Jacobi matrix
with off-diagonal coefficients $\{a_n\}_{n\ge1}$, write
$K=\supp(d\nu)$.  Then $t_{2,n}(K,\nu)=a_1\cdots a_n$.  See
\cite[Theorems~1.2.4--1.2.5]{Sim11}.

\begin{example}\label{jacobi-patterns-ex}
Let $d\mu$ be the spectral measure of a half-line Jacobi matrix $J$ for
$\delta_1$.  In both examples below, $a_n\rightarrow1$ and $b_n\rightarrow0$.
Thus the essential spectrum of $J$ is $[-2,2]$, and the eigenvalues outside
this interval are isolated and can accumulate only at $-2$ and $2$.  See
\cite[Theorem~1.4.1 and Remark~1, p.~18]{Sim11}.  Consequently,
$K=\supp(d\mu)=[-2,2]\cup X$, where $X$ is the exterior point spectrum,
$K_0=[-2,2]$ is Parreau--Widom, and $\ca(K)=1$.
\begin{enumerate}[(a)]
\item Take $a_n=1$ and $b_n=n^{-1/4}$.  Since $b\notin\ell^4$ and
$\{b_{n+1}-b_n\}\in\ell^2$, \cite[Corollary~2]{Zla05} gives
$\sum_{x\in X}(|x|-2)^{5/2}=\infty$.  By
\eqref{interval-endpoint-green},
$g_{[-2,2]}(x)\ge(|x|-2)^{5/2}$ for every $x\in X$ sufficiently close to
$-2$ or $2$.  All but finitely many points of $X$ have this property, so the
Blaschke sum diverges.
Moreover, $t_{2,n}=a_1\cdots a_n=1$, so $W_{2,n}=1$.  If $(S)$ held,
the implication $(S)+(U_2)\Rightarrow(B)$ in
Theorem~\ref{szego-implications-thm} would give $(B)$.  Therefore
$(P,B,S,U_2,L_2)=(+,-,-,+,+)$.

\item Take $a_n=1+1/n$ and $b_n=3/n$.  Then
$W_{2,n}=a_1\cdots a_n=n+1$, so $(U_2)$ fails and $(L_2)$ holds.  Here
$n(a_n-1)\rightarrow1$ and $nb_n\rightarrow3$, and
\cite[Theorem~3]{SZ03} shows that $d\mu$ does not satisfy the Szeg\H{o}
condition on $[-2,2]$, so $(S)$ fails.  Finally, $(B)$ would imply
$(U_2)$ by the implication $(P)+(B)\Rightarrow(U_2)$ in
Theorem~\ref{szego-implications-thm}.  Hence
$(P,B,S,U_2,L_2)=(+,-,-,-,+)$.
\end{enumerate}
\end{example}

\subsection{Zero-measure Schr\"odinger constructions}

\begin{proposition}
\label{zero-measure-schrodinger-prop}
Let $J_+$ be a bounded half-line discrete Schr\"odinger operator whose essential
spectrum $K_0$ is regular, has capacity one, and has Lebesgue measure zero.  For
$t\in\bb R$, let $d\mu_t$ be the spectral measure of
$J_+(t)=J_++t\langle\,\cdot\,,\delta_1\rangle\delta_1$, put
$K_t=\supp(d\mu_t)$, and set $X_t=K_t\bs K_0$.  Then $K_t=K_0\cup X_t$ and
$W_{2,n}(K_t,\mu_t)=1$ for every $n\ge1$.  For Lebesgue-a.e.
$t$, the Szeg\H{o} condition fails, and the pattern is
$(-,+,-,+,+)$ if $X_t$ is Blaschke and $(-,-,-,+,+)$ otherwise.
\end{proposition}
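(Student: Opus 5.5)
The plan has three parts: identify the support, compute the Widom factors directly from the Jacobi coefficients, and then read off the Szeg\H{o} and pattern conclusions from the spectral theory of rank-one perturbations combined with earlier results.

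\emph{Support.} $J_+(t)$ is a rank-one perturbation of $J_+$, so Weyl's theorem gives $\sigma_{\rm ess}(J_+(t))=K_0$. The vector $\delta_1$ is cyclic for every Jacobi matrix, so the spectral measure $\mu_t$ has support $\sigma(J_+(t))$. Hence $K_t=K_0\cup X_t$, where $X_t$ consists of discrete eigenvalues. These are isolated in $K_t$ and can accumulate only on $K_0$. Thus \eqref{support-main-assumptions} holds, because $K_0$ is regular with $\ca(K_0)=1>0$, and in particular $\ca(K_t)=1$.

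\emph{Widom factors.} The perturbation changes only $b_1$ to $b_1+t$ and leaves all off-diagonal coefficients unchanged. For a discrete Schr\"odinger operator these coefficients are $a_n=1$. The formula $t_{2,n}(K,\nu)=a_1\cdots a_n$ quoted before Example~\ref{jacobi-patterns-ex} therefore gives $t_{2,n}(K_t,\mu_t)=1$. Dividing by $\ca(K_t)^n=1$ yields $W_{2,n}(K_t,\mu_t)=1$ for all $n\ge1$. It follows immediately that $(U_2)$ and $(L_2)$ hold.

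\emph{Szeg\H{o} condition and $(P)$.} Since $K_0$ is a regular compact set of Lebesgue measure zero, \cite[Proposition, Appendix~A]{Chr12} shows that $K_0$ is not Parreau--Widom, so $(P)$ fails. For $(S)$, I would argue directly rather than go through Simon--Wolff theory. If $(S)$ held, Theorem~\ref{szego-implications-thm}(a) together with $(U_2)$ would give $(B)$, and then Theorem~\ref{unconditional-pw-thm} would give $(P)$, a contradiction. So $(S)$ in fact fails for every $t$, which is stronger than the almost-every-$t$ claim.

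I would also note an alternative, more conceptual route, though it is not needed. The Aronszajn--Donoghue/Simon--Wolff theory says that for a.e.\ $t$ the restriction of $\mu_t$ to $K_0$ is purely singular with respect to Lebesgue measure. But $K_0$ has Lebesgue measure zero, so this gives nothing beyond what is already known, and singularity relative to $\mu_K$ is not automatic from it. The main subtlety is therefore avoided entirely by the argument through Theorem~\ref{unconditional-pw-thm}.

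\emph{Pattern.} With $(P)$ and $(S)$ failing and $(U_2)$, $(L_2)$ holding, only $(B)$ remains, and it depends on whether $X_t$ is Blaschke. The pattern is $(-,+,-,+,+)$ if $X_t$ is Blaschke and $(-,-,-,+,+)$ otherwise.
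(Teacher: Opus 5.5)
Your proof is correct. The support and Widom-factor steps are the same as the paper's; the difference is in how you show that $(S)$ fails.

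The paper uses spectral averaging for rank-one perturbations, $\int_{\bb R}\mu_t(A)\,dt=|A|$. Taking $A=K_0$, which has Lebesgue measure zero, gives $\mu_t(K_0)=0$ for a.e.\ $t$. So $\mu_t$ is carried by the countable set $X_t$ and $d\mu_t\perp d\mu_{K_t}$, which is why the statement says ``a.e.\ $t$''.

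You instead feed $W_{2,n}\equiv1$ back into Theorem~\ref{unconditional-pw-thm}: $(S)+(U_2)\Rightarrow(P)$, while $(P)$ fails because $|K_0|=0$. This is legitimate, since that theorem is proved in Section~3 without reference to this proposition. Your intermediate step through $(B)$ is unnecessary but harmless. The argument is exactly the exclusion of $(-,\ast,+,+,+)$ noted after the separation table, and it gives the stronger conclusion that $(S)$ fails for \emph{every} $t$. The same reasoning would also make the mutual-singularity argument in Example~\ref{zero-measure-fixed-ex}(a) superfluous, since both $\mu_+$ and $\mu_-$ must then fail $(S)$.

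What the paper's route buys is an operator-theoretic reason independent of the main theorems. It also gives a stronger structural conclusion for a.e.\ $t$: $\mu_t$ is purely atomic and singular to $\mu_{K_t}$, not merely failing the Szeg\H{o} integral condition.

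Your side remark on Simon--Wolff undersells this tool. Because $|K_0|=0$, spectral averaging gives $\mu_t(K_0)=0$ outright, and hence singularity with respect to $\mu_K$ too. So there was no real subtlety there to avoid.
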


\begin{proof}
Every off-diagonal Jacobi coefficient is one, so $t_{2,n}(K_t,\mu_t)=1$.
The vector $\delta_1$ is cyclic, and hence $K_t=\sigma(J_+(t))$.  See
\cite[Section~2.2.3 and Theorem~1.6.6]{DamFilI}.  Since a rank-one
perturbation does not change the essential spectrum
\cite[Corollary~1.4.25]{DamFilI}, the points of $X_t$ are isolated
eigenvalues.  Thus $X_t$ is countable and polar, and
$\ca(K_t)=\ca(K_0)=1$ and $\mu_{K_t}=\mu_{K_0}$.
Moreover, $K_0$ is not Parreau--Widom because it has zero Lebesgue measure
\cite[Proposition, Appendix~A]{Chr12}.  By spectral averaging
\cite[Theorem~1.1 and its proof on p.~287]{Mar11}, $\mu_t(K_0)=0$ for
Lebesgue-a.e. $t$.  For these values of $t$,
$d\mu_t\perp d\mu_{K_0}=d\mu_{K_t}$, so $(S)$ fails.
The two alternatives are then determined by
$\sum_{x\in X_t}g_{K_0}(x)$.
\end{proof}

For $\lambda>0$, $\alpha\in\bb R$ with $\alpha/\pi\notin\bb Q$, and
$\theta\in\bb R/(2\pi\bb Z)$, let $H_{\lambda,\alpha,\theta}$ be the
whole-line almost Mathieu operator
\begin{equation*}
    (H_{\lambda,\alpha,\theta}u)_n
    =u_{n+1}+u_{n-1}
    +\lambda\cos(n\alpha+\theta)u_n,
    \quad n\in\bb Z.
\end{equation*}
The parameters $\lambda$, $\alpha$, and $\theta$ are the coupling, frequency,
and phase, respectively.  The critical coupling is $\lambda=2$.
Let $H^+_{\lambda,\alpha,\theta}$ denote the Dirichlet
half-line restriction to $\ell^2(\bb N)$, given by the same formula for
$n\ge1$ with $u_0=0$.

By \cite[Theorem~7.1(a), p.~741 and the almost-periodic case on
p.~743]{Sim07} and \cite[Example~8.3, p.~746]{Sim07}, there is a compact set
$K_{\lambda,\alpha}$,
independent of $\theta$, such that
\begin{equation}\label{amo-half-line-essential-spectrum}
    \sigma(H_{\lambda,\alpha,\theta})
    =\sigma_{\rm ess}(H^+_{\lambda,\alpha,\theta})
    =K_{\lambda,\alpha}.
\end{equation}
At the critical coupling $\lambda=2$, the results in
\cite[Example~8.3, equations~(7.17) and~(8.10), and
Theorem~8.4]{Sim07} give
\begin{equation}\label{critical-amo-properties}
    |K_{2,\alpha}|=0,\qquad
    \ca(K_{2,\alpha})=1,\qquad
    K_{2,\alpha}\text{ is regular}.
\end{equation}
Every $H^+_{2,\alpha,\theta}$ therefore has the hypotheses of
Proposition~\ref{zero-measure-schrodinger-prop}.

\begin{example}\label{zero-measure-fixed-ex}
Fix $\alpha\in\bb R$ with $\alpha/\pi\notin\bb Q$, and set
$K_0=K_{2,\alpha}$.  We realize the patterns $(-,+,-,+,+)$ and
$(-,-,-,+,+)$ with measures whose supports have regular part $K_0$.  By
\eqref{critical-amo-properties}, $K_0$ is regular, has capacity one, and has
Lebesgue measure zero.  In particular, it is not Parreau--Widom
\cite[Proposition, Appendix~A]{Chr12}.
\begin{enumerate}[(a)]
\item Put $H=H_{2,\alpha,-\alpha/2}$ and
$J_\pm=H^+_{2,\alpha,-\alpha/2}
\mathbin{\pm}\langle\,\cdot\,,\delta_1\rangle\delta_1$.
The potential of $H$ is $b_n=2\cos\bigl((n-\tfrac12)\alpha\bigr)$, so
$b_{1-n}=b_n$.  Define $R$ on $\ell^2(\bb Z)$ by $(Ru)_n=u_{1-n}$.  Since
$\|Ru\|^2=\sum_{n\in\bb Z}|u_{1-n}|^2=\|u\|^2$ and $R^2=I$, the operator
$R$ is a surjective isometry and hence unitary.  Moreover,
$R^*=R^{-1}=R$.  Put
$\mathcal E=\ker(R-I)$ and $\mathcal O=\ker(R+I)$.  Every
$u\in\ell^2(\bb Z)$ has the decomposition
$u=(u+Ru)/2+(u-Ru)/2$, whose two terms belong to $\mathcal E$ and
$\mathcal O$, respectively.  These two subspaces are orthogonal and
in particular, the decomposition is unique.  Thus
$\ell^2(\bb Z)=\mathcal E\oplus\mathcal O$.  The identity
$b_{1-n}=b_n$ gives $RH=HR$, so both subspaces reduce $H$.

Define $U_+:\mathcal E\rightarrow\ell^2(\bb N)$ and
$U_-:\mathcal O\rightarrow\ell^2(\bb N)$ by
$(U_\pm u)_n=\sqrt2u_n$ for $n\ge1$.  If $u\in\mathcal E$ or
$u\in\mathcal O$, respectively, then $u_{1-n}=\pm u_n$, and hence
$\|u\|^2=2\sum_{n\ge1}|u_n|^2=\|U_\pm u\|^2$.  Conversely, the inverse is
obtained by setting $u_n=v_n/\sqrt2$ and
$u_{1-n}=\pm v_n/\sqrt2$ for $n\ge1$.  Thus $U_+$ and $U_-$ are unitary.
Fix $s\in\{1,-1\}$.  When $s=1$, take $u\in\mathcal E$, $U_s=U_+$, and
$J_s=J_+$.  When $s=-1$, take $u\in\mathcal O$, $U_s=U_-$, and $J_s=J_-$.
Put $v=U_su$.  Since the chosen subspace reduces $H$, the vector $Hu$
belongs to the domain of $U_s$.  Taking $n=1$ in $u_{1-n}=su_n$ gives
$u_0=su_1$.  Since $(Hu)_1=u_2+u_0+b_1u_1$, it follows that
$(U_sHu)_1=v_2+(b_1+s)v_1$.  For $n\ge2$, both $n-1$ and $n+1$ belong to
$\bb N$, and hence $(U_sHu)_n=v_{n+1}+v_{n-1}+b_nv_n$.  By the definition
of $J_s$, we also have $(J_sv)_1=v_2+(b_1+s)v_1$ and
$(J_sv)_n=v_{n+1}+v_{n-1}+b_nv_n$ for $n\ge2$.  Therefore
$U_sHu=J_sv$.  Consequently, $U=U_+\oplus U_-$ is a unitary map from
$\ell^2(\bb Z)=\mathcal E\oplus\mathcal O$ onto
$\ell^2(\bb N)\oplus\ell^2(\bb N)$, and
\begin{equation*}
    UHU^{-1}=J_+\oplus J_-.
\end{equation*}
The same decomposition is recorded in
\cite[Section~7, (7.1), p.~238]{Zla04Sparse}.  By
\eqref{amo-half-line-essential-spectrum} and invariance of the essential
spectrum under finite-rank perturbations
\cite[Corollary~1.4.25]{DamFilI}, both $J_+$ and $J_-$ have essential
spectrum $K_0$.  Since $H$ has no eigenvalues \cite[Theorem~1.1]{Jit21}, neither
$J_+$ nor $J_-$ has an eigenvalue: otherwise an eigenvector of one summand,
extended by zero in the other, would be an eigenvector of
$J_+\oplus J_-\simeq H$.  If $d\mu_+$ and $d\mu_-$ are
their spectral
measures for $\delta_1$, Proposition~\ref{zero-measure-schrodinger-prop}
therefore gives $\supp(d\mu_+)=\supp(d\mu_-)=K_0$ and
$W_{2,n}(K_0,\mu_+)=W_{2,n}(K_0,\mu_-)=1$.

Both measures are singular with respect to Lebesgue measure.  Since the
off-diagonal coefficients are one, $J_+$ and $J_-$ correspond to the boundary
parameters $\beta=-1$ and $\beta=1$, respectively, in the notation of
\cite[(1.91) and Remark~1.9, p.~16]{Tes00}.  The singular parts for distinct
boundary parameters are mutually singular by
\cite[Lemma~3.11 and its proof, pp.~65--66]{Tes00}.  Therefore
$d\mu_+\perp d\mu_-$.  If both measures
satisfied $(S)$, then $d\mu_{K_0}\ll d\mu_+$ and
$d\mu_{K_0}\ll d\mu_-$, which is impossible.  Choose $s\in\{+,-\}$ so that
$d\mu_s$ fails $(S)$, and put
$J_0=J_s$ and $d\mu_0=d\mu_s$.  Here $X$ is empty, so $(B)$ holds.  Thus
$d\mu_0$ realizes $(P,B,S,U_2,L_2)=(-,+,-,+,+)$.

\item Keep $J_0$ from part~(a), and put
$\beta_0=\max K_0$ and $\varepsilon_j=j^{-2}$.  For $M\ge1$, let
$J_0^{(M)}$ be the Dirichlet restriction of $J_0$ to
$\ell^2(\{M,M+1,\ldots\})$.  By \cite[Lemma~3.7, p.~63]{Tes00},
$\sigma_{\rm ess}(J_0^{(M)})=K_0$, and hence
\cite[Proposition~5.11, (5.4), p.~134]{Luk22} gives
\begin{equation*}
    \sup_{\|u\|=1}\langle J_0^{(M)}u,u\rangle
    =\max\sigma(J_0^{(M)})\ge\beta_0.
\end{equation*}
Since finitely supported unit vectors are dense, choose successively integers
$1\le M_1\le N_1$ and $N_j+1<M_{j+1}\le N_{j+1}$, $j\ge1$, together with
unit vectors $\varphi_j$ such that $\varphi_j(n)=0$ unless
$M_j\le n\le N_j$ and
$\langle J_0\varphi_j,\varphi_j\rangle>
\beta_0-\varepsilon_j/2$.

Set $V=\operatorname{diag}(v_n)$, where $v_n=\varepsilon_j$ on
$\supp(\varphi_j)$ and $v_n=0$ otherwise, and let $J=J_0+V$.  Since
$v_n\rightarrow0$, \cite[Lemma~3.9 and its proof, p.~64]{Tes00} gives
$\sigma_{\rm ess}(J)=K_0$.  The off-diagonal coefficients of $J$ remain equal
to one.  Since $J_0$ is tridiagonal, $(J_0\varphi_j)(n)=0$ unless
$M_j-1\le n\le N_j+1$.  Thus $N_j+1<M_{j+1}$ gives
$\langle J_0\varphi_j,\varphi_i\rangle=0$ for $i\ne j$.  Moreover,
$V\varphi_j=\varepsilon_j\varphi_j$, and the vectors $\varphi_j$ are
orthonormal.  Hence, for $0\ne u=\sum_{j=1}^Nc_j\varphi_j$,
\begin{align*}
    \langle Ju,u\rangle
    &=\sum_{i,j=1}^N
    \langle (J_0+V)c_j\varphi_j,c_i\varphi_i\rangle\\
    &=\sum_{j=1}^N|c_j|^2
    \bigl(\langle J_0\varphi_j,\varphi_j\rangle+\varepsilon_j\bigr)\\
    &>\sum_{j=1}^N|c_j|^2
    \left(\beta_0+\frac{\varepsilon_j}{2}\right)\\
    &\ge\left(\beta_0+\frac{\varepsilon_N}{2}\right)
    \sum_{j=1}^N|c_j|^2\\
    &=\left(\beta_0+\frac{\varepsilon_N}{2}\right)\|u\|^2,
\end{align*}
where the last inequality uses $\varepsilon_j\ge\varepsilon_N$ for
$j\le N$.
For a Borel set $A$, let $P_A(J)$ denote the corresponding spectral
projection.  See \cite[p.~28]{Tes00}.  Lemma~4.6(i) of
\cite[p.~80]{Tes00} gives the inequality below.  Since
$\beta_0+\varepsilon_N/2>\max\sigma_{\rm ess}(J)$, the proof of
\cite[Lemma~9.28(b), pp.~281--282]{Luk22}, applied to $-J$, gives the
equality:
\begin{equation*}
    N\le \dim\operatorname{Ran}
    P_{(\beta_0+\varepsilon_N/2,\infty)}(J)
    =\sum_{\substack{x\in\sigma(J)\\
    x>\beta_0+\varepsilon_N/2}}\dim\ker(J-x).
\end{equation*}
The point spectrum of a half-line Jacobi matrix is simple
\cite[Remark~1.10, p.~18]{Tes00}.  Thus $J$ has at least $N$ distinct
eigenvalues above $\beta_0+\varepsilon_N/2$.  Writing the eigenvalues above
$\beta_0$ as $x_1>x_2>\cdots>\beta_0$, we have
$x_N-\beta_0>1/(2N^2)$ and $x_N\rightarrow\beta_0$.

Let $d\nu$ be the spectral measure of $J$ for $\delta_1$, put
$K=\supp(d\nu)$, and set $X=K\bs K_0$.
Proposition~\ref{zero-measure-schrodinger-prop} gives
$W_{2,n}(K,\nu)=1$.  The set $X$ consists of countably many isolated
eigenvalues and is therefore polar.  Hence $K_0$ is the regular part of $K$
and $g_K=g_{K_0}$.  Since $K_0\subset[\min K_0,\beta_0]$, monotonicity of
Green functions
\cite[Corollary~4.4.5]{Ran95} and \eqref{interval-endpoint-green} give
$g_{K_0}(\beta_0+t)\ge c\sqrt t$ for all sufficiently small $t>0$.
Therefore,
for some $N_0$,
\begin{equation*}
    \BC(K)\ge\sum_{N\ge N_0}g_{K_0}(x_N)
    \ge\frac{c}{\sqrt2}\sum_{N\ge N_0}\frac1N=\infty.
\end{equation*}
Thus $(B)$ fails.  Since $W_{2,n}(K,\nu)=1$, both $(U_2)$ and $(L_2)$ hold,
and Theorem~\ref{szego-implications-thm} shows that $(S)$ fails.  Finally,
$(P)$ fails because $K_0$ is not Parreau--Widom.  Hence
$(P,B,S,U_2,L_2)=(-,-,-,+,+)$.
\end{enumerate}
\end{example}

\subsection{The separation table}

The examples above give the following seventeen distinct patterns.
\begin{equation}\label{split-pattern-table}
\begin{array}{c|c|ccccc}
\text{example} & \text{range of }p &(P)&(B)&(S)&(U_p)&(L_p)\\
\hline
\ref{elementary-patterns-ex}(a)&0<p\le\infty&+&+&+&+&+\\
\ref{elementary-patterns-ex}(b)&0<p\le\infty&+&+&-&+&-\\
\ref{elementary-patterns-ex}(c)&0<p\le\infty&+&-&+&-&+\\
\ref{elementary-patterns-ex}(d)&2\le p\le\infty&-&+&+&-&+\\
\ref{elementary-patterns-ex}(e)&0<p\le\infty&-&-&+&-&+\\
\ref{discrete-patterns-cor}(a)&1<p<\infty&+&-&-&+&-\\
\ref{discrete-patterns-cor}(b)&1<p<\infty&-&+&-&+&-\\
\ref{discrete-patterns-cor}(c)&1<p<\infty&-&-&-&+&-\\
\ref{discrete-patterns-cor}(d)&1<p<\infty&+&-&-&-&-\\
\ref{discrete-patterns-cor}(e)&2\le p<\infty&-&+&-&-&-\\
\ref{discrete-patterns-cor}(f)&1<p<\infty&-&-&-&-&-\\
\ref{ex-B-L2-only}&p=2&-&+&-&-&+\\
\ref{ex-only-L2}&p=2&-&-&-&-&+\\
\ref{jacobi-patterns-ex}(a)&p=2&+&-&-&+&+\\
\ref{jacobi-patterns-ex}(b)&p=2&+&-&-&-&+\\
\ref{zero-measure-fixed-ex}(a)&p=2&-&+&-&+&+\\
\ref{zero-measure-fixed-ex}(b)&p=2&-&-&-&+&+
\end{array}
\end{equation}

Write the signs in the order $(P,B,S,U_p,L_p)$, and let $\ast$ denote
either sign.  Counting each pattern only at the first implication it
violates, the implications $(S)\Rightarrow(L_p)$,
$(S)+(U_p)\Rightarrow(B)$, $(P)+(B)\Rightarrow(U_p)$, and
$(P)+(B)+(L_p)\Rightarrow(S)$ exclude, respectively,
$(\ast,\ast,+,\ast,-)$ (eight patterns),
$(\ast,-,+,+,+)$ (two patterns),
$(+,+,-,-,\ast)$ together with $(+,+,+,-,+)$ (three patterns), and
$(+,+,-,+,+)$ (one pattern).  Thus these implications exclude fourteen
of the thirty-two formal sign patterns, subject, when $p=\infty$, to the
additional hypotheses on the weight in
Theorem~\ref{szego-implications-thm}.  For $2\le p\le\infty$,
Theorem~\ref{unconditional-pw-thm} also excludes
$(-,+,+,+,+)$, leaving seventeen possible patterns.  When $p=2$, all
seventeen occur in \eqref{split-pattern-table}; hence no additional
implication among $(P)$, $(B)$, $(S)$, $(U_2)$, and $(L_2)$ can hold in
general.  For other values of $p$, the table records the ranges covered
by the constructions but is not asserted to be exhaustive.

\bigskip
\noindent\textbf{Acknowledgements.} The authors gratefully acknowledge the American Institute of Mathematics for its support and hospitality during an AIM SQuaRE meeting, which led to the present work.

\end{document}